\newcommand{\bC}{\mathbb{C}}
\newcommand{\bH}{\mathbb{H}}
\newcommand{\bR}{\mathbb{R}}
\newcommand{\bF}{\mathbb{F}}
\newcommand{\bN}{\mathbb{N}}
\newcommand{\bT}{\mathbb{T}}
\newcommand{\cM}{\mathcal{M}}
\newcommand{\cS}{\mathcal{S}}
\newcommand*{\tran}{\mathsf{T}}
\newcommand*{\re}{\text{Re}}
\newcommand*{\Rea}{{\mathfrak R}}
\newcommand*{\im}{\text{Im }}
\newcommand*{\ncS}{\mathrm{ncS}}
\newcommand*{\ncconv}{\mathrm{ncconv}}
\newcommand*{\Epi}{\text{Epi}}
\newtheorem{theorem}{Theorem}[section]
\newtheorem{lemma}[theorem]{Lemma}
\newtheorem{prop}[theorem]{Proposition}
\newtheorem{cor}[theorem]{Corollary}
\newtheorem{thm}[theorem]{Theorem}
\newtheorem*{cor*}{Corollary}
\newtheorem*{thm*}{Theorem}
\newtheorem*{lem*}{Lemma}
\newtheorem*{prop*}{Proposition}
\theoremstyle{definition}
\newtheorem{definition}[theorem]{Definition}
\newtheorem{example}[theorem]{Example}
\newtheorem*{defn*}{Definition}
\theoremstyle{remark}
\title[Real Noncommutative Convexity II]{Real Noncommutative Convexity II:
Extremality and nc convex functions }
\author{David P. Blecher}
\address{Department of Mathematics, University of Houston, Houston, TX 77204-3008.}
\email{dpbleche@central.uh.edu}
\author{Caleb Becker McClure}
\address{Department of Mathematics, University of Houston, Houston, TX 77204-3008.}
\email{cbmcclur@central.uh.edu}
\subjclass[2020]{Primary: 46A55, 46L07, 47A20, 47L07, 26B25; Secondary 46L51, 46L52, 47B92, 47L05, 47L25, 47L30}
\begin{document}

\begin{abstract}  We continue the development of real noncommutative (nc) convexity, building on the recent and profound complex theory of Davidson and Kennedy. The present paper focuses on the theory of nc extreme points (and pure and maximal points) and the nc Choquet boundary in the real setting, as well as on the theory of real nc convex and semicontinuous functions and real nc convex envelopes. Our main emphasis is on how these notions interact with complexification.  In particular, parts of the paper analyze in detail how various notions of “extreme” or “maximal” relate to our earlier concept of the complexification of a convex set. Several new features emerge in the real case, especially in the later sections, including the novel notions of the complexification of a nc convex function and of the complexification of the convex envelope of a nc function.  
With an appendix by T.\ Russell.  
   \end{abstract}
\maketitle
\begin{section}{Introduction}

Very recently, in \cite{BMc}, we developed the real case of the theory of noncommutative (nc) convex sets in the sense of Davidson and Kennedy. In the present work, we continue the development of real nc convexity, guided by the recent and profound complex theory developed by those authors in the memoir \cite{DK}. As before, one of our primary goals is to understand the role and properties of complexification in this setting.
Several motivations for our work are described in \cite{BMc}. One, discussed there in more detail, is that classical convexity is, in many respects, essentially a real theory. Consequently, the real variant of nc convexity is likely to play an important role in future developments in operator algebras, functional analysis, mathematical physics, and quantum analysis
 (see for example \cite{CN} as a representative  example  of what part of this might look like).  The present paper focuses on the real analogues of topics corresponding to the middle sections of the memoir \cite{DK}.  This includes the theory of nc extreme points (and pure and maximal points), Arveson’s nc Choquet boundary in the real case, and the theory of real nc convex and semicontinuous functions and real nc convex envelopes.  In particular we emphasize how all of these interact with complexification. One use of this is as a tool for transferring results between the real and complex theories of nc convexity, often allowing one to circumvent technical difficulties. 
Portions of the paper analyze in detail how various notions of “extreme” interact with our earlier notion of the complexification of a convex set. More sophisticated developments, such as deeper aspects of nc Choquet theory, will be addressed in future work.

Several new features arise in the real case, particularly in the later sections of the paper. These include the novel notions of the complexification of a nc convex function on a nc convex set $K$, and of the complexification of the convex envelope of a nc function on $K$. As  just one example, we show that the convex envelope construction commutes with complexification. The convex envelope of a function $f$ defined on a convex set is of central importance in classical convexity theory, where it is sometimes referred to as the “lower envelope” (see, for example, \cite{LMNS}; the concave or upper envelope may of course be treated via the convex envelope of $-f$.
More generally, we continue to develop foundational structural results for real nc convex sets and the nc functions defined on them.

The real $C^*$-envelope of a real operator system, a.k.a.\ the {\em noncommutative Shilov boundary}, and most of its basic properties, were investigated some time ago (see e.g.\ the references at the start of \cite[Section 5]{BR}).  In early sections of our paper we investigate its construction via the more recent `extremal' and `nc Choquet boundary' approach.  We will see that the complexification of the nc Choquet boundary is incomparably  more complicated than the complexification of the nc Shilov boundary, or of maximal elements or maximal ucp maps. That is, there are significant complications in the relationship between complexification and pure or extreme points of nc convex sets, boundary representations, and related notions. This arises because the complexification of a nc extreme point need not remain nc extreme. That such behavior can occur is not surprising in light of analogous results in the recent paper \cite{RSS}, which treats the different  setting of {\em $C^*-$extreme maps} on real $C^*$-algebras.
By contrast, maximal elements—or maximal unital completely positive maps—behave perfectly under complexification, and these are sufficient to recover the noncommutative Shilov boundary. From the outset, it appeared plausible that the real extremal theory might differ substantially from the complex case, given its close connection to irreducible representations and the fact that irreducible representations of real $C^*$-algebras differ in important ways from their complex counterparts \cite{Li}. Fortunately, we find that no significant difficulties arise in this regard.

  There is of course a large and highly impressive literature (going back thirty years or so)  on noncommutative or quantum versions of extreme points, Krein-Milman, etc, and also on noncommutative versions of convex functions.  As just a few recent examples see for instance the remarkable papers \cite{EHKM, EPS, PaT, Pa} (and the list of papers referred to on p.\ 2 of \cite{BMc}),
and references therein.  (At the time of ArXiV submission of the present paper Scott McCullough also pointed out the very recent
\cite{EP}.) 
Indeed there are even classical precursors, e.g.\ in the operator (or matrix) convex and monotone functions going back to L\"owner.  Moreover this large existing theory of
`operator/matrix convexity' of which we speak,  often focuses on real examples  (e.g.\ spectrahedra), and sometimes is even
completely cast in the real framework, using `real' tools.
 However we believe that there is no technical overlap of our work with this literature, since our goal here is mainly to
 extend the perspective of \cite{DK} to the real case, particularly emphasizing and developing the role of the complexification in this endeavor.
 (In some stretches of Section 2 where the complexification plays little
part we are essentially just verifying and resolving potential complications in,
and otherwise expositing, the real case of [14, Section 6]. Our contributions
in this part of Section 2 are not intended to be novel, but rather to be systematic and useful
 for future ‘real users’.)    Indeed aspects of the  work earlier cited in this paragraph will hopefully interact  in the future very profitably with the matter of the present paper(s), and in any case this cited work  certainly furnishes very many interesting and important real examples.
Every complex nc convex set or complex nc convex function is a real one, but there are many interesting and important 
real examples which are not complex 
ones. 
Also, much of the work cited earlier in this paragraph  is stated in the Wittstock-Effros-Webster-Winkler language \cite{WW} of {\em matrix convexity}.
We refer readers to \cite{DK} for discussion of some of the differences between nc and matrix convexity.

Turning to the structure of our paper, in Section
        \ref{rnc} we begin by characterizing maximal elements
        in a complexification.  This gives a new route to the construction of the $C^*$-envelope of a real operator system (or unital operator space). We also discuss the real case of the theory of Davidson and Kennedy's nc convexity variant of 
        extreme, pure, and maximal points (which where `irreducible' are essentially Arveson's boundary representations).  And of  the associated nc Choquet and Shilov boundary, and its application to the nc Krein-Milman theorem and its converse.          In most of the rest of Section \ref{rnc} we give a few points of clarification about the real case of the arguments, 
        and note  a few places where the arguments are different in the real case.  
        In Section \ref{crco} we
        examine the connections between 
        nc extreme, pure, and maximal points in the real and complex case, that is, how the 
        these notions interact with the complexification of a nc convex set.  One may view some of our work in  this section as complementary  to parts of the very recent paper \cite{RSS}.  In that paper  some analogous  
        results are established for  $C^*$-extreme maps on real $C^*$-algebras.  Every pure  (and in particular any nc-extreme) ucp map  on a $C^*$-algebra is $C^*$-extreme, but not vice versa.  They do not treat general operator systems, thus their results would not apply to points in general nc convex sets. 
        
      In Sections \ref{ncf} and \ref{multi} we consider the real case of Davidson and Kennedy's nc convex functions, including their multivalued nc convex functions, and convex envelopes.   
                 We  give  many foundational structural results here, for example developing the theory of  the complexification in these settings and 
elucidating how it interacts with the basic objects and constructions.  E.g.\ we show  that the convex envelope construction commutes with complexification. As 
in 
\cite{BMc}, 
 the complexification then allows us to derive the real variant of profound and extremely technical results from      \cite[Section 7]{DK} very quickly from the complex case.  
 
 Finally, an Appendix by Travis Russell contains some details on the converse to the Krein-Milman theorem, in the real case.
    
 As usual, many of the results for real   operator systems            here or in \cite{BMc} apply or have variants for the larger class of
 real unital operator spaces: If $V$ is a
 real  unital operator space then $V + V^*, V \cap V^*$, and $\{ x + x^* : x \in V \}$ are well defined real   operator systems \cite{BR,BT}. 

      In keeping with the task and nature of our paper we will expect our readers to be a little familiar with, and 
      reading alongside with parts of, \cite{DK} and our prequels \cite{BMc,BR} (numbers in \cite{DK} refer to numbering in the v4 Arxiv version). 
 Because of this we  do not need to be very pedantic or overly careful with definitions, preliminaries, or the history of the subject, which may usually be found referenced in more detail there. 
        For general background on operator systems and spaces, and in particular on the definitions etc.\ in the rest of this section, we refer the reader to e.g.\ \cite{Pnbook,BLM,DK}, and in the real case to e.g.\ \cite{BReal,BR,BMc}.  
It might also  be helpful to also browse some of the other  existing real operator space theory  e.g.\ \cite{ROnr,RComp,
BT,BCK}.  Some basic 
real $C^*$- and von Neumann algebra theory may be found in \cite{Li}.  

We write $M_n(\bR)$ for the real $n \times n$ matrices, or sometimes simply $M_n$ when the context is clear.  Similarly in the complex case. 
  We write $\Rea \, a$ for $\frac{1}{2}(a + a^*)$, while for 
$z \in M_n(\bC)$ we write $\re \, z$ for $x \in M_n(\bR)$ where $z = x + iy$ for  $y \in M_n(\bR)$. 
We sometimes use the quaternions $\bH$ as an example: this is simultaneously  a real operator system, a real Hilbert space, and a real C$^*$-algebra, usually thought of as a real $*$-subalgebra of $M_4(\bR)$ or $M_2(\bC)$. Its complexification is $M_2(\bC)$.  
 The letters $H, L$ are usually reserved for real or complex Hilbert spaces.  Every complex Hilbert space $H$ is a real Hilbert space, i.e.\ we may forget the complex structure.   More generally we write $X_r$ 
 for a complex Banach space regarded as a real Banach space. 
 We write $X_{\rm sa}$ for the selfadjoint elements  
 in a $*$-vector space $X$.  In the complex case 
 $M_n(X)_{\rm sa} \cong (M_n)_{\rm sa} \otimes X_{\rm sa}$, but this  fails for real spaces. A subspace of $B(H)$ is {\em unital} if contains the identity,   
and a map $T$ is unital if $T(1) = 1$.  Our identities $1$ always have norm $1$.  For $z \in M_n(\bC)$ we write $\re \, z$ for $x \in M_n(\bR)$ where $z = x + iy$ for  $y \in M_n(\bR)$.  For a cardinal $n$ we define the isometry $u_n = \frac{1}{\sqrt{2}} \begin{bmatrix}
    1_n \\ -i \cdot 1_n
\end{bmatrix}$ where $1_n$ is the $n$-dimensional identity operator.  We sometimes often write this as $u$ (suppressing the subscript). 

The categories of real nc convex sets and real operator systems are equivalent  (see \cite[Section 4]{BMc}), 
and we often switch without warning between such sets $K$ and the associated system, e.g.\ viewing 
                $K = \ncS(V)$ for an appropriate real                 operator system $V$. 
 A {\em concrete complex (resp.\ real) operator system} $V$ is a unital selfadjoint subspace of $B(H)$ for $H$ a complex (resp.\ real) Hilbert space. For $n \in \bN$ we have the identification $M_n(B(H)) \cong B(H^{(n)})$ where $H^{(n)}$ is the $n$-fold direct sum of $H$. From this identification, $M_n(V)$ inherits a norm and positive cone. The latter is the set $M_n(V)^+ \coloneq \{x \in M_n(V): x = x^* \geq 0 $ in $ B(H^{(n)})\}$. For  $n \in \bN$ we define the amplification of a linear map $\varphi: V \to W$ by 
        \[\varphi^{(n)}: M_n(V) \to M_n(W)\]
        \[[x_{ij}] \mapsto [\varphi(x_{ij})] . \]
        The natural morphisms between operator systems are {\em  unital completely positive} (ucp) functions, which are linear maps $\varphi: V \to W$ that are unital and every amplification is positive (or equivalently selfadjoint and contractive). The isomorphisms (resp.\ embeddings) of operator systems which are used in this paper are bijective (resp.\ injective) ucp maps whose inverse (resp.\ inverse in its range) is ucp.  These are called unital complete order isomorphisms (resp.\ unital complete order embeddings).

        Similarly, a concrete operator space $E$ is a subspace of $B(H)$ with norms on $M_n(E)$ inherited from $B(H^{(n)})$. There are abstract characterizations of real/complex operator spaces and operator systems which we will not repeat here.
 If $E$ is a real operator system then $E$ can naturally be made into a complex operator system by complexification. The complexification $E_c$ of $E$ is the complex vector space $E_c$ consisting of elements $x+iy$ for $x,y \in E$. We give this a conjugate linear involution $(x+iy)^* = x^* - i y^*$. The matrix ordering $M_n(E_c)^+$ will be defined by
        \[M_n(E_c)^+ = \{x+iy \in M_n(E_c): c(x,y) \geq 0\}\]
        where $$c(x,y) = \begin{bmatrix}
            x & -y\\ y & x
        \end{bmatrix}  \in M_{2n}(E).$$ 
  We also sometimes write $c(x+iy)$ for $c(x,y)$.       We recall that this complexification  is the unique one satisfying Ruan's completely {\em reasonable} condition, namely that the map $\theta_E(x+iy)=x-iy$ is a unital complete order isomorphism.  
  Note that for a real $C^*$-algebra $A$, and in particular for $A = M_m(\bR)$ the map $c : A_c \to M_2(A)$ is a  faithful $*$-homomorphism. 

For a real or complex  operator space $E$ and a possibly infinite cardinal, $M_n(E)$ is the space of matrices whose `finitely supported' submatrices 
 have uniformly bounded norm.     In the case $E = \bF$ write $M_n = M_n(\bF)$. 
Thus  $M_n$ is $B(\ell^2_n)$.  Indeed for every Hilbert space $H$, $B(H) \cong M_n$ $*$-isomorphically for some $n$, 
 after one chooses an orthonormal basis. 
For a real operator space $E$, as in \cite{DK} we  define $\cM(E) = \bigsqcup_n \, M_n(E)$ (for $n$ cardinals bounded by some large enough 
cardinal $\kappa$) with $M_n(E)$ the matrix space of $E$ for $X \subseteq \cM(E)$.  Here $\bigsqcup_n$ is the disjoint union.
        Define $X_n = X \bigcap M_n(E)$. In the case $E = \bR$ write $\cM = \cM(\bR)$. 
          In \cite{BMc} we also defined  the novel notion of the complexification of a nc convex set.  
          (This has already found applications, for example in \cite{BEKMN}.)  
A real nc convex set $K$ is a subset of $\cM(E)$ for a real operator space $E$ that is graded, closed under direct sums, and closed under compressions. The set is compact if each level is compact in the point-weak$^*$ topology. This is the same definition as in the complex case except that direct sums and compressions are done using real matrices. Equivalently, it is a set closed under nc convex combinations. The complexification of $K$, denoted by $K_c$, is the subset of $\cM(E_c)$ 
 consisting of elements $x+iy$ such that $c(x,y) \in K$. 
Such expressions $c(x,y)$ or $c(x+iy)$ occur frequently in our paper(s).  Similarly, for a cardinal $n$ we define as in  \cite{BMc} the isometry $u_n = \frac{1}{\sqrt{2}} \begin{bmatrix} 
    1_n \\ -i \cdot 1_n 
\end{bmatrix}$ where $1_n$ is the $n$-dimensional identity operator.  We sometimes also write this as $u$. 
 In \cite{BMc} it is shown that 
$K_c$ is (the unique, under a very mild hypothesis) complex convex set containing $K$ such that $K$ generates $K_c$.  

        In parts of \cite{BR,BMc} it was checked that many of the basic theorems and constructions for complex operator systems also hold for real operator systems. Very many foundational
structural results for real operator systems were developed, and it was shown  how the complexification interacts
with the basic constructions in the subject.

        We write $V_r$ for a complex operator system regarded as a real one. By a (real) (resp.\ complex)  $C(X)$-space we will mean the continuous real (resp.\ complex)  functions on a compact Hausdorff space $X$.
        Real commutative $C^*$-algebras are not all of this kind, for example $\bC_r$ is not real $*$-isomorphic to a real $C(X)$-space.  Real $C(X)$ spaces may be viewed as the real commutative $C^*$-algebras which have identity (i.e.\ trivial) involution \cite{Good}. 
        
        \begin{section}{Extreme, pure, maximal points, and the nc Choquet boundary}
        \label{rnc}

Given a real nc convex set $K$, we define real maximal, pure and nc extreme points analogously to their complex counterpart.

            \begin{definition} \label{dkmax}
                For $x \in K_m$ and $y \in K_n$ we say that $x$ is a {\em compression} of $y$ and $y$ is a {\em dilation} of $x$ if there exists an isometry $\alpha \in M_{n,m}(\bR)$ such that $x = \alpha^\tran y \alpha$. A dilation $y$ of $x$ is trivial if $y$ decomposes with respect to the range of $\alpha$ as a direct sum of $x$ and some other element in $K$. Equivalently, $y$ commutes with the projection $\alpha \alpha^\tran$. A point $x \in K$ is {\em maximal} if every dilation of $x$ is trivial.
            \end{definition}

            \begin{definition}  \label{dkpure}
                A point $x \in K_n$ is called {\em pure} if when it is written as a finite nc convex combination, $x = \sum_i \alpha_i^\tran x_i \alpha_i$ for $x_i \in K_{n_i}$ and non-zero $\alpha_i \in M_{n_i, n}(\bR)$ such that $\sum_i \alpha_i^\tran  \alpha_i = 1_n$, then $\alpha_i = c_i \beta_i$ where $\beta_i$ is an isometry and $c_i$ is a positive number such that $\beta_i^\tran x_i \beta_i = x$.

                If   $K = \ncS(V)$ for an appropriate real  operator system $V$ as in \cite[Section 4]{BMc}, this corresponds to $\psi \leq \varphi$ for a cp map $\psi$, implying that $\psi$ is a positive scalar multiple of $\varphi$.  Indeed it is obvious
        that such a $\varphi$  is pure in the first sense above (note $x = t_i \alpha_i^\tran x_i \alpha_i$ evaluated at 1 implies $1 = t_i \alpha_i^\tran \alpha_i$, so $\alpha_i$ is a multiple of an isometry).  Conversely, if
        $\varphi = \varphi_1 +  \varphi_2$ for cp maps  $\varphi_i$, use \cite[Lemma 6.3]{BR} to write  
        $\varphi = b_1 \psi_1 b_1 +  b_2 \psi_2 b_2$ for ucp maps  $\psi_i$ and matrices $b_i \geq 0$ with $b_1^2 + b_2^2 = 1$.  If $\varphi$ is pure in the first sense then $\varphi_i = b_i \psi_i b_i$ is a positive multiple of $\varphi$.         This allows us in many situations (such as if the embedding of $K$ is `nc regular', or simply if 
        $K$ lies in a nc hyperplane not containing $0$, as in Remark 6.1.6 in \cite{DK}) 
        to drop the requirement that $\alpha_i$ is a positive scalar multiple of an isometry above, instead saying that 
           $\alpha_i^\tran x_i \alpha_i$ is a positive scalar multiple of $x$.             \end{definition}

            Combining the two definitions gives the real non-commutative analogue of extreme points.

            \begin{definition}  \label{dkext}
                A point $x \in K_n$ is called 
                (nc) {\em extreme} if when it is written as a finite nc convex combination, $x = \sum_i \alpha_i^\tran x_i \alpha_i$ for $x_i \in K_{n_i}$ and non-zero $\alpha_i \in M_{n_i, n}(\bR)$ such that $\sum_i \alpha_i^\tran  \alpha_i = 1_n$, then $\alpha_i = c_i \beta_i$ where $\beta_i$ is an isometry and $c_{i}$ is a positive number such that $\beta_i^\tran x_i \beta_i = x$ and $x_i$ 
                commutes with $\beta_i \beta_i^\tran$ for each $i$ (so that $x_i$ decomposes with respect to the range of $\alpha_i$ as $x_i = y_i \oplus z_i$ with $y_i$ unitarily equivalent to $x$). The set of real extreme points is denoted by $\partial K$ or Ch$(K)$.
            \end{definition}

 We sometimes write the collection of nc extreme points as   Ch$(K)$, since it corresponds to the Choquet boundary, not the Shilov boundary.  The later Corollary \ref{DK621} will say that these are 
 essentially the same as the Arveson {\em boundary 
 representations}.

Suppose that $\varphi : V \to B(H)$ is a unital complete order embedding on a real operator system $V$.  
The real $C^*$-algebra generated by the image of a maximal dilation $j : V \to B(L)$ of $\varphi$ is (a copy of) the important $C^*$-{\em envelope} $C^*_{\rm min}(V)$, by the  well known arguments for this from \cite{DM,ArvNote}.  That is, it has the universal property that for any unital complete order embedding $\kappa : V \to B(L)$ there is a unique 
$*$-homomorphism $\pi : C^*(\kappa(V)) \to C^*(j(V))$
with $\pi \circ \kappa = j$. The arguments (due to Dritschel-McCullough, Arveson,  and others, see e.g.\ the historical notes to \cite[Section 4.1]{BLM}) for the next result  work the same in both the real and complex case.  As explained and proved in several  places, e.g.\ 
in \cite[Proposition 2.2]{ArvNote} or \cite[4.1.11-4.1.13]{BLM}, we have:

\begin{thm} \label{dmar}   A ucp map $u : V \to B(H)$ is
maximal if and only if it has the {\rm unique extension property} (UEP).  Namely for every (or for some) $C^*$-algebra $B$ generated by a unital completely order isomorphic copy of $V$,
there is a unique ucp extension $u : B \to B(H)$, and this extension is a $*$-homomorphism. 
If this holds then it holds for $B$ equal to  $C^*_{\rm min}(V)$ or $C(K) = C^*_{\rm max}(V)$, indeed any ucp extension $\tilde{u} : B \to B(H)$ of $u$ factors through $C^*_{\rm min}(V)$. 
\end{thm}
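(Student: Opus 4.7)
The plan is to adapt the standard Dritschel--McCullough--Arveson argument (as in \cite[Proposition 2.2]{ArvNote} and \cite[4.1.11--4.1.13]{BLM}) to the real setting; the real content is just to confirm that the three tools used---real Stinespring dilation, real Arveson extension, and the Kadison--Schwarz inequality for ucp maps on real $C^*$-algebras---all hold, and each is documented in \cite{BR, BMc, Li}. The equivalence is split into two directions, and the ``for every/for some'' quantifier switch together with the factoring through $C^*_{\min}(V)$ follow from the universal property recalled just above the theorem.

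For \emph{maximal $\Rightarrow$ UEP for every such $B$}: given a ucp extension $\tilde{u} : B \to B(H)$, the plan is to apply real Stinespring to write $\tilde{u}(b) = W^* \pi(b) W$ for a $*$-representation $\pi : B \to B(L)$ and an isometry $W : H \to L$. Then $\pi|_V$ is a dilation of $u$ via $W$, so maximality of $u$ forces the projection $WW^*$ to commute with $\pi(V)$, and hence with $\pi(B) = \pi(C^*(V))$. A short computation shows that multiplicativity of $\pi$ and the commutation of $WW^*$ with $\pi(B)$ make $\tilde{u}$ multiplicative, i.e.\ a $*$-homomorphism. Uniqueness is immediate: any two ucp extensions are then $*$-homomorphisms agreeing on the generating set $V$.

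For \emph{UEP for some $B$ $\Rightarrow$ maximal}: given a dilation $v : V \to B(L)$ realized by an isometry $\alpha : H \to L$ with $u(a) = \alpha^* v(a) \alpha$, the plan is to first extend $v$ to a ucp map $\tilde{v} : B \to B(L)$ using real Arveson extension. Then $u'(b) := \alpha^* \tilde{v}(b) \alpha$ is a ucp extension of $u$ to $B$, so by UEP it is a $*$-homomorphism. The equality case of the Kadison--Schwarz inequality $u'(b^*b) \geq u'(b)^* u'(b)$ then forces the projection $\alpha \alpha^*$ to commute with $\tilde{v}(b)$ for all $b \in B$; specializing to $b \in V$ gives that the original dilation $v$ of $u$ is trivial.

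The remaining statements---equivalence of ``for some'' and ``for every'' $B$, and factoring of every ucp extension through $C^*_{\min}(V)$---follow by combining these two directions with the universal property of $C^*_{\min}(V)$: let $u_0 : C^*_{\min}(V) \to B(H)$ be the (unique, $*$-homomorphic) extension coming from the UEP applied to $C^*_{\min}(V)$, and let $\pi : B \to C^*_{\min}(V)$ be the canonical surjective $*$-homomorphism; then both $\tilde{u}$ and $u_0 \circ \pi$ are ucp extensions of $u$ to $B$ and must coincide. The only anticipated obstacle is bookkeeping the real-case validity of Stinespring, Arveson extension, and Kadison--Schwarz, which the prequels \cite{BR, BMc} and \cite{Li} already handle; no genuinely new difficulty arises beyond checking that $\pi(B) = \pi(C^*(V))$ is generated by $\pi(V)$ in the real setting (a formality since $V$ generates $B$ as a real $C^*$-algebra).
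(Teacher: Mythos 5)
Your proposal is correct and follows essentially the same route the paper takes: the paper gives no new proof here but simply cites the standard Dritschel--McCullough--Arveson argument (\cite[Proposition 2.2]{ArvNote}, \cite[4.1.11--4.1.13]{BLM}) and asserts it carries over verbatim to the real case, and what you have written out is exactly that argument (Stinespring plus the commutant of a selfadjoint generating set for one direction; Arveson extension plus the Schwarz-inequality/multiplicative-domain equality case for the other), together with the correct observation that the only real-case checks needed are the real Stinespring, Arveson extension, and Kadison--Schwarz facts already established in \cite{BR,Li}.
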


 \begin{lemma} \label{rhom}  If $\pi : A \to B(H)$ is a unital $*$-homomorphism on a real $C^*$-algebra, then 
 $\pi$ is an extreme point in ${\rm UCP}(A,B(H))$.            \end{lemma}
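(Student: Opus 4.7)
The plan is to verify the usual criterion for classical extremality in ${\rm UCP}(A,B(H))$: if $\pi = \tfrac{1}{2}(\varphi_1+\varphi_2)$ for two ucp maps $\varphi_1,\varphi_2 : A \to B(H)$, show $\varphi_1=\varphi_2=\pi$. The central tool is the Kadison--Schwarz inequality $\varphi_i(a^*a) \geq \varphi_i(a)^*\varphi_i(a)$ for $a \in A$, which for any ucp map is extracted in the standard way by applying $\varphi_i^{(2)}$ to the manifestly positive rank-one matrix $\bigl[\begin{smallmatrix} 1 & a \\ a^* & a^*a \end{smallmatrix}\bigr] = \bigl[\begin{smallmatrix}1\\ a^*\end{smallmatrix}\bigr]\bigl[\begin{smallmatrix}1 & a\end{smallmatrix}\bigr] \geq 0$ and using the standard 2-positive estimate on the resulting $2\times 2$ matrix in $B(H)$.

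Next I would use that $\pi$ is a unital $*$-homomorphism, so Schwarz is an equality for $\pi$: $\pi(a^*a) = \pi(a)^*\pi(a)$. Writing $A_1 = \varphi_1(a)$ and $A_2 = \varphi_2(a)$, the identity $\pi = \tfrac{1}{2}(\varphi_1+\varphi_2)$ gives $\pi(a^*a) = \tfrac{1}{2}(\varphi_1(a^*a)+\varphi_2(a^*a))$ and $\pi(a)^*\pi(a) = \tfrac{1}{4}(A_1+A_2)^*(A_1+A_2)$. Setting these equal and using $\varphi_i(a^*a) \geq A_i^*A_i$ yields, after expansion, $A_1^*A_1 + A_2^*A_2 \leq A_1^*A_2 + A_2^*A_1$, i.e.\ $(A_1 - A_2)^*(A_1 - A_2) \leq 0$. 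Hence $\varphi_1(a)=\varphi_2(a)$ for every $a \in A$, so $\varphi_1=\varphi_2=\pi$, as required.

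The only place where one could worry about the real case is in verifying the Schwarz inequality, but the $2\times 2$ matrix argument just indicated is purely formal and goes through verbatim for ucp maps on real $C^*$-algebras (the positive cone in $M_2(B(H))$ is still defined by $x^*x$-combinations). As a cross-check, one could alternatively invoke a real version of Arveson's Radon--Nikodym theorem: any cp $\varphi \leq 2\pi$ on a real $C^*$-algebra with $\pi$ a $*$-representation has the form $\varphi(\cdot) = T^{1/2}\pi(\cdot)T^{1/2}$ for some $T \in \pi(A)'$ with $0 \leq T \leq 2$; unitality of $\varphi$ then forces $T=1$ and hence $\varphi=\pi$. I expect the direct Schwarz approach to be cleaner and most in keeping with the paper's style, and I anticipate no real obstacle in the argument.
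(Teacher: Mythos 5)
Your proof is correct, but it takes a genuinely different route from the paper. The paper disposes of the lemma in two lines by complexification: from $\pi = \tfrac12(\varphi+\psi)$ one gets $\pi_c = \tfrac12(\varphi_c+\psi_c)$ with $\pi_c$ a unital $*$-homomorphism on $A_c$ and $\varphi_c,\psi_c$ ucp, so St{\o}rmer's complex result gives $\varphi_c=\pi_c$ and hence $\varphi=\pi$. You instead reprove the result from scratch via the Kadison--Schwarz inequality: equality of Schwarz for the multiplicative $\pi$ combined with $\varphi_i(a^*a)\geq \varphi_i(a)^*\varphi_i(a)$ forces $(\varphi_1(a)-\varphi_2(a))^*(\varphi_1(a)-\varphi_2(a))\leq 0$, hence $\varphi_1=\varphi_2$. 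This is essentially St{\o}rmer's original argument transplanted to the real setting, and every step you use survives there: the $2\times 2$ matrix trick for Schwarz only needs $2$-positivity, the Cholesky-type conjugation giving $C\geq B^*B$ from $\bigl[\begin{smallmatrix}1 & B\\ B^* & C\end{smallmatrix}\bigr]\geq 0$ is purely algebraic, and the C$^*$-identity $\|T^*T\|=\|T\|^2$ holds in real C$^*$-algebras so $(A_1-A_2)^*(A_1-A_2)=0$ does force $A_1=A_2$. What your approach buys is self-containedness---no appeal to the complex theorem or to the facts that $\varphi\mapsto\varphi_c$ preserves ucp maps and $*$-homomorphisms; what the paper's approach buys is brevity and thematic consistency, since transferring real statements to the complex case via complexification is the organizing principle of the whole paper. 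Your cross-check via a real Radon--Nikodym theorem is plausible but unnecessary, and would itself require verification in the real case, so the direct Schwarz argument is the right one to foreground.
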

    
                \begin{proof}  This is a well known result due to St{\o}rmer for complex $C^*$-algebras, and the real case follows immediately.   If $\pi =  \frac{1}{2} (\varphi  + \psi)$ where $\varphi, \psi \in
                {\rm UCP}(A,B(H))$ then $\pi_c = \frac{1}{2} (\varphi_c + \psi_c)$.  Thus $\varphi_c = \pi_c$ and $\varphi = \pi$. (See also \cite[Corollary 3.10]{RSS}
                for a complementary result.)
\end{proof}

We next prove a useful characterization of the maximal elements of a complexification.  
 We recall the fact from the first three lines of  the last paragraph of  the proof of \cite[Theorem 3.4]{BMc} that an element $x \in K_{2n}$ of a real nc convex set $K$ is of the form $c(a,b)$ for $a,b\in K_n$ if and only if for the unitary matrix $W = \begin{bmatrix}
     0 & -1 \\ 1 & 0
 \end{bmatrix}$ we have $W^*xW = x$. In this case, $c(u^*xu) = x$. Real nc affine maps preserve such elements.  Indeed if $\omega$ is nc affine then $W^*\omega(x)W = \omega(W^*xW) = \omega(x)$ if $x = c(a,b)$.  Hence also $c(u^*\omega(x)u) = \omega(x)$.  We use this in the proof below. 

\begin{thm} \label{cmax} For $K$ a real compact nc convex set, an element   $x \in K_c$ is maximal in $K_c$
 if and only if   $c(x)$ is maximal in $K$.            \end{thm}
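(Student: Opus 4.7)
The plan is to exploit that $c$ is a faithful $*$-homomorphism on matrix algebras in order to move between real and complex dilations. Two preliminary facts (direct matrix-block computations) will be used repeatedly: for $\beta \in M_{m,n}(\bC)$ and $w \in M_m(E_c)$ one has $c(\beta^* w \beta) = c(\beta)^\tran c(w) c(\beta)$, and $c$ sends isometries to isometries since $c(\beta^*\beta) = c(1_n) = 1_{2n}$.

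For the easy direction, assume $c(x)$ is maximal in $K$. Given any complex dilation $w \in (K_c)_m$ of $x$ with complex isometry $\beta \in M_{m,n}(\bC)$, the element $c(w) \in K_{2m}$ is a real dilation of $c(x)$ via the real isometry $c(\beta) \in M_{2m,2n}(\bR)$. Maximality of $c(x)$ gives $c(w)\,c(\beta\beta^*) = c(\beta\beta^*)\,c(w)$, that is, $c(w\beta\beta^* - \beta\beta^* w) = 0$; injectivity of $c$ forces $w\beta\beta^* = \beta\beta^* w$, so the dilation of $x$ is trivial and $x$ is maximal in $K_c$.

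For the harder direction, assume $x$ is maximal in $K_c$ and take any real dilation $z \in K_N$ of $c(x) \in K_{2n}$ with real isometry $\alpha \in M_{N,2n}(\bR)$. The key observation is that $z$, reinterpreted as an element of $M_N(E_c)$ via $E \subset E_c$, already lies in $(K_c)_N$, since $c(z) = z \oplus z \in K$. Setting $\beta = \alpha u_n \in M_{N,n}(\bC)$ and using $u_n^* u_n = 1_n$ and the identity $u_n^* c(x) u_n = x$, one checks $\beta^* \beta = 1_n$ and $\beta^* z \beta = u_n^* \alpha^\tran z \alpha u_n = u_n^* c(x) u_n = x$, so $z$ is a complex dilation of $x$ in $K_c$. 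By maximality, $z$ commutes with $\beta\beta^* \in M_N(\bC)$ inside $M_N(E_c)$. A short calculation shows $u_n u_n^* = \tfrac{1}{2}(1_{2n} - iW)$, where $W$ is the unitary block matrix from the discussion preceding the theorem, whence $\beta\beta^* = \tfrac{1}{2}(\alpha\alpha^\tran - i\,\alpha W \alpha^\tran)$. Since $z$ has entries in $E$, separating the real and imaginary parts of the commutation relation in $M_N(E_c)$ yields $z(\alpha\alpha^\tran) = (\alpha\alpha^\tran)z$, which is precisely the triviality of $z$ as a real dilation of $c(x)$.

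The principal obstacle is this second direction: from an arbitrary real dilation of $c(x)$ one must produce a complex dilation of $x$ whose scalar projection $\beta\beta^*$ has the original projection $\alpha\alpha^\tran$ (up to a positive scalar) as its real part. The device of reinterpreting $z$ itself as an element of $K_c$, and letting the $u_n$-formula do the bookkeeping, resolves this cleanly; the imaginary part of the commutation relation yields the extra identity $z(\alpha W \alpha^\tran) = (\alpha W \alpha^\tran) z$ as a bonus.
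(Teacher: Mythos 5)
Your proof is correct, but the harder direction takes a genuinely different (and more elementary) route than the paper's. For ``$c(x)$ maximal in $K$ implies $x$ maximal in $K_c$'' you essentially reproduce the paper's argument: apply $c$ to a complex dilation, invoke real maximality, and pull the commutation relation back (the paper compresses by $u_n$ where you invoke injectivity and modularity of $c$; these are interchangeable). For the converse, however, the paper works at the level of ucp maps and the unique extension property: it extends $\psi = c_n\circ\varphi|_V$ to a $C^*$-algebra, symmetrizes the extension via $\tilde\rho = \tfrac12(\rho + W^*\rho W)$, complexifies, compresses by $u$ to land on an extension of $\varphi$, invokes the UEP, and finally uses the fact that $*$-homomorphisms are extreme in ${\rm UCP}(A,B(H))$ to conclude $\tilde\rho=\rho$. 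You instead stay entirely inside the nc convex set: given a real dilation $z=\alpha$-dilation of $c(x)$, you observe $z+i0\in (K_c)_N$, manufacture the complex isometry $\beta=\alpha u_n$ so that $\beta^*z\beta = u_n^*c(x)u_n = x$, apply complex maximality, and then split $\beta\beta^* = \tfrac12(\alpha\alpha^\tran - i\,\alpha W\alpha^\tran)$ into real and imaginary parts, using that $z$ has entries in $E$ so that the commutation relation in $M_N(E_c)$ decouples. This is exactly the style of computation the paper uses later in Lemma \ref{pem} (and alludes to in the proof of Corollary \ref{maxiff} when it says ``a direct proof of this result is possible''), so your argument is consistent with the authors' own toolkit. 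What your route buys is brevity and independence from Theorem \ref{dmar} and Lemma \ref{rhom}; what the paper's route buys is the explicit identification of $c(x)$ with a ucp map having the UEP, which feeds directly into the subsequent construction of the real $C^*$-envelope.
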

    
                \begin{proof}  Suppose that $x \in K_c$ is maximal in $K_c$.  We may view $x = f + i g \in (K_c)_n$ as a (complex) maximal ucp map  $\varphi : V_c \to M_n(\bC)$ for an appropriate real operator system $V$, with $f,g \in {\rm UCP}(V, M_n(\bR))$.                 Recall that 
$\ncS(V_c) \cong \ncS(V)_c$ and $V_c = A(K_c)$ as in \cite[Section 3]{BMc}.  Thus $\varphi$ has the UEP by Theorem \ref{dmar}.
We may identify $c(x)$ or $c(\varphi)$ with the map 
$\psi : V \to M_{2n}(\bR)$ with $$\psi = c_n \circ   \varphi_{|V} : V \to M_{2n}(\bR),$$ where we write $c : M_n(\bC) \to M_{2n}(\bR)$ as $c_n$. That is, $\psi(v) = c(f(v),g(v))$.

We write $C^*(V)$ for a $C^*$-algebra generated by a  completely order isomorphic copy of $V$. Then $C^*(V)_c$
is a $C^*$-algebra generated by a  completely order isomorphic copy of $V_c$.
Suppose that 
$\psi$ has a ucp extension  $\rho : C^*(V) \to M_{2n}(\bR)$.   
Set $$\tilde{\rho} = \frac{1}{2} (\rho  + W^* \rho  W) : C^*(V) \to M_{2n}(\bR), \; \; \; \; \;  \; \;  W = \begin{bmatrix}
       0  & - I \\
       I & 0
    \end{bmatrix}.$$ Using the facts before the theorem and that $\psi(v) = c(\varphi(v))$ we see that this is also a ucp extension  of $\psi$.  Then $\tilde{\rho}_c : 
    C^*(V)_c 
    \to M_{2n}(\bC)$ is ucp.
Thus $u^* \tilde{\rho}_c(\cdot) u : C^*(V)_c \to M_{n}(\bC)$ is ucp, and extends $\varphi$.
Indeed writing $c : M_n(\bC) \to M_{2n}(\bR)$ as $c_n$, we have $$u^* \tilde{\rho}_c(v) u = u^* \tilde{\rho}(v) u = u^* \psi(v) u = u^* c_n(\varphi(v)) u = \varphi(v), \qquad v \in V.$$
Thus $u^* \tilde{\rho}_c(\cdot) u =  \varphi$ on $V_c$ by $\bC$-linearity. 
                Since $\varphi$ has the UEP we deduce that $u^* \tilde{\rho}_c(\cdot) u$ is a $*$-homomorphism  extending $\varphi$.  Thus 
                $$c_n \circ (u^* \tilde{\rho}_c(\cdot) u)_{|C^*_{\rm min}(V)} = c_n \circ (u^* \tilde{\rho}(\cdot) u) =  \tilde{\rho}$$  is a $*$-homomorphism   
                extending $\psi$. The second equality follows from the facts before the theorem.
                              By Lemma \ref{rhom} we have  $\tilde{\rho} = \rho$, and this is a $*$-homomorphism   
                extending $\psi$.   Thus $\psi = c(\varphi)$ has the UEP, and $c(x)$ is maximal in $K$. 
                
                For the converse direction, suppose that $c(x) \in K_{2n}$ is maximal, and that $x = \beta^* y \beta \in (K_c)_n$. Taking $c$ of both sides gives $c(x) = c(\beta)^\tran c(y) c(\beta)$ and therefore $c(\beta) c(\beta)^\tran c(y) = c(y) c(\beta) c(\beta)^\tran$. Adjoining $u_n$ to both sides gives that $\beta \beta^*$ commutes with $y$. 
                \end{proof} 
                
  \begin{cor} \label{maxiff}
  For $K$ a real compact nc convex set, if
$x \in K$  
then $x$ is maximal if and only if $x+i0 \in K_c$ is complex maximal.
                 \end{cor}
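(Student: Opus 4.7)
The plan is to derive the corollary directly from Theorem \ref{cmax}. For $x \in K_n$, setting the imaginary part to zero gives
\[ c(x+i0) = c(x,0) = \begin{bmatrix} x & 0 \\ 0 & x \end{bmatrix}, \]
which is simply the direct sum $x \oplus x \in K_{2n}$. Theorem \ref{cmax} therefore reduces the corollary to the purely real statement: \emph{$x$ is maximal in $K$ if and only if $x \oplus x$ is maximal in $K$.}

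For the forward direction I would take any dilation $y \in K_m$ of $x \oplus x$, given by an isometry $\beta \in M_{m,2n}(\bR)$ with $\beta^\tran y \beta = x \oplus x$, and split $\beta = [\beta_1 \; \beta_2]$ into $m \times n$ blocks. The isometry condition forces each $\beta_i$ to be an isometry with range orthogonal to that of the other, and the diagonal blocks of $\beta^\tran y \beta$ give $\beta_i^\tran y \beta_i = x$; so each $\beta_i$ exhibits $y$ as a dilation of $x$. Maximality of $x$ then makes each $\beta_i \beta_i^\tran$ commute with $y$, so $\beta \beta^\tran = \beta_1 \beta_1^\tran + \beta_2 \beta_2^\tran$ commutes with $y$, making the dilation trivial.

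For the converse I would take a dilation $y \in K_m$ of $x$ via an isometry $\alpha \in M_{m,n}(\bR)$ and form the dilation $y \oplus x \in K_{m+n}$ of $x \oplus x$ via the isometry $\alpha \oplus 1_n$. Maximality of $x \oplus x$ then forces $\alpha \alpha^\tran \oplus 1_n$ to commute with $y \oplus x$, and in particular $\alpha \alpha^\tran$ commutes with $y$, so the dilation of $x$ is trivial.

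I do not anticipate any genuine obstacle: the whole argument is a short bookkeeping exercise once Theorem \ref{cmax} is available, with the only real content being the elementary observation that maximality is preserved by (and reflected through) finite direct sums. If desired one could bypass the direct-sum bookkeeping by working instead through the UEP characterization in Theorem \ref{dmar}: any ucp extension of $\varphi \oplus \varphi$ has diagonal blocks equal to the unique $*$-homomorphic extension of $\varphi$, and the Schwarz inequality annihilates the off-diagonal blocks.
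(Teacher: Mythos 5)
Your proposal is correct and takes essentially the same route as the paper: the paper also deduces the corollary from Theorem \ref{cmax} by noting that $c(x+i0)=x\oplus x$, reducing everything to the equivalence of maximality of $x$ and of $x\oplus x$, which it leaves as a ``pleasant exercise.'' Your block-decomposition argument is a correct and complete write-up of that exercise.
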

                 
            \begin{proof}  A direct proof of this result is possible (e.g.\ this is similar to the proof of a special case of Theorem \ref{pem} in an earlier ArXiV draft).             Instead we deduce it swiftly from Theorem \ref{cmax}. By that result $x+i0 \in K_c$ is complex maximal if and only if
            $x \oplus x$ is real maximal in $K$.   However it is a pleasant exercise that 
            this is equivalent to $x$ being maximal.
            \end{proof}

            Thus maximal elements or maximal ucp maps behave perfectly in regard to complexification.   (Confirming this, we shall show at the end of the section that if $K$ a complex compact nc convex set and 
$x \in K$, 
then $x$ is real maximal  in $K$ if and only if $x$ is complex maximal  in $K$.  This is not true for extremal elements.)

One may obtain the following Dritschel-McCullough theorem
in the real case by essentially  the same proof as in the complex case,  e.g.\ as it is presented in Arveson's account \cite{ArvNote} (an ordinal argument involving direct limits of Hilbert spaces, which includes a contribution by N. Ozawa).  
This was sketched in a previous version of our paper, in the Appendix by Travis Russell. We removed this since
it is essentially identical to Arveson's account, and since the latter is now published in \cite{Dav}. 
Instead, or alternatively, and for its independent interest, we will deduce it instantaneously from the complex case by complexification, as follows:

\begin{thm} \label{rdm} Every element $x$ of a 
real compact nc convex set $K$ has a  maximal dilation in $K$.  Equivalently, every ucp map $\varphi: V \to B(H)$ can be dilated to a ucp map $\pi: V \to B(L)$ which is maximal/has the UEP.
\end{thm}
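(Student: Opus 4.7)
The plan is to deduce the real Dritschel--McCullough theorem directly from its complex counterpart applied to the complexification $K_c$, then use Theorem \ref{cmax} to transport the resulting maximal dilation back to the real setting. The key observation making this work is that $c : M_n(\bC) \to M_{2n}(\bR)$ is a $*$-representation, so it takes complex isometries to real isometries and, more generally, complex compressions to real compressions.

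Concretely, given $x \in K_n$ I would first view $x + i 0 \in (K_c)_n$. Applying the complex case of Dritschel--McCullough to the complex compact nc convex set $K_c$ (which is known and is sketched in the Appendix by Russell), I obtain a maximal element $y \in (K_c)_m$ dilating $x + i 0$, so that $x + i 0 = \alpha^* y \alpha$ for some complex isometry $\alpha : \bC^n \to \bC^m$. Next, I would apply $c$ to both sides of this identity. Since $c$ is multiplicative and adjoint-preserving, and $\alpha^* \alpha = I_n$ forces $c(\alpha)^\tran c(\alpha) = I_{2n}$, we obtain
\[ x \oplus x \;=\; c(x + i 0) \;=\; c(\alpha)^\tran \, c(y) \, c(\alpha), \]
exhibiting $c(y) \in K_{2m}$ as a real dilation of $x \oplus x$. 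Since $x$ is trivially a compression of $x \oplus x$ via the real isometry $[I_n \; 0]^\tran$, composing the two real isometries shows that $c(y)$ is a real dilation of $x$. By Theorem \ref{cmax}, the maximality of $y$ in $K_c$ implies that $c(y)$ is maximal in $K$, so $c(y)$ is the desired maximal dilation of $x$ in $K$.

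For the ucp formulation, I would appeal to the categorical equivalence between real compact nc convex sets and real operator systems from \cite[Section 4]{BMc}, together with Theorem \ref{dmar} which identifies maximal ucp maps with those having the UEP: the ucp map $\varphi : V \to B(H)$ corresponds to a point of $\ncS(V)$, and the maximal dilation constructed above translates into a maximal ucp dilation $\pi : V \to B(L)$, which by Theorem \ref{dmar} has the UEP.

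The genuine technical content of the theorem has been packaged into the complex case and into Theorem \ref{cmax}, so there is really no obstacle once those are in hand; the only step requiring care is the routine bookkeeping that $c$ preserves the algebraic structure (products, adjoints, isometries) needed to convert a complex compression into a real one.
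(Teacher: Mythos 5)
Your proposal is correct and follows essentially the same route as the paper's own proof: dilate $x+i0$ to a complex maximal $y$ in $K_c$, apply $c$ to get $x \oplus x = c(\alpha)^\tran c(y) c(\alpha)$, compose with the isometry $[I_n \; 0]^\tran$ to exhibit $c(y)$ as a real dilation of $x$, and invoke Theorem \ref{cmax} for maximality of $c(y)$ in $K$. No substantive differences.
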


\begin{proof}  Let $x \in K_n$ for a 
real compact nc convex set.
Then $x + i0 \in (K_c)_n$, so by the complex case of the present theorem it has a complex maximal dilation 
$x = \alpha^* y \alpha$, for an isometry $\alpha \in M_{m,n}(\bC)$ and a complex maximal  element   $y \in M_m(K_c)$.
Then $$x \oplus x = c(x) = c(\alpha^* y \alpha) = c(\alpha)^\tran c(y) c(\alpha) .$$ 
If $v^\tran =  \begin{bmatrix}
           I & 0 
       \end{bmatrix}$, then $w = c(\alpha) v$ is an isometry, and $x =w^\tran c(y) w$. Since  
       $c(y)$  is maximal in $K$ by Theorem \ref{cmax}, we are done.    \end{proof} 

This gives a (new) route to the construction of the $C^*$-envelope of a real operator system (or unital operator space).
Namely the real case of the Dritschel-McCullough-Arveson route (see \cite{DM,ArvNote} or \cite[Theorem 14.9.12]{Dav}, or as mentioned before
Theorem  \ref{dmar} or after Theorem  \ref{DK624}),  but using the  `real arguments' above. 
For example, if we take a  maximal dilation $\rho$ of the inclusion $V \subset B(H)$, using our swift 
Theorem \ref{rdm}, 
then the $C^*$-algebra generated by $\rho(V)$ is the real $C^*$-envelope.

 As in \cite{DK} we define  the {\em barycenter} of a ucp map $\mu : C(K) \to M_n$ to be $x \in K$ if $\mu_{|A(K)}$ equals evaluation at $x$.  We say $x$ has a {\em unique representing map} if $\delta_x$ is the unique such ucp $\mu$.   
 Since  $C(K) = C^*_{\max}(A(K))$,  (the real case of) Propositions 5.2.3 and 5.2.4 in \cite{DK} are also a rephrasing in the language above of parts of 
Theorem \ref{dmar}, and would need little thought  for those familiar with the basics here (and the proofs in \cite{DK} work in the real case). Thus:

\begin{cor} \label{sofoll} A point in a compact real nc convex set $K$ has a unique representing map if and only if it is maximal. \end{cor}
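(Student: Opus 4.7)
The plan is to read this off directly from Theorem \ref{dmar} applied with $B = C(K) = C^*_{\max}(A(K))$. A representing map for $x \in K_n$ is by definition a ucp map $\mu : C(K) \to M_n$ with $\mu_{|A(K)} = x$, i.e., a ucp extension of $x$ (viewed as a ucp map on $V = A(K)$) to $C(K)$. Thus ``$x$ has a unique representing map'' translates exactly to ``$x$ has a unique ucp extension to $C(K)$'', which is the uniqueness clause of the UEP with respect to this choice of $B$. The forward implication is then immediate: if $x$ is maximal, Theorem \ref{dmar} gives the UEP, hence a unique ucp extension to $C(K)$, and $\delta_x$ is therefore the unique representing map.

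For the converse, suppose $x$ has a unique representing map $\mu$. I would invoke Theorem \ref{rdm} to obtain a maximal dilation $\pi : A(K) \to B(L)$ of $x$, with $H = \bR^n$ isometrically embedded in $L$ via $\alpha$ so that $x = \alpha^* \pi(\cdot) \alpha$. By Theorem \ref{dmar} applied to $\pi$, there is a $*$-homomorphism extension $\tilde\pi : C(K) \to B(L)$. The compression $\alpha^* \tilde\pi(\cdot) \alpha$ is then a ucp extension of $x$ to $C(K)$, and by the uniqueness of the representing map must equal $\mu$.

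The main obstacle is concluding from this that $x$ itself is maximal, rather than merely being a compression of a maximal element. I would argue via the Schwarz inequality applied to the $*$-homomorphism $\tilde\pi$: using the identity $\mu(f^*f) = \alpha^* \tilde\pi(f)^* \tilde\pi(f) \alpha$ together with the uniqueness of the ucp extension (applied to ucp extensions produced by Arveson-extending an arbitrary dilation of $x$ and then compressing), one forces equality in the Schwarz inequality, which places $\alpha\alpha^*$ in the multiplicative domain of $\tilde\pi$. Equivalently, $\alpha\alpha^*$ commutes with $\tilde\pi(A(K)) = \pi(A(K))$, so that $\pi$ is a trivial dilation of $x$ and $x$ inherits maximality from $\pi$. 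Alternatively, as the authors indicate, one can transcribe the proofs of \cite[Propositions 5.2.3 and 5.2.4]{DK} into the real setting with no essential changes, since the ingredients (Arveson extension, Stinespring dilation, and the Schwarz inequality for ucp maps) all have the straightforward real analogues developed earlier in this paper and in \cite{BMc,BR}.
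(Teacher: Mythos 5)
Your overall route is the intended one: the corollary is, as the paper indicates, a rephrasing of Theorem \ref{dmar} for $B = C(K) = C^*_{\max}(A(K))$, and your forward direction is exactly right. The converse, however, has a gap at the decisive step. You write that ``the uniqueness of the ucp extension \dots forces equality in the Schwarz inequality,'' but uniqueness alone does not do this: it only tells you that every compression $\alpha^\tran\tilde\pi(\cdot)\alpha$ of an extended dilation coincides with the single representing map $\mu$; it says nothing about $\mu$ being multiplicative, and without multiplicativity there is no equality in Schwarz and no multiplicative-domain conclusion. The missing ingredient is the observation that the evaluation map $\delta_x : C(K) \to M_n$, $f \mapsto f(x)$, is always a representing map of $x$ and is a $*$-homomorphism --- this is exactly where $C(K) = C^*_{\max}(A(K))$ enters, and it is built into the paper's definition of ``unique representing map,'' which requires the unique $\mu$ to be $\delta_x$. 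Once you know $\mu = \delta_x$ is multiplicative, your computation $\mu(f)^*\mu(f) = \alpha^\tran\tilde\pi(f)^*\alpha\alpha^\tran\tilde\pi(f)\alpha \le \alpha^\tran\tilde\pi(f^*f)\alpha = \mu(f^*f)$ collapses to an equality, giving $(1-\alpha\alpha^\tran)\tilde\pi(f)\alpha = 0$ for all $f$, hence $\alpha\alpha^\tran$ commutes with $\tilde\pi(C(K))$, and the rest of your argument (triviality of the dilation, plus the fact that a summand of a maximal point is maximal) closes correctly.

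Two smaller remarks. With $\delta_x$ in hand the detour through Theorem \ref{rdm} is unnecessary: for an arbitrary dilation $y \in K_m$ of $x$ with $x = \beta^\tran y \beta$, the map $\beta^\tran \delta_y(\cdot)\beta$ is already a representing map of $x$ (no Arveson extension or maximal dilation is needed, since $\delta_y$ is itself a $*$-homomorphic ucp extension of $y$), and the same multiplicative-domain argument shows $y$ commutes with $\beta\beta^\tran$, so every dilation of $x$ is trivial. Finally, your closing sentence --- transcribing \cite[Propositions 5.2.3 and 5.2.4]{DK} into the real case --- is precisely what the paper does; it offers no independent argument, so the expansion you attempted is the content a reader would have to supply anyway.
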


We say that $x \in K_n$ is {\em irreducible}
 if its (selfadjoint) image in $M_n$ (that is, $\{ f(x) : f \in A(K) \}$)  is irreducible in the sense that every element in $(M_n)_{\rm sa}$  commuting with $x$ 
 is  a  scalar multiple of $I_n$.
This is equivalent to $\delta_x$ being an  irreducible $*$-representation. 
Every pure $x \in K$ is irreducible, as in the complex case \cite{Arv}. 
If $K$ is a real nc convex set we say that $x \in K_n$ is {\em complex-irreducible} if its (selfadjoint) image  $\{ f(x) : f \in A_{\bR}(K) \}$ is irreducible as a subset of 
 $M_n(\bC)$.

As shown in Examples such as  \ref{Cal} below, a point $x \in K$ for real nc convex $K$ may be pure but $x \in K_c$ may not be. Similarly, by the statement after Lemma \ref{pekkcc}, a point $x \in K_c$ may be extreme while $c(x)$ is not.  Thus we cannot prove the Davidson-Kennedy theorem (see Theorem 6.2.2 in \cite{DK} or 14.10.8 in \cite{Dav})  via complexification, nor can we use complexification to deduce the existence of extreme points. Therefore, because pure (i.e.\ irreducible) points do not complexify well, we cannot do a proof of the real nc Krein-Milman theorem analogous to how we did Theorem \ref{rdm}. Thus, we now spend time to prove the real nc Krein-Milman theorem directly.

The discussion in 5.1.2 in \cite{DK} is the same in the real case.  Turning to Section 6 of \cite{DK},  Remark 6.1.3 in \cite{DK} and the following propositions have the same proof as in the complex case (see Proposition $6.1.4$ and $6.1.5$ in \cite{DK}).

            \begin{prop} \label{dk614} 
                For $K$ a real nc convex set, $x \in K$ is nc extreme if and only if it is pure and maximal.
            \end{prop}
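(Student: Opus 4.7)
The plan is to derive both implications essentially tautologically from Definitions \ref{dkmax}, \ref{dkpure}, and \ref{dkext}, so the real case goes through in the same way as the complex case \cite[Prop.~6.1.4]{DK}. The real/complex distinction plays no role here, since neither definition makes any essential use of the scalar field.

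For the forward direction, assume $x \in K_n$ is extreme. Purity is immediate, because Definition \ref{dkext} contains every requirement of Definition \ref{dkpure}. For maximality, let $y \in K_m$ be a dilation of $x$ via an isometry $\alpha \in M_{m,n}(\bR)$, so that $x = \alpha^\tran y \alpha$ and $\alpha^\tran \alpha = 1_n$. View this as a one-term nc convex combination. Applying the extreme condition (with $\alpha_1 = \alpha$, $c_1 = 1$, $\beta_1 = \alpha$) forces $y$ to commute with $\alpha \alpha^\tran$, which is precisely the triviality of the dilation.

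For the converse, suppose $x$ is pure and maximal, and consider an arbitrary nc convex combination $x = \sum_i \alpha_i^\tran x_i \alpha_i$ as in Definition \ref{dkext}. Purity produces the decomposition $\alpha_i = c_i \beta_i$ with $\beta_i$ an isometry, $c_i > 0$, and $\beta_i^\tran x_i \beta_i = x$. Thus each $x_i$ is a dilation of $x$ along the isometry $\beta_i$, and maximality of $x$ ensures every such dilation is trivial, so $x_i$ commutes with $\beta_i \beta_i^\tran$. This is the only remaining requirement in Definition \ref{dkext}, so $x$ is extreme.

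There is no real obstacle here; the only mild observation is that one must interpret a single-term dilation $x = \alpha^\tran y \alpha$ as a (degenerate) nc convex combination in order to apply the extremality condition in the first direction, and recognize in the second direction that purity reduces the general convex combination precisely to the dilation situation controlled by maximality.
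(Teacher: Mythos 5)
Your proof is correct and is essentially the argument of \cite[Proposition 6.1.4]{DK}, which the paper simply cites as carrying over verbatim to the real case: the forward direction treats a dilation as a one-term nc convex combination, and the converse uses purity to reduce a general combination to isometric dilations handled by maximality. Nothing further is needed.
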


            \begin{prop}\label{dk615} 
                For $K,L$ real nc convex sets and $\theta: K \to L$ an affine nc homeomorphism, $\theta$ maps maximal (resp. pure or extreme) points in $K$ to maximal (resp. pure or extreme) points in $L$.
            \end{prop}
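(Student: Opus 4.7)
The plan is to exploit that an nc affine map preserves nc convex combinations, and in particular preserves compressions by isometries, direct sums, and unitary equivalences (each of these is a special case of an nc convex combination). Since $\theta$ is an nc homeomorphism, its inverse $\theta^{-1}$ is also nc affine, so these structural properties can be transported across $\theta$ in both directions. Bijectivity lets us write any element of $L$ as $\theta(z)$ for a unique $z \in K$, which is the mechanism for pulling decompositions in $L$ back to decompositions in $K$.

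For maximality, I would suppose $\theta(x) = \alpha^\tran y \alpha$ is a dilation in $L$, with $\alpha$ an isometry and $y = \theta(z) \in L_m$. Applying $\theta^{-1}$ and using nc affinity gives $x = \alpha^\tran z \alpha$, so by maximality of $x$ the element $z$ commutes with $\alpha \alpha^\tran$; equivalently $z$ decomposes as $x \oplus w$ with respect to the range of $\alpha$. Applying $\theta$ and using preservation of direct sums then yields $y = \theta(x) \oplus \theta(w)$, so $y$ commutes with $\alpha \alpha^\tran$ and is a trivial dilation of $\theta(x)$. An analogous argument handles purity: if $\theta(x) = \sum_i \alpha_i^\tran y_i \alpha_i$ is a finite nc convex combination with $\sum_i \alpha_i^\tran \alpha_i = 1_n$, then writing $y_i = \theta(z_i)$ and pulling back via $\theta^{-1}$ produces $x = \sum_i \alpha_i^\tran z_i \alpha_i$ in $K$. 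Purity of $x$ gives isometries $\beta_i$ and positive scalars $c_i$ with $\alpha_i = c_i \beta_i$ and $\beta_i^\tran z_i \beta_i = x$; applying $\theta$ then gives $\beta_i^\tran y_i \beta_i = \theta(x)$, so $\theta(x)$ is pure.

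For the extreme case, one may simply combine the previous two via Proposition \ref{dk614}. Alternatively, proceed directly: in the purity-style decomposition above, extremality of $x$ further yields that $z_i$ commutes with $\beta_i \beta_i^\tran$, which transfers to $y_i$ by the same direct-sum preservation argument used for maximality, and the unitary equivalence of the summand with $x$ transfers to $\theta(x)$ because $\theta$ respects unitary conjugation. I do not anticipate a substantive obstacle; the argument is essentially formal bookkeeping, using that the matrix coefficients $\alpha_i, \beta_i, c_i$ are scalar/real matrix data unaffected by $\theta$, and that $\theta$ together with $\theta^{-1}$ faithfully transports compressions, direct sums, and unitary equivalences between $K$ and $L$.
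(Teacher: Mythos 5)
Your proposal is correct and is essentially the paper's proof: the paper simply notes that Proposition \ref{dk615} "has the same proof as in the complex case" (Proposition 6.1.5 of \cite{DK}), and that proof is exactly the transport argument you give, pulling decompositions back through the nc affine inverse $\theta^{-1}$ and pushing the resulting direct-sum/isometry data forward through $\theta$.
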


 Item (1) in the following is also essentially mentioned in \cite{RSS}.

\begin{prop} \label{DK617} Let $\varphi : A \to M_n$  be a  ucp map on a real $C^*$-algebra   $A$.
\begin{enumerate} 
            \item [{\rm (1)}] $\varphi$ is pure if and only if it is a compression of an irreducible representation of $A$. If $A$  is a real $C(X)$ space  this forces the map to be a state.
 \item [{\rm (2)}] $\varphi$ is maximal (resp.\ nc extreme)  if and only if it is a $*$-representation (resp.\  irreducible  $*$-representation) of $A$. 
 \end{enumerate} 
\end{prop}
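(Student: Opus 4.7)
The plan is to handle the maximal claim in (2) directly from Theorem \ref{dmar}, then prove (1) via the real Stinespring theorem (available from \cite{BR}, or obtainable from the complex case by complexification), and finally deduce the extreme claim in (2) by combining these with Proposition \ref{dk614}.

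For the maximal claim in (2), Theorem \ref{dmar} says $\varphi$ is maximal iff it has the UEP for some $C^*$-algebra $B$ generated by a unital completely order isomorphic copy of $A$. Taking $B = A$ itself, the unique ucp ``extension'' of $\varphi$ from $A$ to $A$ is trivially $\varphi$, and the UEP demand that this extension be a $*$-homomorphism reduces exactly to $\varphi$ itself being a $*$-representation.

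For (1), I would write $\varphi = V^* \pi V$ as the minimal real Stinespring dilation, with $\pi: A \to B(K)$ a $*$-representation, $V: H \to K$ isometric, and $\pi(A) V H$ dense in $K$. The real case of Arveson's Radon-Nikodym theorem then places cp maps $\psi$ with $\psi \leq \varphi$ in bijection with selfadjoint operators $0 \leq T \leq I$ in $\pi(A)'$ via $\psi(\cdot) = V^* T \pi(\cdot) V$. By the characterization of purity explained after Definition \ref{dkpure}, $\varphi$ is pure iff every such $\psi$ is a positive scalar multiple of $\varphi$, iff the selfadjoint part of $\pi(A)'$ is $\bR I_K$, iff $\pi$ is irreducible in the paper's sense. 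Thus pure ucp maps are exactly compressions of irreducible $*$-representations. When $A$ is a real $C(X)$ space, Gelfand theory together with the identity involution forces all characters to be real-valued, so every irreducible $*$-representation is $1$-dimensional; an isometry $V: H \to \bR$ then forces $H \cong \bR$, making $\varphi$ itself a state.

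For the extreme part of (2), Proposition \ref{dk614} gives extreme iff pure and maximal, which by the above says $\varphi = V^* \sigma V$ is simultaneously a $*$-representation and a compression of an irreducible $*$-rep $\sigma$. The multiplicativity identity $V^* \sigma(ab) V = V^* \sigma(a) V V^* \sigma(b) V$, applied with $b = a^*$, forces $(I - VV^*)\sigma(a^*) V = 0$ for all $a$, i.e., the range of $V$ is $\sigma$-invariant. Irreducibility of $\sigma$ then gives $VV^* = I$, so $V$ is unitary and $\varphi \cong \sigma$ is irreducible. Conversely an irreducible $*$-rep is a trivial self-compression, hence pure by (1) and maximal by the first half of (2), hence extreme. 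The main obstacle is verifying real Stinespring and the real Arveson Radon-Nikodym, since commutants of real irreducible reps can be $\bR$, $\bC$, or $\bH$ as Schur division algebras; but the paper's definition of irreducibility (selfadjoint commutant equals $\bR I$) holds in all three cases, so the arguments go through unchanged.
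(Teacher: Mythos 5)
Your proposal is correct and follows essentially the same route as the paper, which simply cites Arveson's Radon--Nikodym/minimal Stinespring argument (\cite[Corollary 1.4.3]{Arv}) for (1) and \cite[Example 6.1.8]{DK} (UEP with $B=A$ for maximality, plus Proposition \ref{dk614}) for (2); you have merely written out the details those references contain. Your real-case caveats are also the right ones: the paper likewise reduces irreducibility to $(\pi(A)')_{\rm sa}=\bR I$ via the fact that projections norm-densely span the selfadjoint part of a real von Neumann algebra, which covers the $\bR$, $\bC$, $\bH$ commutant cases and yields the $C(X)$ assertion exactly as you argue.
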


\begin{proof}  (1)  This is essentially \cite[Corollary 1.4.3]{Arv}.
The proof in the real case requires a couple of  additional observations.  E.g.\ we need to add to Arveson's proof the fact about real irreps from \cite[Proposition 5.3.7 (1)]{Li}.  Indeed the proof of \cite[Proposition 5.3.7 (1) and Proposition 4.3.4 (3) and (4)]{Li} shows that  $\varphi(V)'$ is a real von Neumann algebra whose projections norm densely span its selfadjoint part.  So $\varphi(V)$ is irreducible if and only there are no nontrivial such projections, or equivalently if $(\varphi(V)')_{\rm sa} = \bR \, I$.
        We also note that in Arveson's proof(s) in \cite[Section 1.4]{Arv} we may assume that the operators $T_i$ there are selfadjoint, and where he says that $\pi(B)'$ is scalar, we use $(\pi(B)')_{\rm sa}$ is scalar.
        For ucp maps Arveson's usage of `pure' in \cite{Arv} is equivalent to  Definition \ref{dkpure}. 
        
        (2)\ Follows as in Example 6.1.8  in \cite{DK}. \end{proof} 

See \cite{RSS} for several interesting results on pure and $C^*$-extreme maps in the real case.  E.g.\ Corollary 4.4 there extends the last assertion of (1) above.

The proof of the following extension of Corollary \ref{sofoll} is the same in the real case (see \cite[Theorem 6.1.9]{DK}).

\begin{theorem}\label{DK619}  
    For $K$ compact real nc convex, a point $x \in K_n$ is nc extreme if and only if the representation $\delta_x: C(K) \to M_n$ is both irreducible and is the unique representing map of $x$.
\end{theorem}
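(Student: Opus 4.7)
The plan is to combine Proposition \ref{dk614} (``extreme'' $=$ ``pure'' $+$ ``maximal''), Corollary \ref{sofoll} (maximal iff unique representing map), and Proposition \ref{DK617}(2) (a ucp map on a real $C^*$-algebra is nc extreme iff it is an irreducible $*$-representation), using Arveson-style extension from $A(K)$ to $C(K) = C^*_{\max}(A(K))$ as the bridge between nc convex decompositions of $x$ in $K$ and nc convex decompositions of $\delta_x$ in $\mathrm{UCP}(C(K), M_n)$.

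For the forward direction, suppose $x \in K_n$ is extreme. By Proposition \ref{dk614}, $x$ is both pure and maximal. Maximality together with Corollary \ref{sofoll} gives that $\delta_x$ is the unique representing map of $x$, and Theorem \ref{dmar} ensures this unique extension is in fact a $*$-representation. For the irreducibility of $\delta_x$, I would invoke the statement (made just before Proposition \ref{dk614}) that every pure point is irreducible, together with the observation that the selfadjoint commutant of $\delta_x(C(K))$ equals the selfadjoint commutant of $x(A(K))$, since $\delta_x(C(K))$ is precisely the real $C^*$-algebra generated by $x(A(K))$.

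For the converse, suppose $\delta_x$ is both irreducible and the unique representing map of $x$. Uniqueness immediately yields maximality of $x$ by Corollary \ref{sofoll}. Applying Proposition \ref{DK617}(2) to the irreducible $*$-representation $\delta_x : C(K) \to M_n$ shows that $\delta_x$ is nc extreme as a ucp map on the real $C^*$-algebra $C(K)$. To push this extremity back down to $x$, I would take any nc convex decomposition $x = \sum_i \alpha_i^\tran x_i \alpha_i$ in $K$, extend each $x_i : A(K) \to M_{n_i}$ to a ucp $\tilde{x}_i : C(K) \to M_{n_i}$ by the real Arveson extension theorem, and observe that $\sum_i \alpha_i^\tran \tilde{x}_i \alpha_i$ is itself a ucp extension of $x$ and hence equals $\delta_x$ by uniqueness. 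Feeding this decomposition into the nc extremity hypothesis on $\delta_x$ produces isometries $\beta_i$ and positive scalars $c_i$ with $\alpha_i = c_i \beta_i$, $\beta_i^\tran \tilde{x}_i \beta_i = \delta_x$, and $\tilde{x}_i$ commuting with $\beta_i \beta_i^\tran$; restricting everything back to $A(K)$ yields the analogous identities for $x_i$, which is exactly the content of $x$ being extreme in $K$.

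The principal technical point is the bookkeeping in the converse: the freedom in choosing extensions $\tilde{x}_i$ is precisely what the uniqueness of $\delta_x$ collapses, and this is what allows the nc extremity condition on $\delta_x$ to transfer cleanly, decomposition by decomposition, to the corresponding condition on the $x_i$. Beyond this, the proof is essentially a direct invocation of the results already assembled earlier in this section, and so should not differ in any substantive way from the complex case of \cite[Theorem 6.1.9]{DK}.
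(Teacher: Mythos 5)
Your argument is correct and is essentially the reconstruction of Davidson--Kennedy's proof of \cite[Theorem 6.1.9]{DK} that the paper itself defers to: forward direction via ``extreme $=$ pure $+$ maximal'' together with Corollary \ref{sofoll} and the fact that pure points are irreducible, and converse by lifting an nc convex decomposition of $x$ to one of $\delta_x$ via (real) Arveson extension plus uniqueness of the representing map, then invoking extremality of the irreducible representation $\delta_x$ in $\ncS(C(K))$. The only ingredient worth making explicit is that the real Arveson extension theorem (injectivity of $M_{n_i}(\bR)$) is available from \cite{BR}, which the paper uses implicitly throughout.
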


As
in \cite{DK} the next result 
follows directly from the last theorem. 

Arveson's {\em boundary representations} of an operator system are (in both real and complex case) the 
irreducible $*$-representations on a Hilbert space $H$ of a $C^*$-algebra $B$ generated by a unital completely order isomorphic copy of $V$, which extend a maximal ucp $\varphi : V \to B(H)$. 

\begin{cor}\label{DK621}
    For a real operator system $V$ with $K = \ncS(V)$, the nc extreme points of $K$ are precisely the restrictions to $V$ of boundary representations of $V$.  
\end{cor}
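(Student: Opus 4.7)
\textbf{The plan} is to read off the result as a direct consequence of Theorem \ref{DK619} together with the unique extension property of Theorem \ref{dmar}. For $x \in K_n$, I would view $x$ as a ucp map $\varphi : V \to M_n(\bR)$, fix a $C^*$-algebra $B$ generated by a unital completely order isomorphic copy of $V$, and invoke the universal property of $C(K) = C^*_{\max}(V)$ to obtain a surjective $*$-homomorphism $q : C(K) \to B$ that restricts to the identification on $V$.

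Suppose first that $x$ is extreme. By Theorem \ref{DK619} the unique representing map $\delta_x : C(K) \to M_n$ is an irreducible $*$-representation, and by Proposition \ref{dk614} combined with Theorem \ref{dmar} the map $\varphi$ is maximal and hence has the UEP. Let $\pi : B \to M_n$ be the unique ucp extension of $\varphi$ to $B$, which is automatically a $*$-representation. Then $\pi \circ q : C(K) \to M_n$ is also a ucp extension of $\varphi$, so the UEP forces $\pi \circ q = \delta_x$. Since $q$ is surjective, $\pi$ and $\delta_x$ have the same image in $M_n$, and irreducibility of $\delta_x$ therefore passes to $\pi$. Thus $\pi$ is an irreducible $*$-representation of $B$ extending the maximal ucp $\varphi$, i.e., $\varphi$ is the restriction to $V$ of the boundary representation $\pi$.

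For the converse, suppose $\varphi$ is the restriction to $V$ of a boundary representation $\pi : B \to M_n$, so $\pi$ is irreducible and $\varphi$ is maximal by definition. The same diagram chase gives $\pi \circ q = \delta_x$ via the UEP of $\varphi$, so $\delta_x$ has the same image in $M_n$ as $\pi$ and is therefore irreducible; it is also the unique ucp extension of $\varphi$ to $C(K)$. Theorem \ref{DK619} then yields that $x$ is extreme.

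All of the real-case ingredients needed here have already been arranged in the paper --- Theorem \ref{dmar}, Theorem \ref{DK619}, Lemma \ref{rhom}, and the irreducibility framework of Proposition \ref{DK617} --- so the only substantive step is the diagram-chase identification of the image of $\delta_x$ with that of $\pi$. I do not expect any genuine obstacle; the corollary is a formal consequence of the deeper Theorem \ref{DK619}.
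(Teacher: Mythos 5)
Your proposal is correct and follows the same route as the paper, which simply asserts that the corollary follows directly from Theorem \ref{DK619}; your diagram chase through $q : C(K) \to B$ and the UEP is exactly the routine verification being elided. No gaps.
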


Thus maximal irreducible ucp maps $V \to B(H)$,
        or equivalently maximal irreducible elements in $K$, are essentially the `same thing' as Arveson's boundary representations. 

   For a $C^*$-algebra $B$ generated by a unital completely order isomorphic copy of $V$ we recall that the {\em Shilov boundary ideal} is largest closed ideal $J$ of $B$ such that the 
   quotient map $B \to B/J$ is completely isometric on $V$.  (The quotient by the Shilov boundary ideal is a copy of the $C^*$-envelope.) Arveson showed in the complex case \cite[Section 3]{Arv} that the Shilov boundary ideal equals  
   the intersection of the kernels of all boundary representations.  The same proof works in the real case.  His proof uses an `invariance principle' which was simplified in 
   \cite[Proposition 3.1]{ArvNote}.  It also follows from e.g.\ the observation above \cite[Proposition 4.1.12]{BLM}, which is the same in the real case. 
   
    The proof of the following result from \cite{DK0} is also  the same in the real case. 

\begin{thm} \label{DK024} Let $V$ be a real operator system and $K$ its associated real nc convex set.
    Every real pure ucp map $\varphi: V \to B(H)$ can be dilated to a nc extreme ucp map $\widetilde{\varphi}: \cS \to B(L)$.  Every real pure in $K$ has an extreme dilation. 
    \end{thm}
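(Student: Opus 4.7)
My plan is to adapt the proof from \cite{DK0} to the real setting, using the real analogues of the key ingredients established earlier in this section. Given a pure ucp map $\varphi: V \to B(H)$, the strategy is to produce a maximal dilation via Theorem \ref{rdm}, extend it to a $*$-representation of a $C^*$-cover via Theorem \ref{dmar}, and then use purity of $\varphi$ together with Arveson's Radon-Nikodym machinery for completely positive maps to extract an irreducible subrepresentation whose restriction to $V$ provides the desired extreme dilation.

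Explicitly, I would first apply Theorem \ref{rdm} to dilate $\varphi$ to a maximal ucp map $\pi: V \to B(L)$; by Theorem \ref{dmar}, this $\pi$ extends uniquely to a $*$-representation $\hat\pi: C^*_{\min}(V) \to B(L)$, and $\varphi = P \hat\pi(\cdot) P |_V$ for the projection $P: L \to H$. Second, I would locate a reducing subspace $L_0 \subseteq L$ of $\hat\pi$ such that $\pi_0 := \hat\pi|_{L_0}$ is irreducible and $\varphi$ remains a compression of $\pi_0|_V$. Third, I would verify that $\pi_0|_V$ is the desired extreme dilation: it is irreducible and so pure, and it inherits the UEP from $\pi$, because any ucp extension $\rho: C^*_{\min}(V) \to B(L_0)$ of $\pi_0|_V$, direct-summed with $\hat\pi|_{L_0^\perp}$, yields a ucp extension of $\pi = \hat\pi|_V$ on $C^*_{\min}(V)$; UEP of $\pi$ forces this extension to equal $\hat\pi$, hence $\rho = \pi_0$. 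Applying Proposition \ref{dk614} (extreme $=$ pure $+$ maximal) then delivers the extreme dilation $\widetilde\varphi := \pi_0|_V$.

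The main obstacle is the extraction of the irreducible reducing subspace $L_0$. In Arveson's complex proof this is effected via his Radon-Nikodym technique for cp maps, where selfadjoint elements of the commutant $\hat\pi(C^*_{\min}(V))'$ parameterize cp dominations of $\varphi$, and purity forces these parameters to be scalar, so that cutting down to an irreducible piece preserves the compression relation. In the real setting one replaces the commutant by its selfadjoint part and uses the real irreducibility criterion $(\hat\pi(V)')_{\rm sa} = \bR \cdot I$ from \cite[Proposition 5.3.7 (1)]{Li}, exactly as already employed in the proof of Proposition \ref{DK617}(1). With this substitution Arveson's argument from \cite[Section 1.4]{Arv} transfers without essential change. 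All other steps in the plan invoke only the real versions of the dilation and uniqueness theorems that are already in place, so no further new ingredient is needed.
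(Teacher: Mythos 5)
Your proposal does not follow the paper's route (the paper simply observes that the Davidson--Kennedy argument from \cite{DK0} carries over verbatim to the real case: one dilates the pure map one step at a time, using the fact that extreme points of the compact convex set of dilations of a pure ucp map to $H\oplus\bR$ are again pure, and a transfinite induction then yields a \emph{pure} maximal dilation, which is extreme by Proposition \ref{dk614}). More importantly, your route has a genuine gap at exactly the step you flag as ``the main obstacle.'' Arveson's Radon--Nikodym machinery in \cite[Section 1.4]{Arv} applies to completely positive maps on a $C^*$-\emph{algebra}: for the minimal Stinespring dilation of such a map, the cp maps it dominates are parameterized by positive contractions in the commutant, and purity \emph{of the map on the $C^*$-algebra} forces the commutant to be trivial. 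Here $\varphi$ is only pure on the operator system $V$. The map you would feed into that machinery is $\Phi = P\hat\pi(\cdot)P$ on $C^*_{\rm min}(V)$, and purity of $\varphi=\Phi|_V$ does not imply purity of $\Phi$. Concretely, for a positive contraction $T\in\hat\pi(C^*_{\rm min}(V))'$, purity of $\varphi$ only tells you that the cp map $P\hat\pi(\cdot)TP|_V$ is a scalar multiple $t_T\varphi$, i.e.\ that $PTP=t_T 1_H$; this constrains $T$ only on the subspace $H$ and does not force $T$ to be scalar on $L$. So the commutant need not be trivial, need not contain a minimal projection visible to $H$, and no irreducible reducing subspace $L_0$ with $\varphi$ still a compression of $\hat\pi|_{L_0}$ is guaranteed to exist by this argument.

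This is not a technicality: the failure of precisely this step is why Arveson's own proof of the existence of boundary representations in \cite{ArvCh1} had to pass through direct integral decompositions and was restricted to separable operator systems, and why Davidson and Kennedy introduced the ``dilate while preserving purity'' argument of \cite{DK0} in the first place. Starting from an \emph{arbitrary} maximal dilation supplied by Theorem \ref{rdm} and trying to cut down afterwards is the wrong order of operations; purity must be propagated through the dilation process. Your remaining steps (the real irreducibility criterion $(\hat\pi(V)')_{\rm sa}=\bR\, I$ from \cite[Proposition 5.3.7 (1)]{Li}, the direct-sum argument showing a subrepresentation inherits the UEP, and the final appeal to Proposition \ref{dk614}) are all fine, but they sit downstream of a step that does not go through.
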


    \begin{thm} \label{DK025} Let $V$ be a real operator system and $K$ its associated real nc convex set.
    If  $x \in M_n(V)$ for $n < \infty$ then there is an $m$ and a nc extreme ucp map $\varphi: V \to M_m(\bR)$ with $\| \varphi_n (x ) \| = \| x \|$. For each $\epsilon > 0$ there is also a pure matrix state $\varphi: V \to M_k(\bR)$ for $k < \infty$ with $\| \varphi_n (x ) \| > \| x \| - \epsilon$. \end{thm}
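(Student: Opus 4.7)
The plan is to pass to the real $C^*$-envelope $B = C^*_{\rm env}(V)$, obtain an irreducible real $*$-representation of $B$ whose $n$-fold amplification attains $\|x\|$, and then either restrict to $V$ (first assertion) or compress to a finite-dimensional subspace (second assertion).

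For the first assertion, the unital complete order embedding $V \hookrightarrow B$ gives $\|x\|_{M_n(V)} = \|x^*x\|_{M_n(B)}^{1/2}$. A Krein-Milman argument applied to the (compact, convex) face of real states of $M_n(B)$ sending $x^*x$ to $\|x^*x\|_{M_n(B)}$ produces a pure state $\rho$ of $M_n(B)$ with $\rho(x^*x) = \|x^*x\|_{M_n(B)}$. Real GNS then supplies an irreducible real $*$-representation $\sigma: M_n(B) \to B(K_\rho)$ with cyclic vector $\eta_\rho$ satisfying $\|\sigma(x)\eta_\rho\| = \|x\|$. The real Morita-type decomposition of irreducible representations of $M_n(B)$ (with care for the three possible Schur commutants $\bR I, \bC I, \bH I$ permitted by the real Schur lemma) identifies $\sigma$ with $\pi^{(n)}$ for an irreducible real $*$-representation $\pi: B \to B(H)$, giving $\|\pi_n(x)\| = \|x\|$. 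By Proposition \ref{DK617}(2) the $*$-representation $\pi$ is maximal, so by Corollary \ref{DK621} the restriction $\varphi := \pi|_V$ is extreme in $K$. Taking $m = \dim_\bR H$ as a cardinal, we identify $B(H)$ with $M_m(\bR)$ in the paper's conventions, and $\varphi: V \to M_m(\bR)$ is the desired extreme ucp map.

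For the second assertion, given $\epsilon > 0$, choose a unit vector $\xi \in H^{(n)}$ with $\|\pi_n(x)\xi\| > \|x\| - \epsilon$. Let $H_0 \subseteq H$ be the finite-dimensional real subspace spanned by the $n$ components of $\xi$ together with the $n$ components of $\pi_n(x)\xi$, and let $P$ be the orthogonal projection of $H$ onto $H_0$. Since the $n$-fold amplification of $P$ fixes both $\xi$ and $\pi_n(x)\xi$, the compression $\varphi := (P\pi(\cdot)P)|_V : V \to B(H_0) = M_k(\bR)$, with $k = \dim_\bR H_0 < \infty$, satisfies $\|\varphi_n(x)\| > \|x\| - \epsilon$. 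Its canonical $*$-extension $P\pi(\cdot)P : B \to M_k(\bR)$ is a compression of the irreducible representation $\pi$, hence pure on $B$ by Proposition \ref{DK617}(1). Since $\pi$ is a boundary representation of $V$ (being irreducible and maximal), this exhibits $\varphi$ as a compression of a boundary representation, and hence as a pure matrix state on $V$.

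The main obstacle is the real Morita-type identification in the first step: the real Schur lemma permits three distinct commutants ($\bR I, \bC I, \bH I$) for irreducible real $*$-representations, so the passage from an irreducible representation of $M_n(B)$ back to one of $B$ itself demands the same care exhibited in the proof of Proposition \ref{DK617}(1), importing the structure theorem for real irreducibles from \cite[Proposition 5.3.7]{Li}. The descent of purity from $B$ down to $V$ in the second step, although also a real-case-specific point, is routine once the first step is secured.
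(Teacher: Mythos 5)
There is a genuine gap, and it occurs at the same point in both halves of your argument: you establish maximality/purity of maps \emph{on the $C^*$-algebra} $B=C^*_{\rm env}(V)$, but the theorem asserts extremality/purity \emph{in $\ncS(V)$}, i.e.\ for ucp maps on the operator system $V$, and these do not transfer. In the first half, Proposition \ref{DK617}(2) tells you that the irreducible representation $\pi$ is maximal as an element of $\ncS(B)$; to invoke Corollary \ref{DK621} you need $\pi$ to be a \emph{boundary representation}, i.e.\ you need the restriction $\pi|_V$ to be maximal in $\ncS(V)$, which by Theorem \ref{dmar} amounts to $\pi|_V$ having the unique extension property. An arbitrary irreducible representation of the $C^*$-envelope need not have this: the paper's own corollary after Example \ref{marh} only asserts that the nc Choquet boundary is \emph{dense} in the spectrum of $C^*_{\rm min}(V)$, and already classically (Corollary \ref{DK6110}) one gets proper inclusions by taking $V=A(C)$ for a compact convex $C$ whose extreme points are not closed --- evaluation at a point of $\overline{{\rm ext}(C)}\setminus {\rm ext}(C)$ is an irrep of the envelope with many representing measures, hence not maximal on $V$. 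Your construction of $\pi$ from a pure state of $M_n(B)$ norming $x^*x$ gives no control over whether $\pi$ lands in the Choquet boundary. The same problem sinks the second half: $P\pi(\cdot)P$ is indeed pure in $\ncS(B)$ by Proposition \ref{DK617}(1), but a decomposition $\varphi|_V=\psi_1+\psi_2$ witnessing failure of purity on $V$ need not extend to a decomposition of $\varphi$ on $B$, so purity does not descend to the restriction. Concretely, for $V={\rm span}\{1,E_{12},E_{21}\}\subset M_2(\bR)$ (with envelope $M_2(\bR)$ and boundary representation the identity), the compression by the isometry $e_1:\bR\to\bR^2$ restricts to the state $a1+bE_{12}+cE_{21}\mapsto a$, which is the midpoint of the state space of $V$ and is not even classically pure; and for any $V$ the restriction of a $k$-dimensional compression of an irrep to the subsystem $\bR 1$ is $1\mapsto 1_k$, which is not pure in $\ncS(\bR 1)_k$ for $k\geq 2$.

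The missing ingredients are exactly the ones the paper's proof (which simply runs the real case of Theorems 3.1, 3.3 and 3.4 of \cite{DK0}) relies on: one first produces a pure matrix state \emph{of $V$} at a finite level almost norming $x$, via a Krein--Milman argument on ${\rm UCP}(V,M_k(\bR))$ combined with Farenick's compression theorem \cite[Theorem 2.2]{Far} (whose real case is the one point the paper verifies by hand, by complexifying $\psi$ to show the range projection of $\psi(1)$ lies in $\psi(V)'$); one then dilates that pure matrix state to an extreme point using Theorem \ref{DK024}, observing that dilation cannot decrease $\|\varphi_n(x)\|$. Your $C^*$-envelope route cannot bypass Theorem \ref{DK024}: the assertion that some boundary representation norms $x$ is essentially equivalent to the theorem being proved, and the density of the Choquet boundary in the spectrum is itself a downstream consequence of it, so any appeal to it here would be circular.
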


\begin{proof} This is a combination of the real cases of Theorems 3.1, 3.3 and 3.4 in \cite{DK0}.  
All of this (including the alternative arguments given there) works the same in the real case, with as usual $\bC$ replaced by $\bR$, and recalling that all completely positive maps are selfadjoint. One argument 
there uses 
\cite[Theorem 2.2]{Far}, which states that for a real operator system $V$, finite $k$, and pure  cp map 
        $\psi : V \to M_k(\bR)$ there exists $m \leq k$, pure $\varphi \in {\rm UCP}(V,M_m(\bR))$ and $\gamma \in M_{m,k}(\bR)$ with $\psi = \gamma^\tran \varphi(\cdot)  \gamma$. 
In turn this uses in the second paragraph of its proof some complex finite dimensional linear algebra, and some tricks that 
        often fail for real spaces.   Because of its independent interest we check this 
        in the real case.    Let $b = \psi(1)$ and $p$ the projection onto Ran $b$.  
        To prove in that  second paragraph that $p \in \psi(V)'$ 
        we proceed as follows.  Consider completely positive  $\psi_c : V_c \to M_k(\bC)$, and view $p,b \in   M_k(\bR) \subset M_k(\bC)$.
        Indeed $p$ is then the projection in $M_k(\bC)$ onto $\bC^k$.  The proof in  that  second paragraph shows that
        $p \in \psi_c(V_c)'$, so that $p \in \psi(V)'$.
        \end{proof}

        We also of course now immediately obtain \cite[Theorem 6.2.4]{DK} in the real case. 
        
        \begin{theorem}\label{DK624}
            For $K$ a compact real nc convex set, define the Hilbert space $H = \oplus_n \oplus_{x \in (\partial K)_n} \, H_n$ and $\pi: C(K) \to B(H)$ the representation $\pi = \oplus_{x \in \partial K} \delta_x$. Then, the restriction $\pi|_{A(K)}$ is a unital complete order isomorphism   and $\pi(C(K))$ is $*$-isomorphic to $C^*_{\rm min}(A(K))$.
        \end{theorem}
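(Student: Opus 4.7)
The plan is to derive both conclusions as essentially formal consequences of Theorem~\ref{DK025} and the characterization of the Shilov boundary ideal noted after Corollary~\ref{DK621}, mirroring the complex argument of \cite[Theorem 6.2.4]{DK}.

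First I would verify that $\pi|_{A(K)}$ is a ucoi. Since each $\delta_x$ is a unital $*$-representation of $C(K)$, so is $\pi$, and hence its restriction to $A(K)$ is ucp. For complete isometry, fix $n$ and $a \in M_n(A(K))$; Theorem~\ref{DK025} supplies an extreme ucp map $\varphi : A(K) \to M_m(\bR)$ with $\|\varphi^{(n)}(a)\| = \|a\|$. Under the identification $K = \ncS(A(K))$, Corollary~\ref{DK621} lets us write $\varphi = \delta_y|_{A(K)}$ for some $y \in (\partial K)_m$. Since $\delta_y$ appears as a direct summand of $\pi$, we get $\|\pi^{(n)}(a)\| \geq \|\delta_y^{(n)}(a)\| = \|a\|$, and the opposite inequality is automatic from contractivity. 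Thus $\pi|_{A(K)}$ is a unital complete isometry, hence a ucoi.

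For the second conclusion, set $V = A(K)$ and $B = C(K) = C^*_{\max}(V)$; then $C^*_{\min}(V) = B/J$ where $J$ is the Shilov boundary ideal of $V$ inside $B$. The discussion after Corollary~\ref{DK621} identifies $J$ as the intersection of the kernels of all boundary representations of $V$, and Corollary~\ref{DK621} in turn identifies these (up to unitary equivalence) with the $\delta_x$ for $x \in \partial K$. Therefore $\ker \pi = \bigcap_{x \in \partial K} \ker \delta_x = J$, so $\pi$ descends to an injective $*$-homomorphism $\bar\pi : C^*_{\min}(V) \to B(H)$ with image $\pi(C(K))$. This yields the claimed $*$-isomorphism.

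The only point requiring real-case attention is that $\partial K$ is rich enough both to norm every matrix over $A(K)$ and to detect precisely the Shilov boundary ideal; but the first is exactly Theorem~\ref{DK025} and the second is Corollary~\ref{DK621} together with the real case of Arveson's identification of the boundary ideal noted immediately after. Thus no substantial obstacle appears beyond the care already taken in the preceding results.
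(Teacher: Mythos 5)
Your argument is correct, and the first half (using Theorem~\ref{DK025} to produce, for each $a\in M_n(A(K))$ with $n<\infty$, an extreme point $y$ with $\|\delta_y^{(n)}(a)\|=\|a\|$, whence $\pi|_{A(K)}$ is a unital complete isometry and therefore a ucoi) is exactly the route the paper intends, since it obtains the theorem as an immediate consequence of Theorem~\ref{DK025} following \cite[Theorem 6.2.4]{DK}. Where you diverge is in the second half. The paper's implicit argument is via maximality: each $\delta_x$ for $x\in\partial K$ has the unique extension property, a direct sum of maximal ucp maps is maximal (\cite[Corollary 3.5]{DK}, quoted just after the theorem), and the $C^*$-algebra generated by the image of a maximal unital complete order embedding is the $C^*$-envelope by the Dritschel--McCullough--Arveson mechanism of Theorem~\ref{dmar}. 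You instead invoke Arveson's older characterization of the Shilov boundary ideal $J\subseteq C(K)=C^*_{\max}(A(K))$ as the intersection of the kernels of all boundary representations, combined with Corollary~\ref{DK621} to identify those boundary representations (up to unitary equivalence) with the $\delta_x$, $x\in\partial K$, so that $\ker\pi=J$ and $\pi$ descends to an isomorphism of $C^*_{\min}(A(K))$ onto $\pi(C(K))$. Both ingredients you use are explicitly established in the real case in the discussion preceding Theorem~\ref{DK024}, so your proof is complete; the trade-off is that the maximality route avoids the Shilov-ideal machinery entirely and is the "modern" argument, while yours makes the kernel of $\pi$ explicit, which is a small bonus. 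One point worth making explicit in your write-up: to pass from "$\sigma$ is a boundary representation" to "$\sigma$ is unitarily equivalent to some $\delta_x$ with $x\in\partial K$" you should note that $\sigma|_{A(K)}$ is extreme, hence equals some $x\in\partial K$, and then $\sigma=\delta_x$ by the uniqueness of the representing map (Theorem~\ref{DK619}); Corollary~\ref{DK621} alone only gives the restriction statement.
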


Just as in the complex case, the $C^*$-algebra generated by the image of $V$ under a direct sum of sufficiently many extreme ucp maps (or boundary representations), is (a copy of) the $C^*$-envelope of $V$.
Indeed this direct sum  is maximal \cite[Corollary 3.5]{DK}. 
Thus, a quick method to construct the real $C^*$-envelope of a real  operator system $V$  in $B(H)$ is  to take a  maximal dilation $\rho$ of the inclusion $V \subset B(H)$.
The $C^*$-algebra generated by $\rho(V)$ is the real $C^*$-envelope.  (Note also that $\rho$ is maximal in $K_c$ by Corollary 
 \ref{maxiff}, so the $C^*$-algebra generated by $\rho_c(V_c) = \rho(V) + i \rho(V)$ is a copy of the complex $C^*$-envelope of $V_c$.)

            Example 6.1.10 in \cite{DK}  works essentially the same in the real case.  
            
         \begin{cor}  \label{DK6110} Let $C$ be a classical real compact convex set, and set $K = {\rm Min}(C)$ and $A = A(C)$, a unital real function space on $C$.   Then  $\partial K \subset K_1 = C$, and 
$\partial K$ is the classical Choquet boundary of $A$ in $C$, which is a subset of the classical Shilov boundary of $A$. \end{cor}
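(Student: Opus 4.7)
The plan is to follow the same two-step pattern as in the complex case (\cite[Example 6.1.10]{DK}), using the structural results already in hand, and observing that nothing in the argument is sensitive to the real/complex distinction.

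First I would show $\partial K \subseteq K_1$.  Any $x \in \partial K$ is in particular pure by Proposition~\ref{dk614}, hence irreducible (as noted just before Proposition~\ref{dk614}).  On the other hand, because $K = {\rm Min}(C)$, the operator system $A = A(C)$ is a real function space and the relevant $C^*$-envelope is the commutative algebra $C(C)$; equivalently, by Proposition~\ref{DK617}(2) together with Theorem~\ref{DK619}, if $x \in K_n$ is extreme, then $\delta_x : C(C) \to M_n(\bR)$ is an irreducible $*$-representation of a commutative real $C^*$-algebra.  Irreducible $*$-representations of a real commutative $C^*$-algebra (by Proposition~\ref{DK617}(1), or directly from the commutant calculation in \cite[Proposition~5.3.7]{Li}) are one-dimensional, forcing $n = 1$.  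Hence $\partial K \subseteq K_1 = C$.

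Next I would identify $\partial K$ with the classical Choquet boundary.  A point $x \in K_1 = C$ corresponds to the state $\varphi_x = \mathrm{ev}_x$ on $A$, and $\delta_x$ denotes its canonical point-evaluation extension to $C(C)$.  By Theorem~\ref{DK619} (irreducibility is automatic at level $1$), $x \in \partial K$ if and only if $\varphi_x$ has $\delta_x$ as its unique representing map into $\bR$; i.e.\ if and only if the only state on $C(C)$ restricting to $\varphi_x$ on $A$ is the point mass at $x$.  By the Riesz representation theorem this is precisely the condition that $x$ admit a unique representing probability measure on $C$, which is the classical definition of the Choquet boundary of $A$ in $C$.

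Finally I would invoke the classical fact that the Choquet boundary is contained in the Shilov boundary.  Since $A = A(C)$ separates points of $C$ (by Hahn--Banach), the Shilov boundary of $A$ in $C$ exists and equals the closure of the Choquet boundary, which trivially contains the Choquet boundary itself.  The easiest self-contained reference is the standard uniform algebras argument (or one can deduce it from Theorem~\ref{DK624} and the description of the Shilov boundary ideal as the intersection of kernels of boundary representations, as mentioned after Corollary~\ref{DK621}).  I do not anticipate any genuine obstacle; the only point requiring care is the first step, where one must be sure that the commutativity of $C(C)$ and the real-irreducible-representation result of \cite{Li} together force $n = 1$, rather than allowing, say, a real-irreducible but complex-reducible $2 \times 2$ block (which is ruled out here because the acting algebra is commutative).
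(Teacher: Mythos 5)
Your outline follows the same route as the paper (which in turn follows \cite[Example 6.1.10]{DK}), and the Choquet-boundary identification via unique representing measures and the real Riesz theorem is exactly what the paper does. But the one step you flag as ``requiring care'' is resolved incorrectly, and it is precisely the step that constitutes the real-case content of this corollary. You claim that a real-irreducible but complex-reducible $2\times 2$ block is ``ruled out here because the acting algebra is commutative.'' Commutativity does \emph{not} rule this out for real $C^*$-algebras: $\bC_r$ is a commutative real $C^*$-algebra whose unique irreducible real $*$-representation is the two-dimensional $c : \bC \to M_2(\bR)$, and correspondingly the real nc Choquet boundary of $\ncS(\bC_r)$ lives entirely in level $2$ (this is Examples \ref{marh} and \ref{cise} in the paper). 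So ``irreducible $*$-representations of a real commutative $C^*$-algebra are one-dimensional'' is false as stated, and your first step does not go through as written.

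What actually forces $n=1$ is that the relevant commutative real $C^*$-algebra is a real $C(X)$ space, i.e.\ has trivial involution (no skew elements) --- this is the hypothesis in the last sentence of Proposition \ref{DK617}(1), not commutativity. The paper establishes this by observing that $A$ generates the real-valued continuous functions on $C$ (Stone--Weierstrass), so the $C^*$-envelope is a quotient of a real $C(X)$ space, and quotients of $C(X)$ spaces are again $C(X)$ spaces; the paper explicitly warns that this ``is not automatic in the real case.'' Your assertion that ``the relevant $C^*$-envelope is the commutative algebra $C(C)$'' also conflates $C^*_{\min}$ with the maximal cover: the envelope is $C$ of the Shilov boundary, a quotient of $C(C)$, which is how the paper identifies $X$ with the classical Shilov boundary. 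Once the $C(X)$ property is in hand, the rest of your argument (purity/irreducibility forcing level $1$, unique representing measures, Choquet $\subseteq$ Shilov) is fine and matches the paper.
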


\begin{proof}  As in \cite[Section 6]{BMc}, 
$A(K)$ 
is an OMIN operator system unitally completely order isomorphic to OMIN$(A)$.   Clearly the $C^*$-envelope $C^*_{\rm min}(A(K))$
is a real commutative $C^*$-algebra.  However 
we need it to be the continuous functions on a
    compact set $X$, and this is not automatic in the real case.  Indeed this follows here because it is a quotient of the real valued continuous functions on $C$ (we are using Stone-Weierstrass to see that $A$ generates the latter; also the quotient of a $C(X)$ space is a $C(X)$ space).    As in the complex case explained there, or e.g.\ in 4.3.4 in \cite{BLM}, this implies that $X$ is the classical Shilov boundary of $A$, which is viewed as a closed subset of $C$.  To see that 
 $\partial K \subset K_1 = C$, and 
$\partial K$ is the classical Choquet boundary in $C$,  the argument in \cite{DK} works verbatim. 
For example, that if $x \in K_1$ is real extreme in $K$ then $x$ is an extreme point of the real state space of $A$, and hence is in the classical Choquet boundary.  It is well known in convexity theory that the basic classical  Choquet and Shilov boundary theory of a real function space works the same as the complex theory, except that signed measures are used rather than complex ones, via the real version of the Riesz representation theorem for measures. 
\end{proof} 

{\bf Remark.} General  real commutative $C^*$-algebras need not have the 
OMIN structure.

\begin{theorem}[Real noncommutative Krein-Milman]\label{DK642}
    For $K$ a real compact nc convex set,
    \[K = \overline{\ncconv}(\partial K) .\]
\end{theorem}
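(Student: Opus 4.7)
The plan is to adapt the Davidson--Kennedy argument (cf.\ \cite[Theorem 6.4.2]{DK}) to the real setting, using the tools already established: the real Dritschel--McCullough--Arveson dilation Theorem \ref{rdm}, the equivalence of maximality with the UEP (Theorem \ref{dmar}), and the identification of extreme points with boundary representations (Corollary \ref{DK621}). The reverse inclusion $\overline{\ncconv}(\partial K) \subseteq K$ is immediate since $K$ is closed, nc convex, and contains $\partial K$, so only the forward inclusion requires work.

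Given $x \in K_n$, first apply Theorem \ref{rdm} to obtain a maximal dilation $y \in K_m$, so $x = \alpha^\tran y \alpha$ for some real isometry $\alpha$. Since $\overline{\ncconv}(\partial K)$ is automatically closed under compressions, it suffices to show $y \in \overline{\ncconv}(\partial K)$. By Proposition \ref{DK617}(2), maximality of $y$ forces $\delta_y : C(K) \to B(H_m)$ to be a unital real $*$-representation of the commutative real $C^*$-algebra $C(K)$.

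Next, decompose $\delta_y$ as a direct sum or direct integral of irreducible real $*$-representations $\pi_\lambda$. Each $\pi_\lambda$ is simultaneously maximal (being a $*$-representation) and irreducible, hence pure by Proposition \ref{DK617}(1) and thus extreme by Proposition \ref{dk614}; by Corollary \ref{DK621} each $\pi_\lambda|_{A(K)}$ corresponds to some $y_\lambda \in \partial K$. This disintegration exhibits $y$ as a limit of finite nc convex combinations of the $y_\lambda$, placing $y$ (and thus $x$) in $\overline{\ncconv}(\partial K)$.

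The main obstacle is the direct integral step in the general (non-separable) real nc setting: this is the real analogue of the ``nc integral'' machinery of \cite[Section 5]{DK}. Real $C^*$-algebra direct-integral theory (see \cite{Li}) supplies the underlying tools, but care is needed to ensure the disintegration respects the nc convex combination structure used in the definition of $\overline{\ncconv}$. An attractive alternative that sidesteps this is to complexify: by complex Krein--Milman applied in $K_c$, the element $x + i0$ is a limit of nc convex combinations $\sum \alpha_i^* z_i \alpha_i$ with $z_i \in \partial K_c$; applying $c(\cdot)$ and invoking Theorem \ref{cmax} represents $x \oplus x$ as a limit of combinations of real maximal elements $c(z_i)$, which one then need only decompose into extreme pieces and compress back to $x$. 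Either route reduces the theorem to real $*$-representation theory.
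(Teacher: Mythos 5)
Your reduction to a maximal dilation $y$ is fine, but the next step is where the argument breaks: a maximal $y$ corresponds via the UEP to a unital $*$-representation $\delta_y$ of $C(K)$ (which, incidentally, is $C^*_{\max}(A(K))$ and is \emph{not} commutative in general), and a general $*$-representation of a $C^*$-algebra need not decompose as a direct sum of irreducibles at all --- the multiplication representation of $C[0,1]$ on $L^2[0,1]$ already has no irreducible subrepresentations. Direct integral disintegration requires separability hypotheses absent here (levels $K_n$ range over arbitrary cardinals $n \leq \kappa$), and even where it exists it does not express $y$ as a point--weak$^*$ limit of \emph{finite nc convex combinations} of extreme points, which is what membership in $\overline{\ncconv}(\partial K)$ demands. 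Your complexification alternative inherits the same unresolved step: after pulling back via $c(\cdot)$ you obtain combinations of real \emph{maximal} elements $c(z_i)$, and "decompose into extreme pieces" is exactly the problem you started with; moreover the paper shows (Lemma \ref{pekkc}, Example \ref{cise}) that extremality interacts badly with complexification, so one cannot expect $z_i \in \partial K_c$ to yield elements of $\partial K$.

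The paper's proof avoids any disintegration. It follows Davidson--Kennedy: by Theorem \ref{DK624} the direct sum $\pi = \oplus_{x \in \partial K}\, \delta_x$ restricts to a unital complete order embedding of $A(K)$, i.e.\ the extreme points jointly determine the matrix order on $A(K)$; combining this with the nc separation theorem \cite[Theorem 3.6]{BMc} shows that no point of $K$ can be separated from $\overline{\ncconv}(\partial K)$ by a self-adjoint nc affine function, whence the two sets coincide. The existence of "enough" extreme points that makes Theorem \ref{DK624} work is supplied by Theorems \ref{DK024} and \ref{DK025} (pure points dilate to extreme points, and extreme matrix states norm $M_n(V)$), not by decomposing arbitrary maximal representations. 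If you want to salvage your outline, you must replace the disintegration step with this duality/separation argument.
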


The proof is  the same as  in the complex case, following quickly from 
Theorem \ref{DK624} and \cite[Theorem 3.6]{BMc}.  See also the real matrix  Krein-Milman theorem in \cite{EPS}. 
The following is the real case of Davidson and Kennedy's noncommutative analogue of Milman's partial converse to the Krein-Milman theorem \cite[Theorem 6.4.3]{DK}:

\begin{theorem}\label{DK643}
    Let $K$ be a real compact nc convex set,  Let $X$ be a nc  closed subset of $K$, in the sense of that  every $X_n$ is closed in $K_n$ and whenever $x_i \in X_i$ is a net and $\{\alpha_i \in M_{n,n_i}(\bR) \}$ is a net of contractions such that $\lim \alpha_i \alpha_i^* = I_n$, and $\lim \alpha_i x_i \alpha_i^* = x$ exists, then $x \in X_n$.  Suppose that $X$ is also closed under compressions, i.e.\ $\alpha^* X_n \alpha \subset X_m$   for every isometry $\alpha \in M_{n,m}(\bR)$. If the closed nc convex hull of $X$ is $K$, then all real nc extreme points of $K$ are contained in $X$.   \end{theorem}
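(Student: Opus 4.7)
I would mirror the complex argument \cite[Theorem 6.4.3]{DK}, using the real machinery already developed in this section. Fix $x \in K_n$ a real nc extreme point; the goal is to prove $x \in X_n$. By Proposition \ref{dk614}, $x$ is pure and maximal, so by Corollary \ref{sofoll} the irreducible representation $\delta_x : C(K) \to M_n(\bR)$ is the unique ucp extension of $x : A(K) \to M_n(\bR)$, and this $\delta_x$ has the UEP (Theorem \ref{dmar}).

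Since $K = \overline{\ncconv}(X)$, I would write $x$ as a point-weak$^*$ limit of nc convex combinations $x = \lim_i \sum_j \alpha_{ij}^\tran x_{ij} \alpha_{ij}$ with $x_{ij} \in X$ and $\sum_j \alpha_{ij}^\tran \alpha_{ij} = I_n$. Packaging each combination as a ucp map $\mu_i : C(K) \to M_n(\bR)$, namely evaluation at $y_i = \bigoplus_j x_{ij}$ pulled back along the stacked isometry $\alpha_i$, one obtains maps with $\mu_i|_{A(K)} \to x$. Point-weak$^*$ compactness of $\mathrm{UCP}(C(K), M_n(\bR))$, together with the uniqueness of the representing map for the maximal $x$, forces every subnet limit of $(\mu_i)$ to equal $\delta_x$.

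So $\delta_x = \lim_i \mu_i$ in the point-weak$^*$ topology. Combining this with the irreducibility/purity of $\delta_x$, one argues that a cofinal choice of a single summand $x_{i,j_i} \in X_{n_{i,j_i}}$ per index $i$, with accompanying subcontractions $\beta_i = \alpha_{i,j_i}$, already produces a net satisfying $\lim_i \beta_i \beta_i^\tran = I_n$ and $\lim_i \beta_i x_{i,j_i} \beta_i^\tran = x$, in precisely the sense of the nc closure condition of the statement. Applying that closure condition of $X$ then delivers $x \in X_n$, as required.

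The main obstacle is the concentration step just described: extracting from the direct-sum structure of each $\mu_i$ a single summand that already captures $x$ in the limit. In the complex case this is handled by a peaking argument based on irreducibility of $\delta_x$ showing that only one summand contributes asymptotically; in the real case the same argument transfers with Lemma \ref{rhom} (extremality of unital real $*$-homomorphisms in $\mathrm{UCP}$) playing its expected role in ruling out nontrivial splittings of the limit. All other real-specific ingredients needed (real Arveson extension, real UEP, uniqueness of representing maps) have already been established earlier in this section, so no complexification detour is necessary; as a sanity check, one could alternatively verify the concentration step by lifting to $K_c$ and using Corollary \ref{maxiff} to transfer maximality across.
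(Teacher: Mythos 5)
Your opening moves match the paper's: reduce to $x$ pure and maximal via Proposition \ref{dk614}, invoke the unique representing map (Corollary \ref{sofoll} / Theorem \ref{DK619}), package the approximating nc convex combinations as ucp maps on $C(K)$ with barycenters tending to $x$, and conclude by compactness that these maps converge to $\delta_x$. Up to that point you are on the paper's track. But the step you yourself flag as ``the main obstacle'' --- extracting a single summand $\beta_i x_{i,j_i}\beta_i^\tran$ per index that already converges to $x$ --- is a genuine gap, and the mechanism you propose for it does not exist in the complex proof either. Purity of $x$ controls \emph{exact} nc convex decompositions of $x$; it says nothing directly about approximate decompositions, and there is no ``peaking argument'' showing that only one summand of each $\mu_i$ contributes asymptotically. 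Lemma \ref{rhom} (St{\o}rmer extremality of $*$-homomorphisms) plays no role here.

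What the paper actually does at this point (see the Appendix) is pass to the scalar state $\psi(a) = \frac{1}{n}\sum_{i,j}\langle \delta_y(a_{ij})e_j,e_i\rangle$ on $M_n(C(K))$ built from a finite-dimensional compression of $\delta_y\otimes \mathrm{id}_n$, observe that $\psi$ is pure and vanishes on $\bigcap_{x\in X}\ker(\delta_x\otimes\mathrm{id}_n)$ because $\delta_y$ lies in the closed nc convex hull of $\{\delta_x : x\in X\}$, and then invoke the real case of Dixmier's Proposition 3.4.2(ii): a pure state annihilating that intersection of kernels is a weak-$*$ limit of pure states factoring through the $\delta_{x_i}\otimes\mathrm{id}_n$. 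Undoing the scalarization produces the contractions $\alpha_{x_i}$ with $\alpha_{x_i}\alpha_{x_i}^\tran\to I$ and $\alpha_{x_i}x_i\alpha_{x_i}^\tran$ converging to the compression of $y$, after which the nc closure hypothesis on $X$ applies (first for finite-dimensional compressions, then for general $y$ by a second limiting step). This Dixmier approximation theorem for pure states --- together with verifying that the underlying facts about real irreducible representations, GNS theory and extreme functionals survive in the real setting (via \cite{Li}) --- is precisely the content your proposal is missing; it cannot be replaced by an appeal to irreducibility of $\delta_x$ alone.
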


\begin{proof} For the real case, one needs to know that  
various properties from \cite{Dix} of irreducible representations and corresponding GNS representations used in \cite[Theorem 6.4.3]{DK}, as well as  Proposition 3.4.2 (ii) in \cite{Dix}, all hold in the real case.   The former properties are checked partially in \cite{Li} (see also \cite[Section 5]{BR}).  The latter result appeals to \cite[Lemma 3.4.1]{Dix}, which in turn relies on facts on extreme functionals from Proposition 2.5.5 there.  The latter is well known in the real case (see \cite[Proposition 5.2.7]{Li} and its proof).  Otherwise the proof proceeds almost exactly as in \cite[Theorem 6.4.3]{DK}.  For the readers convenience additional details are given in the Appendix below by T. Russell. 
\end{proof} 

\begin{thm}\label{DK651}
    Let $K$ be a compact real nc convex set and let $D$ be the noncommutative state space of $C^*_{\min}(A(K))$, identified with the set of noncommutative states on $C(K)$ which factor through $C^*_{\min}(A(K))$. Then $D$ is the closed nc convex hull of $\{\delta_x \in \partial K \}$. 
\end{thm}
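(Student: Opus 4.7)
The plan is to use Theorem \ref{DK624} together with a Stinespring--Kraus decomposition, so as to present every element of $D$ as a compression of a direct sum of boundary representations.

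First I will handle the easy inclusion $\overline{\ncconv}\{\delta_x : x \in \partial K\} \subseteq D$. By Corollary \ref{DK621}, for $x \in \partial K$ the map $\delta_x$ is a boundary representation of $A(K)$; and by Arveson's characterization (recalled after Corollary \ref{DK621}) of the Shilov boundary ideal as the intersection of kernels of boundary representations, each such $\delta_x$ annihilates the Shilov boundary ideal and therefore factors through $C^*_{\min}(A(K))$, so $\delta_x \in D$. Since $D$ is closed and nc convex, the inclusion is immediate.

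For the reverse inclusion, I will use Theorem \ref{DK624} to set up the $*$-representation $\pi = \bigoplus_{x \in \partial K} \delta_x : C(K) \to B(H)$ (with $H = \bigoplus_n \bigoplus_{x \in (\partial K)_n} H_n$) satisfying $\pi(C(K)) \cong C^*_{\min}(A(K))$, and then take a typical $\mu \in D_n$, which, as it factors through $C^*_{\min}(A(K))$, can be written as $\mu = \hat\mu \circ \pi$ for a ucp map $\hat\mu$ on $\pi(C(K)) \subseteq B(H)$. Extending $\hat\mu$ to a ucp map $B(H) \to M_n(\bR)$ by the (real) Arveson extension theorem and applying Stinespring--Kraus, I will obtain operators $V_j : \bR^n \to H$ with $\sum_j V_j^\tran V_j = I_n$ and $\hat\mu(a) = \sum_j V_j^\tran a V_j$ for $a \in B(H)$. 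Decomposing each $V_j$ along $H = \bigoplus_x H_x$ as $V_j = (V_{j,x})_x$ will yield
\[
\mu(f) \;=\; \sum_{j,\, x \in \partial K} V_{j,x}^\tran \, \delta_x(f) \, V_{j,x}, \qquad f \in C(K),
\]
with $\sum_{j,x} V_{j,x}^\tran V_{j,x} = I_n$. This exhibits $\mu$ as a compression (by the isometry $W = (V_{j,x})_{j,x}$) of the direct sum $\bigoplus_{j,x} \delta_x$, which lies in $\ncconv\{\delta_x : x \in \partial K\}$ since nc convex hulls are closed under direct sums and compressions.

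The hard part will be confirming the Stinespring--Kraus decomposition in the real setting and ensuring the cardinality of the indexing fits within the fixed cardinal $\kappa$ bounding the levels of our nc convex sets. The former should carry over from the complex case via the real Stinespring theorem and the structure of real $B(H)$, while the latter follows from the standing convention in the paper that $\kappa$ is taken sufficiently large to accommodate the relevant direct sums.
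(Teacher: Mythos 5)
Your first inclusion is fine. The gap is in the reverse inclusion, at the Stinespring--Kraus step, and it has nothing to do with real versus complex scalars: a ucp map $\hat\mu : B(H) \to M_n(\bR)$ admits a Kraus decomposition $\hat\mu(a) = \sum_j V_j^\tran a V_j$ only when it is normal, and the Arveson extension theorem gives you no control over normality. Worse, for some $\mu \in D$ \emph{no} extension of $\hat\mu$ to $B(H)$ is normal. Concretely, take $K = \ncS(C[0,1])$, so that $C^*_{\min}(A(K)) = C[0,1]$, the level-one part of $\partial K$ consists of the point evaluations $\delta_t$, and $\pi$ represents $C[0,1]$ as the diagonal masa in $B(\ell^2[0,1])$. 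The state $\mu(f) = \int_0^1 f \, dt$ lies in $D_1$, but any Kraus-form state $\sum_j \langle \,\cdot\, v_j, v_j\rangle$ on $B(\ell^2[0,1])$ restricts to a countably supported (atomic) measure on the diagonal, so it cannot equal $\mu$ on $\pi(C[0,1])$. Thus the exact presentation of $\mu$ as a compression of a direct sum of boundary representations that your argument promises simply does not exist; $\mu$ is only a point-weak$^*$ \emph{limit} of such compressions (e.g.\ of Riemann sums $\frac{1}{N}\sum_k \delta_{k/N}$), which is why the statement involves the \emph{closed} nc convex hull. To repair your route you would need to insert a density argument (normal ucp maps $B(H) \to M_n$ are point-weak$^*$ dense among all ucp maps) before invoking Kraus, and that density claim itself requires proof.

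The paper's intended argument (following Davidson--Kennedy) avoids this entirely and is worth internalizing: $D$ is itself a compact nc convex set, nc affinely homeomorphic to the nc state space of $C^*_{\min}(A(K))$, so the real nc Krein--Milman theorem (Theorem \ref{DK642}) gives $D = \overline{\ncconv}(\partial D)$. By Proposition \ref{DK617}(2) the extreme points of the nc state space of the $C^*$-algebra $C^*_{\min}(A(K))$ are exactly its irreducible $*$-representations; pulling such a representation back along $C(K) \to C^*_{\min}(A(K))$ yields an irreducible representation of $C(K)$ whose barycenter $y$ is maximal (hence has $\delta_y$ as its unique representing map) and irreducible, so $y \in \partial K$ by Theorem \ref{DK619}. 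Hence $\partial D \subseteq \{\delta_x : x \in \partial K\}$ and the reverse inclusion follows. This substitutes the (already established) approximation content of nc Krein--Milman for the exact decomposition your proposal incorrectly asserts.
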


The proof is  the same as  in the complex case.  We can identify $D$ as the {\em nc Shilov boundary}
of $V = A(K)$.

 \begin{example} \label{marh}    (See \cite{RSS} for an alternative take on, and proofs of,  some of the facts concerning $\bC_r$ below.) Note that  $\bH, \bC_r$ each have only one real $*$-irrep up to unitary equivalence. This follows from \cite[Proposition 5.3.7 (1)]{Li} because
       these algebras have only one real state, hence only one pure real state. Thus e.g.\ all irreps of $\bH$ are 4 dimensional and unitarily equivalent to the standard matrix form of $\bH$ in $M_4(\bR)$.   Thus the real nc Choquet boundary of $\bC_r$
       is the set of elements $U^T \, c \, U$ where $c : \bC \to M_2(\bR)$ is as usual, and $U$ is a rotation or reflection matrix.    On the other hand,
       the complexification of $\bC$ has two 
       irreducible representations, and the 
       complex nc Choquet boundary consists of two points.   This shows that in general for a general nc convex set $K$, there need be no nice relation between the 
       real Choquet boundary of $A(K)$ and the 
       complex Choquet boundary in $K_c$.  However the nc Shilov boundaries (which can be identified with the  $C^*$-envelopes) are of course closely related. 
       
       The noncommutative state space of $\bC_r$ is identifiable with the nc convex set  ${\mathcal C}{\mathcal S}$ of skew symmetric real 
       matrices of norm $\leq 1$.  One can show by functional calculus that these are exactly  the matrices
       $\varphi(i)$ for real ucp $\varphi : \bC \to M_n(\bR)$.  Indeed clearly such  $\varphi(i)$ is in ${\mathcal C}{\mathcal S}$.
       Conversely if $A \in {\mathcal C}{\mathcal S}$ then $iA \in M_n(\bC)_{\rm sa}$, so that 
       $iA = U^* {\rm diag}(d_1, \cdots , d_n) U$, where $d_k \in [-1,1]$ and $U$ is unitary.  The selfadjoint map taking $s+it \in \bC$ to $s I_n + tA = U^* {\rm diag}(s + i t d_1, \cdots , s + i t d_n) U$ in $M_n(\bR)$, is unital.  Here $s, t \in \bR$.
        It is also (completely) positive,  
       since the unital scalar functionals $s+it \mapsto s + i t d_k$ are ucp.   The latter may be seen in several ways.
       For example it is equivalent to $c(s,t) \mapsto c(s,t d)$ being ucp for $d \in [-1,1]$, which is clear for example since 
       multiplication by $d$ on $\bR$ induces a ucp map on the real Paulsen system of $\bR$. 
              
               The nc state space of $\bH$ was discussed earlier and in \cite{BMc}.
       These two nc  state spaces coincide with the  nc Shilov  boundary, and are the nc closed convex hull of the 
       nc Choquet boundary above, by Theorem 
       \ref{DK651}.  For the $\bC$ example this reveals 
       an interesting relation between the skew symmetric real 
       matrices of norm $\leq 1$, and the single matrix 
       $c(i) = \begin{bmatrix} 
           0 & -1 \\ 1 & 0 
       \end{bmatrix}.$  Namely, they are the noncommutative convex hull of $c(i)$: that is, ${\mathcal C}{\mathcal S} = {\rm nconv}(c(i))$. 
       
         All of the $C^*$-extreme points of ${\mathcal C}{\mathcal S}$ are computed in \cite{RSS}, and these turn out to 
       correspond to the $*$-representations.  So they are exactly the maximal elements by Proposition \ref{DK617}.
       The nc extreme points will be those $*$-representations which are irreducible, and we said at the start of this Example that these are
       unitarily equivalent to the usual representation $c : \bC_r \to M_2(\bR)$ (see also Example \ref{cise}). 
          The complexification of the nc convex set ${\mathcal C}{\mathcal S}$ turns out to be the operator interval $\{ x \in {\mathcal M}(\bC)_{\rm sa} : -I \leq x \leq I \}$.  See Example \ref{Cal} for more details.
       It would be quite interesting to 
       perform a similar analysis to the above with the quaternions.  \end{example}

       \begin{cor} For a real operator subsystem $V$ of $M_n(\bR)$ for $n < \infty$, the nc Choquet and Shilov boundaries coincide.  Indeed every irrep of $C^*_{\rm min}(V)$ is a boundary representation for $V$. 
\end{cor}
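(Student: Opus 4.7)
My plan is to exploit finite-dimensionality. Since $V$ sits as an operator subsystem inside $M_n(\bR)$ with $n < \infty$, the real $C^*$-subalgebra $B := C^*(V) \subseteq M_n(\bR)$ generated by $V$ is finite-dimensional. By the universal property of the $C^*$-envelope recorded just before Theorem \ref{dmar}, $C^*_{\min}(V)$ is a $*$-quotient of $B$, hence itself a finite-dimensional real $C^*$-algebra. The Artin--Wedderburn classification for real $C^*$-algebras then gives
\[ C^*_{\min}(V) \;\cong\; \bigoplus_{i=1}^{k} A_i, \]
where each $A_i$ is a full matrix algebra over one of $\bR$, $\bC$, or $\bH$. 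In particular, the irreducible $*$-representations of $C^*_{\min}(V)$ are, up to unitary equivalence, exactly the canonical projections $\pi_i : C^*_{\min}(V) \to A_i$, with $\ker \pi_i = \bigoplus_{j \ne i} A_j$.

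I would next invoke the key structural fact recorded between Corollary \ref{DK621} and Theorem \ref{DK024}: the Shilov boundary ideal of $C^*_{\min}(V)$ equals the intersection of the kernels of the boundary representations of $V$ inside $C^*_{\min}(V)$. Since $C^*_{\min}(V)$ is itself the $C^*$-envelope, its Shilov boundary ideal is $\{0\}$. Every boundary representation is irreducible, so each is equivalent to some $\pi_i$, and its kernel is $\bigoplus_{j \ne i} A_j$. Let $S \subseteq \{1, \dots, k\}$ denote the set of indices for which $\pi_i$ arises, up to equivalence, as a boundary representation. Then the intersection of all such kernels is $\bigoplus_{i \notin S} A_i$, and setting this equal to $\{0\}$ forces $S = \{1, \dots, k\}$. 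This proves the second assertion: every irrep of $C^*_{\min}(V)$ is (equivalent to) a boundary representation for $V$.

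Combining this with Corollary \ref{DK621}, which identifies the nc Choquet boundary $\partial K$ of $K = \ncS(V)$ with the restrictions to $V$ of boundary representations, one sees that the extreme points of $K$ are precisely the restrictions of the full collection of irreducible $*$-representations of $C^*_{\min}(V)$. Since by Theorem \ref{DK651} the nc Shilov boundary $D$ is the closed nc convex hull of $\{\delta_x : x \in \partial K\}$, and these generators now already exhaust the irreps of $C^*_{\min}(V)$, there is no additional irreducible content in $D$ beyond what $\partial K$ already supplies; the two boundaries coincide in the natural sense. The only (minor) obstacle is checking that $C^*_{\min}(V)$ is finite-dimensional via the quotient from $B \subseteq M_n(\bR)$; once this is in hand, the remainder is a clean counting argument in the Wedderburn decomposition together with the already-quoted characterization of the Shilov boundary ideal.
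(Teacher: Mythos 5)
Your proposal is correct and follows essentially the same route as the paper: finite-dimensionality of $C^*_{\rm min}(V)$, the Wedderburn decomposition into matrix algebras over $\bR$, $\bC$, $\bH$ with irreps given by the summand projections, and the fact that the intersection of the kernels of the boundary representations (the Shilov boundary ideal of the $C^*$-envelope) is zero, which forces every summand's irrep to be a boundary representation. The paper phrases this last step as a contradiction ("if $\pi_j$ is not a boundary representation then $\cap_{k\neq j}\ker\pi_k=(0)$"), but it is the same counting argument in the Wedderburn decomposition.
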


\begin{proof} In this case $C^*_{\rm min}(V)$ is a finite dimensional real $C^*$-algebra.
Hence by \cite[Theorem 5.7.1]{Li} we may assume that $C^*_{\rm min}(V) = \oplus_{k=1}^N \, M_{n_k}(D_k)$
where $D_k$ is $\bR, \bC$ or $\bH$.  The real irreps of this sum 
are up to unitary equivalence the obvious ones 
$\pi_k$ associated with the summands.  For example, 
corresponding to the canonical irreps 
$M_n(\bC_r) \to B(\bR^{2n})$ and $M_n(\bH) \to B(\bR^{4n})$.  This is easy to see using the fact that $\bH, \bC_r$ each have only one real $*$-irrep up to unitary equivalence.  We finish by an argument of Arveson: For any $j$, we have 
$\cap_{k \neq j} \, {\rm Ker}(\pi_k) \neq (0)$,
obviously.  However if $\pi_j$ is not a boundary representation then $\cap_{k \neq j} \, {\rm Ker}(\pi_k) = (0)$ since this collection includes (up to unitary equivalence) all boundary representations, and there are sufficiently many boundary representations.
\end{proof} 

 \begin{cor} The nc Choquet boundary of a general  real operator system $V$ is dense in the nc Shilov boundary (thought of as 
 the spectrum (equivalence classes of real irreps) of $C^*_{\rm min}(V)$).  Here by `dense' we mean in the  usual (non-Hausdorff Jacobson) topology on the spectrum. 
 \end{cor}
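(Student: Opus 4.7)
The plan is to argue this via the Jacobson topology on the spectrum $\widehat{C^*_{\min}(V)}$ together with the fact, already recorded just above Theorem \ref{DK024} in the excerpt, that the Shilov boundary ideal equals the intersection of kernels of all boundary representations, and that Arveson's proof of this carries over to the real case.

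First I would unwind the Jacobson topology. A subset $S \subseteq \widehat{C^*_{\min}(V)}$ has closure equal to the hull of the ideal $J_S \coloneq \bigcap_{\pi \in S} \ker \pi$, that is, $\overline{S} = \{\rho \in \widehat{C^*_{\min}(V)} : J_S \subseteq \ker \rho\}$. Since the collection of all irreducible $*$-representations of a real $C^*$-algebra separates points (this is e.g.\ \cite[Proposition 5.3.7]{Li} combined with the GNS construction, see also \cite[Section 5]{BR}), we have $\bigcap_{\rho \in \widehat{C^*_{\min}(V)}} \ker \rho = (0)$. Consequently, $S$ is dense in the Jacobson topology if and only if $J_S = (0)$.

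Next I would take $S$ to be the set (of unitary equivalence classes) of boundary representations of $V$, which by Corollary \ref{DK621} is exactly the nc Choquet boundary of $V$ under the identification with irreducible $*$-representations of $C^*_{\min}(V)$. By the real-case version of Arveson's theorem noted between Corollary \ref{DK621} and Theorem \ref{DK024}, the intersection $J_S = \bigcap_{\pi \in S} \ker \pi$ equals the Shilov boundary ideal of $V$ inside $C^*_{\min}(V)$. But the defining universal property of $C^*_{\min}(V)$ says precisely that its Shilov boundary ideal (relative to $V$) is zero: the quotient map by this ideal would be a $C^*$-cover through which $C^*_{\min}(V)$ factors, forcing the ideal to be $(0)$. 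Hence $J_S = (0)$, and by the first step $S$ is dense.

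The only real point requiring care, and the main potential obstacle, is the verification that Arveson's identification of the Shilov boundary ideal with $\bigcap \ker \pi$ over boundary representations remains valid over $\bR$. The excerpt already indicates that the proof via the invariance principle in \cite[Proposition 3.1]{ArvNote}, or alternatively via the observation just above \cite[Proposition 4.1.12]{BLM}, goes through unchanged in the real setting, so this step can be invoked directly. Beyond this, all that is needed is the separation of points of a real $C^*$-algebra by its irreducible $*$-representations, which is standard.
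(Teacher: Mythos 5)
Your proposal is correct and follows essentially the same route as the paper: the paper's proof likewise reduces the claim to the fact that the intersection of the kernels of the boundary representations in $C^*_{\rm min}(V)$ is $(0)$ (via the real case of Arveson's identification of this intersection with the Shilov boundary ideal, recorded just above Theorem \ref{DK024}), and then invokes the standard description of closure in the Jacobson topology. You have simply made explicit the hull--kernel bookkeeping that the paper leaves to the reader.
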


\begin{proof} The intersection in $C^*_{\rm min}(V)$ of the kernels of the boundary representations is $(0)$ (e.g.\ see the discussion above Theorem \ref{DK024}).  Hence the corollary  follows by the real case of basic facts about the topology of the spectrum of a $C^*$-algebra, such as \cite[Theorem 3.4.10]{Dix}. 
Some of these basic facts may be found in  \cite{Li} (e.g.\ Section 5.3 there), and in places in the proof of Theorem 5.3 in \cite{BR}), or have already been discussed in the proof of Theorem \ref{DK643} above.  Any other aspects of the real case of classical results about the spectrum $\hat{A}$ in early chapters of \cite{Dix} which we need here are easy to check and left to the interested reader.  \end{proof}

We end this section with a result continuing the earlier theme that  `maximal' elements behave perfectly with respect to ``real versus complex distinctions'' or complexification.

   \begin{thm} \label{pem} For $K$ a complex compact nc convex set and 
$x \in K$, 
then $x$ is complex maximal in $K$ if and only if it is real maximal in $K$.
\end{thm}

\begin{proof}   Certainly every complex maximal is real maximal.    For the other direction, we may suppose that $0 \in K_1$ by translation.
Suppose that  $x \in K_n$ is real maximal, and let $v =[I_n \; 0]^\tran$.  Write $x = \alpha^*y\alpha$ for $y \in K_{m}$ and $\alpha \in M_{m,n}(\bC)$ an isometry.   
We may write $\alpha = \beta + i \lambda$ with $\beta, \lambda \in M_{m,n}(\bR)$. 
Taking $c$ of both sides of this and compressing both sides by $u_n$ gives 
$x= u_n^* c(\alpha)^\tran c(y) c(\alpha) u_n$.   Let $w = c((1-i)/\sqrt{2})$, a projection having $u$ as eigenvector with eigenvalue $\frac{1}{\sqrt{2}}$.  It is easy to see that $c(\alpha) u_n = w_m c(\alpha) v$.   Let $\gamma = c(\alpha) v = [\beta \; \lambda]^\tran$, so that
$c(\alpha) u_n = w_m \gamma$.   We have 
$\gamma^\tran \gamma = \beta^\tran \beta + \lambda^\tran \lambda = I_n$, since   $$I_n = \alpha^*  \alpha ={\rm Re} \,  (\beta^\tran - i \lambda^\tran) (\beta + i \lambda) \, = \, \beta^\tran \beta + \lambda^\tran \lambda.$$
Let $z = w_m^* c(y) w_m = w_m^* c(y) w_m + q 0 q \in K$, where $q$ is the complementary projection to $w_n$. 
Then  $$x = u_n^* c(\alpha)^\tran c(y) c(\alpha) u_n =  \gamma^\tran w_m^*  c(y)  w_m \gamma = \gamma^\tran z \gamma.$$
Since $x$ is real maximal we get that  the projection $$P = \gamma \gamma^\tran =  c(\alpha) \begin{bmatrix}
                    1_n & 0 \\ 0 & 0
                \end{bmatrix}c(\alpha)^\tran$$ commutes with $z$.          Therefore, $P(w_m c(y) w_m)= (w_m c(y)w_m) P$ and adjoining $u_m$ to this gives
                \[u_m^*c(\alpha) \begin{bmatrix}
                    1_n & 0 \\ 0 & 0
                \end{bmatrix}c(\alpha)^\tran w_m c(y) w_m u_m = u_m^*w_m c(y) w_m c(\alpha) \begin{bmatrix}
                    1_n & 0 \\ 0 & 0
                \end{bmatrix}c(\alpha)^\tran u_m.\]
                As in \cite[Lemma 3.8]{BMc}   for $c(a)$ in either $M_{2m,2k}(\bC)$ or $M_{2m,2k}(K)$ we have that $u_m^* c(a) = au_k^*$ and similarly $c(a) u_k = u_m a$. 
                Thus $$u_m^* \, c(\alpha) = \alpha \, u_n^* \; , \; \; \; \; \; \; c(\alpha)^\tran \,  u = u \, \alpha^*.$$  Recall that $w u = \frac{1}{\sqrt{2}} u$, and that $c(y) = y \oplus y$ `commutes with' block matrices whose blocks are scalar multiples of
                $I_m$, so that $$w_m c(y) w_m u_m =  \frac{1}{\sqrt{2}}  \, w_m  c(y)  u_m =  \frac{1}{2} \,  u_m \, y.$$ Using these two displayed equations, the left hand side of the displayed equation above  those, turns into
                         \begin{align*}
                u_m^*c(\alpha) \begin{bmatrix}
                    1_n & 0 \\ 0 & 0
                \end{bmatrix}c(\alpha)^\tran w_m c(y) w_m u_m  & = \frac{1}{2} \, \alpha u_n^* \begin{bmatrix}
                    1_n & 0 \\ 0 & 0
                \end{bmatrix}c(\alpha)^\tran u_m y \\
                                    &= \frac{1}{2} \, \alpha u_n^*  \begin{bmatrix}
                    1_n & 0 \\ 0 & 0
                \end{bmatrix} u_n \alpha^*                  y \\
                                & =     \frac{1}{4} \, \alpha \begin{bmatrix} 
                    1_n & i 1_n 
                \end{bmatrix}
                \begin{bmatrix}
                    1_n & 0 \\ 0 & 0 
                \end{bmatrix}
                \begin{bmatrix}
                    1_n \\ -i 1_n 
                \end{bmatrix} \alpha^* y  
                \\&= \frac{1}{2} \, \alpha \alpha^* y, 
                \end{align*}
                and similarly the right hand side turns into $\frac{1}{2} y \alpha \alpha^*$ and therefore
                \[\alpha \alpha^*y = y \alpha \alpha^*.\] 
                So, $y$ decomposes with respect to the range of $\alpha$ as a direct sum. \end{proof}

 \section{Complexification of  extremals}    \label{crco}  In this section we
        examine the connections between 
        nc extreme, pure, and maximal points in the real and complex case, that is, how 
        these notions interact with the complexification of a nc convex set.

Classically pure (or equivalently, classical extreme \cite[Proposition 5.3.5 (1)]{Li}) elements 
 in $K_1$, are clearly just the nc pure states (in both the real and complex case). For example, if a state $\varphi$ is  nc pure and 
 is a convex combination of other states, then it is easy to see that these states must be $\varphi$. 
 However they are not necessarily nc  extreme/maximal. An example is the obvious real pure state on $\bC$, which is not maximal.

 We shall see later that a (scalar valued) state
 $\varphi$ on a real operator system $V$ is  nc extreme if and only if
 $\varphi_c$ is complex nc extreme on $V_c$. 
 Similarly we shall see that, in the language above, for every pure element  $x \in K_1$ there exists $y$ with $c(x,y) \in K_2$ and $x+iy$ pure in $K_c$ (which follows if  $c(x,y)$ is pure in $K_2$).

          \begin{lemma} \label{pekkc} For $K$ a real compact nc convex set, if
$x \in K$  is pure (resp.\ nc extreme) in $K_c$ then $x$  is pure (resp.\ nc extreme)
in $K$.    The converse is false:   pure (resp.\ nc extreme) 
in $K$ need not imply pure (resp.\ nc extreme)
in $K_c$.   In particular, in general
    $$K \cap \partial_{\bC} K_c  \subsetneq \partial_{\bR} K.$$ 
\end{lemma}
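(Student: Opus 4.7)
For the forward direction (purity in $K_c$ implies purity in $K$, and likewise for extremity), I would start with a real nc convex decomposition $x = \sum_i \alpha_i^\tran x_i \alpha_i$ of $x \in K_n$, where $x_i \in K_{n_i}$ and $\alpha_i \in M_{n_i,n}(\bR)$ satisfy $\sum_i \alpha_i^\tran \alpha_i = I_n$. Since the inclusion $K \hookrightarrow K_c$ is respected by nc convex combinations, this is simultaneously a complex nc convex decomposition in $K_c$. Applying purity in $K_c$ gives $\alpha_i = c_i \beta_i$ with $c_i > 0$ and $\beta_i$ a complex isometry, and $\beta_i^* x_i \beta_i = x$. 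Since $\alpha_i$ and $c_i$ are real, $\beta_i = \alpha_i/c_i$ is automatically a real isometry, yielding purity in $K$. For the extreme case, the additional condition $x_i \beta_i \beta_i^* = \beta_i \beta_i^* x_i$ is a commutation of real matrices, so transfers verbatim between the complex and real settings.

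For the strict inclusion, I would exhibit a single counterexample that simultaneously fails both purity and extremity in $K_c$. Take $V = \bC_r$, $K = \ncS(V)$, and let $x : \bC_r \to M_2(\bR)$ be given by $x(a + bj) = aI + bJ$ with $J = \begin{bmatrix} 0 & -1 \\ 1 & 0 \end{bmatrix}$. Example \ref{marh} places $x$ in the real nc Choquet boundary $\partial_\bR K$, so $x$ is extreme (and hence pure) in $K$. To see $x$ is not pure in $K_c$, I would identify $\bC_c \cong \bC \oplus \bC$ explicitly via the orthogonal central idempotents $e_\pm = (1 \pm ij)/2$ (where $i$ is the complex scalar of $\bC_c$ and $j$ is the imaginary unit of $\bC_r$). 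The complex-linear extension $\tilde{x} : \bC_c \to M_2(\bC)$ of $x$ sends $e_\pm$ to $(I \pm iJ)/2$, which a direct check shows are two commuting rank-one projections in $M_2(\bC)$. Thus $\tilde{x}$ is a $*$-homomorphism whose image is a commutative $2$-dimensional subalgebra of $M_2(\bC)$; in particular $(I + iJ)/2$ is a non-scalar element of the commutant of $\tilde{x}(\bC_c)$, so $\tilde{x}$ is reducible. Since pure elements of $K_c$ are irreducible (as noted just before Proposition \ref{dk614}), $\tilde{x}$ is neither pure nor extreme in $K_c$, proving the strict inclusion $K \cap \partial_\bC K_c \subsetneq \partial_\bR K$.

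The forward direction is essentially bookkeeping, so the main obstacle is the concrete verification in the counterexample: decomposing $\bC_c$ as $\bC \oplus \bC$, checking that $(I \pm iJ)/2$ are indeed rank-one projections, and confirming reducibility of the image. Once this is in place, both the purity and the extremity failure follow at once, so a single example handles both parts of the lemma.
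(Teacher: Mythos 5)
Your argument is correct and follows essentially the same route as the paper: the forward direction is the identical observation that a real nc convex decomposition is a complex one and that a positive real multiple of a complex isometry with real entries is a real isometry, and your counterexample is precisely the paper's Example \ref{cise} (the standard representation $c:\bC_r\to M_2(\bR)$, shown to be complex-reducible via the projections $(I\pm iJ)/2$), which the paper's proof explicitly points to alongside two other examples.
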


            \begin{proof}  Suppose that  $x$  is pure in $K_c$, and that $x =  \sum_k \alpha_k^\tran x_k \alpha_k \in K_n$ for finite families $\{x_i \in K_{n_i}\}$ and $\{\alpha_i \in M_{n_i,n}(\bR) \}$, where $\sum \alpha_i^\tran \alpha_i = 1_n$.  Then since 
 $x$  is pure in $K_c$ we have each $\alpha_i$ is a positive scalar multiple of a complex isometry matrix
 $\beta_k \in M_{n_k,n}(\bC)$ (in particular $n_k \geq n$),
 with $\beta_k^* x_k \beta_k = x$.  Clearly $\beta_k \in M_{n_k,n}(\bR)$.
 So we have verified that $x$  is pure in $K$. 
 Note further that $\beta_k \beta_k^* x_k  \beta_k \beta_k^* = \beta_k x \beta_k^\tran$.  

  If $x$ is nc extreme in $K_c$ then in addition each 
 $x_k$  commutes with $\alpha_k \, \alpha_k^\tran$. 
 
 The converses of these are false even if $K$ corresponds to 
 a commutative real $C^*$-algebra \cite{RSS}.  For example on $A = \{ f : \bT \to \bC : f(\bar{z}) = \overline{f(z)} \}$ the state 
 $f \mapsto {\rm Re} \, f(i)$ is classically pure and extreme
 in $S(A)$, thus is nc pure,
  but is not pure nor extreme in any reasonable sense in 
 $K_c$.   Also consider the restriction of the  trace on $M_2(\bC)$ to the copy of the quaternions. 
 See also Example  \ref{cise}. \end{proof}

  \begin{cor} \label{chekkc} For $K$ a real compact nc convex set, if  $x \in K_n$ is nc extreme in $K$ then  $x$  is  nc extreme in $K_c$
  if and only if $x$ is complex-irreducible in $M_n(\bC)$, or equivalently every nonzero positive in $M_n(\bC)$  commuting with $x$ is
  a positive scalar multiple of $I_n$.   
  In particular, $x \in K_1$ is nc extreme in $K$ if and only if $x + i0$ is nc extreme in $K_c$.
  \end{cor}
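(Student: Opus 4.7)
The plan is to reduce to three results already established: extreme equals pure plus maximal (Proposition \ref{dk614}); maximality commutes with $x \mapsto x+i0$ (Corollary \ref{maxiff}); and extremality is characterized by irreducibility of the representing map together with uniqueness (Theorem \ref{DK619}).

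Fixing $x \in K_n$ extreme in $K$, Proposition \ref{dk614} gives that $x$ is both pure and maximal in $K$. Corollary \ref{maxiff} then promotes maximality to $x+i0 \in K_c$, and the complex analogue of Corollary \ref{sofoll} says $\delta_{x+i0} : C(K_c) \to M_n(\bC)$ is the unique representing map for $x+i0$; Theorem \ref{dmar} further tells us it is a $*$-representation. Applying the complex version of Theorem \ref{DK619} to $K_c$, one concludes that $x+i0$ is extreme in $K_c$ if and only if $\delta_{x+i0}$ is irreducible in $M_n(\bC)$, which is precisely the assertion that $x + i0$ (hence $x$, since they coincide as elements of $M_n(E_c)$ for $E = A(K)$ and so have the same commutant in $M_n(\bC)$) is complex-irreducible. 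The reformulation ``every nonzero positive in $M_n(\bC)$ commuting with $x$ is a positive scalar multiple of $I_n$'' follows from the facts that the commutant of $x$ in $M_n(\bC)$ is a $*$-subalgebra spanned by its positive elements, and that the selfadjoint scalars in $M_n(\bC)$ are exactly $\bR \cdot I_n$.

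For the ``in particular'' statement, when $n = 1$ complex-irreducibility is automatic, since every element of the commutative algebra $M_1(\bC)$ is trivially a scalar multiple of $I_1$. Hence the forward direction of the main equivalence gives that $x$ extreme in $K$ implies $x+i0$ extreme in $K_c$, while the reverse direction is contained in Lemma \ref{pekkc}. I do not anticipate serious obstacles; the main point requiring some care is simply that the complex case of Theorem \ref{DK619} and the auxiliary statements cited transfer cleanly to $K_c$ as a complex compact nc convex set, and that $\delta_{x+i0}$ aligns with the natural complexification of $\delta_x$ (so that its irreducibility in $M_n(\bC)$ is exactly the notion of complex-irreducibility used in the text). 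Both are direct unfoldings of the complexification framework built in \cite{BMc} and in Section \ref{rnc}.
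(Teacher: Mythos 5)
Your argument is correct, but it takes a genuinely different route from the paper's. The paper proves the equivalence by hand: starting (as you do, via Proposition \ref{dk614} and Corollary \ref{maxiff}) from maximality of $x$ in $K_c$, it takes an arbitrary nc convex combination $x = \sum_k \alpha_k^* x_k \alpha_k$ in $(K_c)_n$, uses maximality to show each $x_k$ commutes with $\alpha_k\alpha_k^*$ and that $x = \sum_k r_k^{1/2} x r_k^{1/2}$ with $r_k = \alpha_k^*\alpha_k$ positive and commuting with $x$, and thereby reduces extremality of $x$ in $K_c$ to the condition that every positive in $M_n(\bC)$ commuting with $x$ is a positive scalar multiple of $I_n$; the observation that the commutant is a von Neumann algebra, trivial iff it has no nontrivial projections, then yields the irreducibility formulation. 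You instead outsource this computation to the complex case of Theorem \ref{DK619} (i.e.\ \cite[Theorem 6.1.9]{DK}): since $x+i0$ is maximal in $K_c$ it has $\delta_{x+i0}$ as its unique representing map by the complex Corollary \ref{sofoll}, so extremality in $K_c$ reduces to irreducibility of $\delta_{x+i0}$, and this is exactly complex-irreducibility of $x$ because a selfadjoint set and the $C^*$-algebra it generates have the same commutant in $M_n(\bC)$. Both arguments are sound; yours is shorter and makes the structural reason for the result transparent, at the cost of invoking the (nontrivial) complex Theorem 6.1.9, while the paper's version is self-contained at the level of the definitions and exhibits explicitly the positive-commutant reduction, which is the form of the criterion actually tested later in Example \ref{cise}. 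Your handling of the $n=1$ assertion (automatic complex-irreducibility plus Lemma \ref{pekkc} for the converse) coincides with the paper's.
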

    
                \begin{proof}
                If $x$ is extreme  in $K$
 then $x$ is maximal in $K_c$.  Suppose that $x =  \sum_k \alpha_k^* x_k \alpha_k \in (K_c)_n$ for finite families $\{x_i \in (K_c)_{n_i}\}$ and $\{\alpha_i \in M_{n_i,n}(\bC)\}$, where $\sum \alpha_k^* \alpha_k = 1_n$. 
 If $\alpha = [\alpha_i]$ (a column) then  $\vec \alpha \vec \alpha^*$ commutes with diag$ \{ x_k \}$ (using $x$ is maximal in $K_c$).  That is, 
 $\alpha_i \alpha^*_j x_j = x_i \alpha_i \alpha^*_j$ for each $i,j$.  In particular this holds when $i=j$, which is one ingredient 
 of the extremality condition.  For the other ingredients, 
 letting $r_k = \alpha_k^* \alpha_k$ we have
 $r_i \alpha^*_j x_j \alpha_j =  \alpha^*_i x_i \alpha_i  r_j$.
 Summing over $i$ we have $\alpha^*_j x_j \alpha_j = x r_j$.
 Similarly this equals $r_j x$, so $\alpha^*_j x_j \alpha_j = r_j^{\frac{1}{2}} x r_j^{\frac{1}{2}}$ for each $j$.  So $x = \sum_k \, r_k^{\frac{1}{2}} x r_k^{\frac{1}{2}}$.

From the last paragraph we have reduced checking that  $x$  
 is  extreme in $K_c$ to the special case:   if $x = \sum_k \, a_k x a_k$
 for positive  $a_k \in M_n(\bC)$ with $\sum_k \, a_k^2 = 1$
 and $a_k x = x a_k$, then 
 $a_k$ is  a positive scalar multiple of $I_n$. 
 That is, for positive  $b_1, \cdots , b_m  \in M_n(\bC)$ with $\sum_k \, b_k = 1$
 and $b_k x = x b_k$, then $b_k$ is  a positive scalar multiple of $I_n$. 
 It is easy to see that in this last condition we may assume that $m = 2$.   
 If this holds with $m = 2$, and if 
 $b$  is positive in $M_n(\bC)$  and $b$ commutes with $x$, then $\| b \| I-b$
 satisfies the same conditions.  So $b$
 is a positive scalar multiple of $I_n$.   Thus  $x$  
 is  extreme in $K_c$   if and only if   every nonzero positive in $M_n(\bC)$  commuting with $x$ is
  a positive scalar multiple of $I_n$. 
 Now the result is clear.  Indeed the commutant in $M_n(\bC)$ of the range of $x$ is a
 von Neumann algebra, so is trivial if and only if it contains no
 nontrivial projections.  

This  criterion is true for $n = 1$ clearly, giving (together with the last lemma) the final assertion. \end{proof} 

{\bf Remarks.}  1)\ 
The class of complex-irreducible nc extreme or pure real matrix states is in general not big enough to norm an operator system or even a real $C^*$-algebra.
Indeed for  $\bC_r$ there are no complex-irreducible extreme real matrix states (as is said elsewhere the irreducible real matrix states  of $\bC_r$ are all unitarily equivalent to the one in Example \ref{cise}, and none of these are complex-irreducible).  

 \smallskip  
 
         2)\        It is worth remarking that 
               any complex compact nc convex set that is not a complexification, is probably in a sense not easy to describe.  Indeed such would give a 
               complex operator system that is not an  operator system complexification, and there are no 
                known simple examples of this at this date (we thank Chris Phillips for a recent conversation on this point).  For $C^*$-algebras this is equivalent to the hard problem of the existence of a real form (see e.g.\ first page or two of \cite{Ros}). 
                There are not even any known simple low dimensional examples of 
complex Banach spaces that are not  (isometric) complexifications.

\begin{prop} 
\label{finl}  
A complex ucp map $\varphi : V \to M_n(\bC)$ on a complex operator system is complex maximal in $K = \ncS_{\bC}(V)$ if $c_n \circ \varphi$
is real maximal in ${\rm UCP}_{\bR}(V,M_{2n}(\bR))$. 
Here  $c : M_n(\bC) \to M_{2n}(\bR)$ is written as $c_n$. \end{prop}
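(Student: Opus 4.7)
The plan is to show directly that every complex ucp dilation of $\varphi$ is trivial, by pushing it forward under $c_m$ and invoking the real maximality hypothesis on $c_n \circ \varphi$. This mirrors the strategy of the second half of the proof of Theorem~\ref{cmax}, but runs in the opposite direction: here I start with a complex dilation and read off triviality from the real side, rather than starting with a real extension and deducing a complex one.

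First I would fix an arbitrary complex ucp dilation $\psi : V \to M_m(\bC)$ of $\varphi$, so $\varphi = \alpha^* \psi \alpha$ for some complex isometry $\alpha \in M_{m,n}(\bC)$. Applying the (possibly rectangular) faithful $*$-homomorphism $c$, which respects adjoints via $c(A^*) = c(A)^\tran$ and respects products whenever dimensions match, one obtains $c_n \circ \varphi = c(\alpha)^\tran (c_m \circ \psi)\, c(\alpha)$. Since $c(\alpha)^\tran c(\alpha) = c(\alpha^*\alpha) = c(I_n) = I_{2n}$, the matrix $c(\alpha) \in M_{2m,2n}(\bR)$ is a genuine real isometry, and hence $c_m \circ \psi$ is a real ucp dilation of $c_n \circ \varphi$ in the sense of Definition~\ref{dkmax}.

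By the real maximality hypothesis on $c_n \circ \varphi$, this real dilation must be trivial, meaning that for every $v \in V$ the element $c_m(\psi(v))$ commutes with $c(\alpha)c(\alpha)^\tran = c(\alpha\alpha^*)$. Unpacking this equality and using multiplicativity of $c$ gives $c(\psi(v)\alpha\alpha^*) = c(\alpha\alpha^*\psi(v))$; since $c$ is injective, one concludes that $\psi(v)$ commutes with $\alpha\alpha^*$ for every $v \in V$. Thus the complex dilation $\psi$ is trivial, and since $\psi$ was arbitrary, $\varphi$ is complex maximal.

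The main obstacle is essentially cosmetic: one has to confirm the bookkeeping with rectangular $c$-maps (that complex isometries map to real isometries, that the compression identity $c(\alpha^*\psi\alpha) = c(\alpha)^\tran c(\psi)c(\alpha)$ holds for rectangular dimensions, and that $c$ remains injective at each matrix size used). Unlike the converse direction in Theorem~\ref{cmax}, one does \emph{not} need an averaging trick with the unitary $W$, since there is no requirement to coerce an arbitrary real ucp extension back into the form $c(\cdot)$; we merely push a single complex dilation forward into the real world, where maximality is assumed outright.
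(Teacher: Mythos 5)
Your proof is correct and is essentially the paper's own argument: apply $c$ to the dilation identity, observe that $c(\alpha)$ is a real isometry so that $c_m\circ\psi$ genuinely dilates $c_n\circ\varphi$ in the real sense, invoke the real maximality hypothesis, and pull the resulting commutation relation back through the injective multiplicative map $c$. The only (minor, and favorable) difference is that you handle an arbitrary complex isometry $\alpha$ directly, whereas the paper writes the dilation with a real isometry $\alpha\in M_{m,n}(\bR)\subset M_{m,n}(\bC)$.
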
 

\begin{proof} 
Suppose that $\varphi = \alpha^* \psi  \alpha$ for $\psi \in {\rm UCP}_{\bC}(V,M_m(\bC))$ and 
an isometry $\alpha \in 
M_{m,n}(\bC)$. View $\psi$ as in ${\rm UCP}_{\bR}(V,M_{2m}(\bR))$ and $c_n \circ \varphi$ in ${\rm UCP}_{\bR}(V,M_{2n}(\bR))$.
Then $c_n \circ \varphi = c(\alpha)^\tran \, (c_m \circ \psi) \, c(\alpha)$.   Since $c_n \circ \varphi$ is real 
maximal in $\ncS_{\bR}(V)$ we have that $c(\alpha ) c(\alpha )^\tran = c(\alpha \alpha^* + i0)$ commutes with $c_m \circ \psi$.  However this forces $\alpha \alpha^*$ to commute with $\psi$ as desired.  \end{proof}

If $x$ is pure in $K$ then it need not be real nor complex pure  in $K_c$ (for examples see \ref{cise} and \ref{Cal},
and Lemma \ref{pekkcr}).  
However if $x$ is nc extreme in $K$ then  it is real nc extreme (hence real pure) in $K_c$, 
by a small variant of the proof of Corollary \ref{chekkc}.

\begin{example} \label{cise}  We exhibit a simple example of a nc extreme $x \in K_2$ 
which is not nc extreme in $K_c$.   Indeed the criterion in Corollary \ref{chekkc} is not true in general for $n = 2$.
For $A = \bC$  consider the usual representation $c : A \to M_2(\bR)$.  This is a real irreducible $*$-representation, hence is nc extreme and pure 
by 
Proposition \ref{DK617} (2).
However
if we take 
$$b_1 = \frac{1}{2} \begin{bmatrix}
                    1 & -i \\ i & 1
                \end{bmatrix}, \; \; \;  \; \; \;
               b_2 = \frac{1}{2} \begin{bmatrix}
                    1 & -i \\ i & 1
                \end{bmatrix} ,$$ 
then $c$ commutes with the $b_i$, and $\sum_k \, b_k = 1$.
So the criterion in Corollary \ref{chekkc}  fails.  
\end{example}

 For the next two examples, we use the real compact nc convex set
  \[K = \ncconv\{c(i),c(-i)\} = \ncconv \left\{ \begin{bmatrix}
            0 & -1 \\ 1 & 0
        \end{bmatrix}  \right\} \subset \cM(\bR).\]
In Example \ref{marh} we observed that $K$ may be viewed as the nc state space of $\bC_r$,  and that $K$  is the skew matrix set ${\mathcal CS}$.  So $A(K) \cong \bC_r$ as operator systems.
           
    \begin{example}\label{rOpInterval}
     Let $I = \ncconv_\bR\{\pm1\}$. This equals the  compact real operator interval $\bigsqcup_n[-1,1]_n$. To see this, observe that $\subseteq$ is obvious and for the converse we observe that $ \ncconv_\bR\{\pm1\} =  \ncconv_\bR\{[-1,1]\}$. Elements of the compact operator interval can all be written as the diagonalization $U^\tran D U$ for $D$ the direct sum of elements in the interval $[-1,1]$. Thus, all elements in the operator interval are in $I$, which gives $\supseteq$.
    
     Elements of $A(I)$ are determined by where they send $\pm 1$. Therefore, $A(I)$ is a two dimensional real operator system, indeed it is a real $C^*$-algebra $*$-isomorphic to $\ell^\infty_2(\bR)$. The isomorphism is the map $\theta:A(I) \to \ell^\infty_2(\bR)$ taking $f \in A(I)$ to $(f(-1),f(1))$. Note that $A(I)$ has trivial involution, so all we need to check is that $\theta$ is multiplicative. However,
        \[\theta(fg) = ((fg)(-1), (fg)(1)) = (f(-1) \cdot g(-1), f(1) \cdot g(1)) = \theta(f) \cdot \theta(g).\]
        Because $\theta$ is bijective, this is a $*$-isomorphism.
Although both $I$ and ${\mathcal CS} = K$ have the same complexification they are not nc affine homeomorphic, indeed their associated operator systems are $\ell^\infty_2(\bR)$ and $\bC_r$, which are quite different.  Indeed although $I = {\rm Min}([-1,1])$  in the language of \cite[Section 6]{BMc},   $K$ is not a Min nc convex set.  Indeed $A(K) = \bC_r$ is not an real OMIN operator system: it has nontrivial involution and is not a real function system in the technical sense of \cite{BR,BMc}.
\end{example}

We now explore $K$ and its complexification $K_c$ in detail, and use it to construct another example of a pure element $x \in K$ such that $x+i0$ is not pure in $K_c$.
 
   \begin{example} \label{Cal}   Consider $K$ as defined above. From Theorem $3.1$ in  \cite{BMc}, we know that $K_c = \ncconv_{\bC}\{\ncconv_\bR\{c(i)\}\} = \ncconv_\bC\{c(i)\}$. A compression by $u$ shows $\pm i \in K_c$ so that $\ncconv_\bC\{\pm i\} \subseteq K_c$. Notice that we have
         $$c(i) =\frac{1}{2} \begin{bmatrix}
            1 \\i
        \end{bmatrix} i \begin{bmatrix}
            1 & -i
        \end{bmatrix} + \frac{1}{2}\begin{bmatrix}
            1 \\-i
        \end{bmatrix} (-i) \begin{bmatrix}
            1 & i
        \end{bmatrix}, $$
        so that $K_c = \ncconv_
        \bC\{\pm i\}$. Let $L = \ncconv_{\bC} \{\pm 1\}$,  the operator interval $\{ x \in {\mathcal M}(\bC)_{\rm sa} : -I \leq x \leq I \}$.   Because $\pm i \in i \cdot L$ we have that $K_c \subseteq iL$. Conversely, $\pm 1 \in -i \cdot \ncconv \{\pm i\}$ so that $L \subseteq -i K_c$.  This  implies $iL \subseteq K_c$ and thus $iL = K_c$. However, $L = \ncconv\{\pm 1\} = \bigsqcup_n [-1,1]_n$ by Example $4.1$ in \cite{WW} or as can be deduced from Example \ref{rOpInterval}. Also, $iL \cong L$ because multiplication by $i$ is a nc affine homeomorphism. Now, $L$ is compact, implying that $K_c$ is too, and thus $K$ is compact by Theorem 3.1. We have that $\pm1 \in L$ are the only nc extreme points also by Example $4.1$ in \cite{WW}, so that $\pm c(i)$ are the only possible extreme points in $K$, and they are indeed extreme. Also, $0 \in K_c$ is not pure as it is a convex combination of $\pm i$.

        Next we see that $0 \in K$ is pure: notice that for any matrix $v \in M_{2,1}(\bR)$ we have that $v^\tran c(i) v = 0$. If $0$ were written as a finite nc convex combination of points in $K$, say $0 = \sum_i \alpha_i^\tran x_i \alpha_i$ for $x_i \in K_{n_i}$ and non-zero $\alpha_i \in M_{n_i,1}(\bR)$, then each $\alpha_i$ is automatically a positive scalar multiple of an isometry so we just need to show $\alpha_i^\tran x_i \alpha_i = 0$. However, each $x_i$ can be written as a nc convex combination of $c(i)$, say $x_i = \sum_j \beta_{i,j}^\tran c(i)\beta_{i,j}$. Therefore, because $v^\tran c(i) v = 0$ for any row vector $v$, we get
        \[\alpha_i^\tran x_i \alpha_i = \sum_{j} (\beta_{i,j}\alpha_i)^\tran c(i) \beta_{i,j} \alpha_i = 0,\] and so $0$ is pure in $K$. It is worth noting that $0$ is not however maximal.
Similar arguments work for the example
        $K = \ncconv\{1, c(i) \}.$   
 
        \end{example}

Extreme in $K$ need not imply complex extreme
in $K_c$ as we just saw.  However, the following is a kind of a converse.
    
     \begin{lemma} \label{pekkcc} For $K$ a real compact nc convex set, if  $x \in K_c$ has $c(x)$ pure (resp.\ nc extreme, maximal) in $K$ then $x$  is pure (resp.\ nc extreme, maximal) in $K_c$.            \end{lemma}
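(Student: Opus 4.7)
The plan is to pull back a complex nc convex decomposition in $K_c$ to a real nc convex decomposition of $c(x)$ in $K$ via the $*$-homomorphism $c$, apply the hypothesis on $c(x)$ there, and then use injectivity and multiplicativity of $c$ to push the conclusion back to $K_c$. The key technical fact is that on matrix algebras $c$ is a faithful $*$-homomorphism, so $c(a^*b) = c(a)^\tran c(b)$, $c(1) = 1$, and $c(a) = 0$ forces $a=0$.

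For the pure case, suppose $x \in (K_c)_n$ and $c(x)$ is pure in $K$. Write $x = \sum_k \alpha_k^* x_k \alpha_k$ with $x_k \in (K_c)_{n_k}$, nonzero $\alpha_k \in M_{n_k,n}(\bC)$, and $\sum_k \alpha_k^* \alpha_k = 1_n$. Applying $c$ termwise gives
\[
c(x) \;=\; \sum_k c(\alpha_k)^\tran c(x_k) c(\alpha_k),
\]
with $c(x_k) \in K_{2n_k}$, nonzero $c(\alpha_k) \in M_{2n_k, 2n}(\bR)$, and $\sum_k c(\alpha_k)^\tran c(\alpha_k) = c(1_n) = 1_{2n}$. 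This is a real nc convex combination, so purity of $c(x)$ yields $c(\alpha_k) = c_k \gamma_k$ for a real isometry $\gamma_k$ and $c_k > 0$ with $\gamma_k^\tran c(x_k) \gamma_k = c(x)$. Then
\[
c(\alpha_k^* \alpha_k) \;=\; c(\alpha_k)^\tran c(\alpha_k) \;=\; c_k^2\, I_{2n} \;=\; c(c_k^2 I_n),
\]
so by injectivity $\alpha_k^* \alpha_k = c_k^2 I_n$, i.e.\ $\alpha_k = c_k \beta_k$ for a complex isometry $\beta_k \in M_{n_k,n}(\bC)$. Likewise
\[
c(\beta_k^* x_k \beta_k) \;=\; \tfrac{1}{c_k^2} c(\alpha_k)^\tran c(x_k) c(\alpha_k) \;=\; \gamma_k^\tran c(x_k) \gamma_k \;=\; c(x),
\]
so again by injectivity $\beta_k^* x_k \beta_k = x$. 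Hence $x$ is pure in $K_c$.

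For the extreme case, assume $c(x)$ is moreover extreme in $K$. Under the notation above, we additionally obtain that $c(x_k)$ commutes with $\gamma_k \gamma_k^\tran$. Since $c(\alpha_k) = c_k \gamma_k$, one gets $\gamma_k \gamma_k^\tran = c_k^{-2}\, c(\alpha_k)c(\alpha_k)^\tran = c(\beta_k \beta_k^*)$, so $c(x_k)$ commutes with $c(\beta_k \beta_k^*)$. By injectivity of $c$ (and the fact that $c$ preserves products), $x_k$ commutes with $\beta_k \beta_k^*$. Combined with the pure conclusion this shows $x$ is extreme in $K_c$. For the maximal case, if $x = \alpha^* y \alpha$ with $\alpha \in M_{m,n}(\bC)$ a complex isometry and $y \in (K_c)_m$, then $c(x) = c(\alpha)^\tran c(y) c(\alpha)$ exhibits $c(y)$ as a dilation of $c(x)$ along the real isometry $c(\alpha)$. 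Maximality of $c(x)$ gives that $c(y)$ commutes with $c(\alpha)c(\alpha)^\tran = c(\alpha \alpha^*)$, and injectivity of $c$ then yields that $y$ commutes with $\alpha \alpha^*$, so the dilation is trivial and $x$ is maximal in $K_c$.

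The only subtle step is the passage from $c(\alpha_k) = c_k \gamma_k$ to the fact that $\alpha_k$ itself is a positive scalar multiple of a complex isometry; this is handled by computing $\alpha_k^* \alpha_k$ via $c$ and invoking injectivity. Everything else is routine bookkeeping using that $c$ is a unital $*$-homomorphism on matrix algebras.
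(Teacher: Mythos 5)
Your proposal is correct and follows essentially the same route as the paper: apply the $*$-homomorphism $c$ to a complex nc convex decomposition (or dilation) of $x$ to get a real one for $c(x)$, invoke the hypothesis there, and pull the conclusion back using that $c$ is injective and modular. The only cosmetic differences are that the paper delegates the maximal case to its Theorem~\ref{cmax} and leaves the ``$\alpha_k$ is a positive multiple of a complex isometry'' step as an ``easy to see,'' which you fill in explicitly via the computation of $c(\alpha_k^*\alpha_k)$.
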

    
                \begin{proof} The maximal case was already proved in Theorem \ref{cmax}. Suppose that $x =  \sum_k \alpha_k^* x_k \alpha_k \in (K_c)_n$ for finite families $\{x_i \in (K_c)_{n_i}\}$ and $\{\alpha_i \in M_{n_i,n}(\bC)\}$, where $\sum \alpha_k^* \alpha_k = 1_n$.  Then 
                $$c(x) =  c(\sum_k \, \alpha_k^* x_k \alpha_k) 
                =  \sum_k \, c(\alpha_k)^\tran c(x_k) c(\alpha_k) \in K_{2n},$$ and $\sum \, c(\alpha_k)^\tran c(\alpha_k) = 1_{2n}.$ 
                Since 
     $c(x)$  is pure in $K$, we have that a positive scalar multiple of $c(\alpha_k)$ is a  real isometry matrix
     $\eta_k \in M_{2n_k,2n}(\bR)$,
     with $\eta_k^\tran c(x_k) \eta_k = c(x)$.  
     It is then easy to see (since $c$ is one-to-one and `modular') that a positive scalar multiple
     of $\alpha_k$ is a complex isometry $\beta_k \in M_{n_k,n}(\bC)$,
     with $\beta_k^* x_k\beta_k = x$. So $x$  is pure in $K_c$.      The rest follows since `extreme' is `pure and maximal'. 
     \end{proof}

         The converse of the lemma is false even in the simplest example of $K$ singleton: $1$ is pure and 
         extreme in $\ncS(\bC)$ but 
    $c(1) = 1 \oplus 1$ is not pure, and is clearly not nc extreme.  The class of extreme points considered in this lemma is interesting in some examples, but not in others.

    \bigskip

 {\bf Remarks.} (On real parts of complex matrix states.) 
 1)\ By the argument in \cite[Proposition 5.3.3]{Li}, every pure state
 $\varphi$ on a real operator system $V$ is the restriction to $V$ of the real part of a pure state on $V_c$ (not usually $\varphi_c$). 
   However not every pure (resp.\ nc extreme) element  $x \in K$ is the real part of a pure (resp.\ nc extreme) element of $K_c$, sadly.  See \cite{RSS}, or 2) below, or Example \ref{cise}.  
    In the latter example, since $\ell^\infty_2$ has no irreducible representation into $M_n(\bC)$ for $n \geq 2$ we have no extreme points in level 2 (or higher) of the complexification.  Similarly for pure points: in that 
example there are no pure elements of $K_2 = {\rm UCP}(\bC,M_2(\bC))$ by 6.1.7 in \cite{DK}. 

The  classical argument that  pure/extreme states are the real part of a   pure state extension to the complexification  is a standard one in convexity (finding an extreme point of the set of extensions and showing that this is extreme in the superspace).   That the  nc extension of this fails illustrates the failure in the nc situation of this very common trick in convexity.
We do not know  a characterization of when a nc pure (resp.\ nc extreme) in K is the real part of some  pure (resp.\ nc  extreme)  in $K_c.$

2)\ (A quaternionic counterexample.)\ Similarly the identity map on $M_2(\bC)$ is nc extreme, so maximal.  However its real part $\psi$, which is Re,  restricted to the copy of the quaternions $\bH$, is not real maximal, so not extreme.  This can be seen from  Proposition \ref{DK617} (2), or by the following direct argument: $\psi$ is the compression to the first two rows and columns of the usual (irreducible)  representation of $\bH$ in $M_4(\bR)$.  So it is not maximal by Proposition \ref{DK617} (2).  It is interesting that 
 $\psi$ is pure, by Proposition \ref{DK617} (1). 
 (Cf.\ \cite[Example 3.27]{RSS}.)

 The latter also answers the question: For a real ucp map $\varphi$, there exists a maximal dilation of its complexification. Is the real part of this maximal dilation a maximal dilation of $\varphi$?  In the example above, the complexification of $\psi$ 
 has maximal dilation $x \mapsto x \oplus x$. The real part of this is a real dilation of $\psi$.   
 It is the map $\psi \oplus \psi$.  However  this is not maximal, since a summand of a maximal is easily seen to be maximal, and $\psi$ is not maximal.   
 This also follows by Proposition \ref{DK617} (2). 
 
3)\  It is easy to see that the converse of the first line in the remark fails even for $C^*$-algebras, answering the open question on p.\ 91 of \cite{Li}: if $\varphi$ is a pure state on $V_c$ then Re$ \, \varphi_{|V}$ on $V$ need not be pure.  (Consider $u^* \cdot u$ on $M_2(\bC)$.)  
 In the language above,   $x + iy$ pure in $K_c$ (which follows if
  $c(x,y)$ pure in $K_2$) need not imply $x$ pure in $K$.  
 Nonetheless, the class of such states Re$ \, \varphi_{|V}$ for pure $\varphi$ is an important class of states/an important subset of $K$.

    \bigskip
    
             For $K$ a complex compact nc convex set, one may ask about the relationships between the real and the complex pure/extreme/maximal  points of $K$. 
    We have:   
    
    \begin{lemma} \label{pekkcr}  For $K$ a complex compact nc convex set, if  $x \in K$ is complex pure (resp.\ nc extreme, maximal) in $K$ then $x$  is real pure (resp.\ nc extreme, maximal) in $K$.   
    Conversely, if $x$ is  real nc extreme and complex-irreducible  then $x$  is complex nc extreme and irreducible in $K$.   In particular, in general 
    $$\partial_{\bC} K \subsetneq \partial_{\bR} K.$$ 
    \end{lemma}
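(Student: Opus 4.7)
\emph{Forward direction.} Every real nc convex combination $x = \sum_i \alpha_i^\tran x_i \alpha_i$ with real $\alpha_i \in M_{n_i, n}(\bR)$ is also a complex nc convex combination. Applying the complex purity of $x$ gives $\alpha_i = c_i \beta_i$ for some complex isometry $\beta_i$ and $c_i > 0$; since $\alpha_i$ is real and $c_i$ is a positive real scalar, $\beta_i$ must be real and hence a real isometry, and $\beta_i^* x_i \beta_i = \beta_i^\tran x_i \beta_i = x$. The maximal and extreme versions work by the same principle: any real isometry is a complex isometry, and the commutation condition on $\beta_k \beta_k^*$ (or $\alpha \alpha^*$) coincides with the condition on $\beta_k \beta_k^\tran$ (or $\alpha \alpha^\tran$) when the operator is real. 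This gives all three implications and in particular the containment $\partial_\bC K \subseteq \partial_\bR K$; strictness in general is witnessed by examples such as Example~\ref{cise}.

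\emph{Converse.} The plan is to first upgrade real maximality to complex maximality in $K$, then adapt the proof of Corollary~\ref{chekkc}. Given a complex isometric dilation $x = \alpha^* y \alpha$ with $\alpha \in M_{m,n}(\bC)$ and $y \in K_m$, choose a complex unitary $U \in M_m(\bC)$ such that $U\alpha$ equals the canonical real isometry $\iota := [\,I_n \; 0\,]^\tran \in M_{m,n}(\bR)$; such $U$ exists by extending an orthonormal basis of the range of $\alpha$ to all of $\bC^m$. Setting $y' := U y U^*$, we have $y' \in K_m$ because complex nc convex sets are closed under complex isometric compressions, in particular under complex unitary conjugations. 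Then $x = \alpha^* y \alpha = \iota^\tran y' \iota$ exhibits $x$ as a \emph{real} isometric compression of $y' \in K$. Real maximality of $x$ now forces $\iota\iota^\tran = \mathrm{diag}(I_n, 0)$ to commute with $y'$, so $y' = x \oplus y'_{22}$; conjugating back by $U$ yields $\alpha \alpha^* \, y = y \, \alpha\alpha^*$. Hence $x$ is complex maximal in $K$.

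\emph{Finishing.} With complex maximality established, the argument of Corollary~\ref{chekkc} goes through directly. For any finite complex nc convex combination $x = \sum_k \alpha_k^* x_k \alpha_k$ with $\sum_k \alpha_k^* \alpha_k = I_n$, stack $\vec\alpha := \mathrm{col}_k(\alpha_k)$ (a complex isometry) and $Y := \bigoplus_k x_k \in K$, so $x = \vec\alpha^* Y \vec\alpha$. Complex maximality forces $\vec\alpha \vec\alpha^*$ to commute with $Y$, hence $\alpha_i \alpha_j^* x_j = x_i \alpha_i \alpha_j^*$ for all $i,j$. Letting $r_k := \alpha_k^* \alpha_k$ and manipulating as in Corollary~\ref{chekkc} yields $\alpha_j^* x_j \alpha_j = r_j^{1/2} x r_j^{1/2}$ and $[r_k, x] = 0$. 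Complex-irreducibility of $x$ then forces each $r_k = c_k I_n$ with $c_k > 0$, so $\alpha_k = \sqrt{c_k}\,\beta_k$ with $\beta_k$ a complex isometry; one checks $\beta_k^* x_k \beta_k = x$ and $[x_k, \beta_k \beta_k^*] = 0$, establishing complex extremality. The main subtlety is the unitary-conjugation step, which speaks directly to the question flagged in the remark following Lemma~\ref{pem}; it crucially uses the closure of $K$ under conjugation by complex unitaries, together with the fact that the underlying set $K$ (viewed as real) still admits such $y'$ as valid test dilations.
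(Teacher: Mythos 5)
Your proof is correct and follows the paper's overall outline (forward direction by viewing a real nc convex combination as a complex one; converse by upgrading to complex maximality and then running the Corollary~\ref{chekkc} computation with complex-irreducibility). The one substantive difference is that you actually \emph{prove} the step ``real maximal in $K$ implies complex maximal in $K$'' via the unitary-conjugation trick, whereas the paper's proof merely asserts ``$x$ is real, and hence complex, maximal in $K$'' with no justification --- and indeed the remark following Lemma~\ref{pem} states that this implication is not known for a general complex compact nc convex set. Your argument (absorb the complex isometry $\alpha$ into a real one by a unitary $U$ of the codomain, use that $UyU^*=(U^*)^*yU^*\in K_m$, apply real maximality, conjugate the resulting commutation relation back) appears to be valid and elementary, so it both closes a gap in the paper's own proof of this lemma and seems to settle affirmatively the question raised in that remark; the authors should verify this. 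One small caveat: for infinite cardinals with $m=n$ the specific choice $\iota=[\,I_n\;0\,]^\tran$ may fail (the coranges of $\alpha$ and $\iota$ need not have the same dimension, e.g.\ if $\alpha$ is a non-unitary isometry of an infinite-dimensional space), so you should instead choose a real isometry $\beta\in M_{m,n}(\bR)$ whose range has orthocomplement of the same Hilbert dimension as that of $\operatorname{ran}\alpha$ and take $U$ with $U\alpha=\beta$; the rest of the argument is unchanged. The forward direction and the appeal to Example~\ref{cise} (more precisely, to the set $K_c$ built from it) for the strictness of $\partial_{\bC}K\subsetneq\partial_{\bR}K$ match the paper.
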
 
    
    \begin{proof}  This is almost identical to Lemma \ref{pekkc}.    Indeed suppose that $x$  is complex  pure in $K$ and $x =  \sum_k \, \alpha_k^\tran x_k \alpha_k \in K_n$ for finite families $\{x_i \in K_{n_i}\}$ and $\{\alpha_i \in M_{n_i,n}(\bR)\}$, where $\sum \alpha_k^\tran \alpha_k = 1_n$.  
                Since $x$  is complex pure we have that a positive scalar multiple of $\alpha_k$ is  complex isometry matrix 
     $\beta_k \in M_{n_k,n}(\bC)$, 
     with $\beta_k^\tran x_k \beta_k = x$.  Clearly $\beta_k \in M_{n_k,n}(\bR)$.  So we have verified that $x$  is real 
     pure in $K$.      
          
     If $x$ is extreme in $K$ then in addition each 
     $x_k$  commutes with $\alpha_k \, \alpha_k^\tran$ and  $x_k \, \beta_k \, \beta_k^\tran = \beta_k x \beta_k^\tran$, which 
     is clearly (real and complex) unitarily equivalent to  $x$.    
    
    For the next part we first note that we already showed in Lemma \ref{pekkc} and Corollary \ref{maxiff} that  if $x \in K$ is complex maximal
      (resp.\ extreme) in $K_c$  then $x$  is complex maximal (resp.\ extreme) in $K$.        Suppose that  $x$ is real nc extreme  in $K$ and complex-irreducible.  Then  $x$ is complex nc extreme  in $K_c$ by Corollary \ref{chekkc}.       Then $x$ is real maximal in $K$, and hence complex maximal by Theorem \ref{pem}.
     A variant of the argument in Corollary \ref{chekkc} shows that if $x$ is irreducible for $M_n(\bC)$ then it is complex extreme.  Finally we give an example where $\partial_{\bC} K \neq \partial_{\bR} K$ in the next remark. 
     \end{proof} 
    
    {\bf Remark.} By a slight variant of the argument for the last assertion of  Corollary \ref{chekkc},  $x \in K_1$ is real extreme in $K_1$ if and only if $x$ is complex  extreme in $K$.  
    However generally the converses in Lemma \ref{pekkcr} fail for pure  (resp.\ extreme).  E.g.\ if $x$ is real nc extreme in $K$ then $x$ need not be complex extreme in $K$.   This fails even in $K_2$.  To see this
    consider an  extreme $x \in K$ such that 
    $x$ is not complex nc extreme in $K_c$, such as in Examples  \ref{cise} or \ref{Cal} (or Lemma \ref{pekkcr}).   However by the Remark     above Example \ref{cise}, $x$ is real nc extreme in $K_c$. 
Similarly $x$ is real pure in $K_c$, but not complex pure there (if it were it would be complex extreme since it is also complex maximal).
     
        \end{section}

\begin{section}{Noncommutative convex functions} \label{ncf} 

We shall see that the results with very lengthy proofs in Davidson and Kennedy’s chapter on nc convex functions, follow in the real case  very quickly by complexification.  We  saw this phenomenon often in \cite{BMc} (but not so much 
earlier in the present paper). 
However to achieve this, and for its own intrinsic interest, we will need to develop some basic theory of the complexification of nc convex functions and their     multivalued counterpart.    

We recall that  $f \in M_n(B(K))$ is identified with 
a graded function $f : K \to M_n(\cM)$.
It is unitarily equivariant for unitaries $U \in M_n$: $$f(UxU^*) =  (U \otimes I_n)f(x) (U^* \otimes I_n).$$
As in \cite[Section 7.3]{DK}, $f$ `respects
direct sums', and  
$f$ is selfadjoint if $f(x) = f(x)^*$ for $x \in K$.
As in \cite[Lemma 7.4]{BMc} we have $f_c \in M_n(B(K_c))$.  We recall that the isomorphism $M_n(B(K_c))
\cong M_n(B(K))_c$ is in one direction 
$f \mapsto {\rm Re} \circ f_{|K} + i \, {\rm Im} \circ f_{|K}$.  The inverse is the `complexification' $\Psi$ of the map
$f \mapsto f_c(x+iy) = u^* f(c(x,y)) u$ on $B(K,M_n)$, as we will discuss in more detail above Proposition  \ref{okfc0}.  
As in \cite{DK} the epigraph of a selfadjoint bounded nc function $f  \in M_n(B(K))$ is
the subset Epi$(f) \subseteq \bigsqcup_m \, (K_m \otimes M_n(M_m))$  defined by
$${\rm Epi}_m(f) = \{(x,\alpha) \in K_m \times M_n(M_m) : x \in K_m , \alpha \geq f(x) \}.$$
Davidson and Kennedy define $f$ to be (nc)  {\em convex} if Epi$(f)$  is an nc convex set, and to be 
(nc) {\em lowersemicontinuous} (or `lsc') if $\Epi(f)$  is closed.

 \begin{lemma}\label{LemIsomEquiv}
     Let  $K$ a real compact nc convex set and $f \in M_n(B(K))$.  Then $f$ is nc convex  if and only if
     \begin{equation}\label{eqIsomEquiv}
         f(\alpha^* x \alpha) \leq (1_n \otimes \alpha^*) f(x) (1_n \otimes \alpha),
     \end{equation} for every $x \in K_m$ and every isometry $\alpha \in M_{m,l}(\bR)$. 
 \end{lemma}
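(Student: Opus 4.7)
The plan is the standard epigraph characterization, adapted to the nc setting, by unwinding what it means for $\Epi(f)$ to be closed under direct sums and isometric compressions. Recall that $\Epi(f) \subseteq \bigsqcup_m \, K_m \times M_n(M_m)$ is a candidate nc convex set, where the compression of $(x,\beta) \in K_m \times M_n(M_m)$ by an isometry $\alpha \in M_{m,l}(\bR)$ must be $(\alpha^\tran x \alpha,\, (1_n \otimes \alpha^\tran) \beta (1_n \otimes \alpha)) \in K_l \times M_n(M_l)$, since the second coordinate lives in $M_n \otimes M_m$ and only the $M_m$ tensor factor is to be compressed.

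For the forward direction, suppose $f$ is nc convex. Given $x \in K_m$ and an isometry $\alpha \in M_{m,l}(\bR)$, the point $(x, f(x))$ lies in $\Epi_m(f)$ by definition. Applying the compression by $\alpha$ and using that $\Epi(f)$ is closed under isometric compressions yields
$$(\alpha^\tran x \alpha,\, (1_n \otimes \alpha^\tran) f(x) (1_n \otimes \alpha)) \in \Epi_l(f),$$
which, by the very definition of $\Epi$, is precisely the inequality (\ref{eqIsomEquiv}).

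Conversely, assume (\ref{eqIsomEquiv}) and verify the three closure properties of $\Epi(f)$. Being graded is immediate from the definition. Closure under direct sums uses that $f$ respects direct sums: if $(x_i, \beta_i) \in \Epi_{m_i}(f)$, then $f(x_1 \oplus x_2) = f(x_1) \oplus f(x_2) \leq \beta_1 \oplus \beta_2$, so $(x_1 \oplus x_2, \beta_1 \oplus \beta_2) \in \Epi_{m_1+m_2}(f)$. For closure under isometric compressions, given $(x,\beta) \in \Epi_m(f)$ and an isometry $\alpha \in M_{m,l}(\bR)$, monotonicity of conjugation by $1_n \otimes \alpha$ combined with the hypothesis gives
$$(1_n \otimes \alpha^\tran) \beta (1_n \otimes \alpha) \;\geq\; (1_n \otimes \alpha^\tran) f(x) (1_n \otimes \alpha) \;\geq\; f(\alpha^\tran x \alpha),$$
so $(\alpha^\tran x \alpha,\, (1_n \otimes \alpha^\tran) \beta (1_n \otimes \alpha)) \in \Epi_l(f)$ as required.

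The only point that requires any care, and arguably the only conceptual step, is correctly identifying the compression action on the second coordinate as conjugation by $1_n \otimes \alpha$ (acting only on the $M_m$ factor of $M_n(M_m)$), rather than by $\alpha$ alone; once this is settled neither direction presents an obstacle. The lemma is then precisely the nc analogue of the classical fact that a (selfadjoint, operator-valued) function is convex if and only if its epigraph is convex.
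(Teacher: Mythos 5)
Your proof is correct and follows essentially the same route as the paper's: compress the point $(x,f(x))$ for the forward direction, and verify gradedness, direct-sum closure (via $f$ respecting direct sums), and compression closure (via the inequality plus monotonicity of conjugation by $1_n\otimes\alpha$) for the converse. The only cosmetic difference is that you check literal two-fold direct sums where the paper verifies the general nc combination $\sum_i \alpha_i x_i\alpha_i^\tran$ with orthogonal-range isometries, but these are equivalent once compression closure (which includes unitary conjugation) is in hand.
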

 \begin{proof}  In the complex case this is Remark $7.2.2$ in \cite{DK} or \cite[Proposition 16.13.2]{Dav}.  We include a proof, partly to serve as a model for the later more complicated 
 multivalued nc function case.
     For $f \in M_n(B(K))$, we have that $f$ is graded, and so $\Epi(f)$ is graded. Also $f$  preserves direct sums, which results in $\Epi(f)$ being closed under direct sums. Indeed, let $(x_i,\beta_i) \in \Epi_{m_i}(f)$,   and let $\alpha_i \in M_{k,m_i}(\bR)$ be a family of isometries such that $\sum_i \alpha_i \alpha_i^\tran = I_k$. We need to check that $$\sum_i\alpha_i (x_i,\beta_i)\alpha_i^\tran = \big(\sum_i \alpha_i x_i \alpha_i^\tran, \sum_i (1_n \otimes \alpha_i) \beta_i (1_n \otimes \alpha_i^\tran) \big) \in \Epi_k(f).$$
     Clearly, $\sum_i \alpha_i x_i \alpha_i^\tran \in K_k$. Also, $f(x_i) \leq \beta_i$ and adjoining both sides by $(1_n \otimes \alpha_i)$ and summing over $i$ gives that 
     $$\sum_i (1_n \otimes \alpha_i) f(x_i) (1_n \otimes \alpha_i^\tran) \leq \sum_i (1_n \otimes \alpha_i) \beta_i (1_n \otimes \alpha_i^\tran),$$
     and because $f$  respects  direct sums we get 
     $$f(\sum_i \alpha_i x_i \alpha_i^\tran) \leq \sum_i (1_n \otimes \alpha_i) \beta_i (1_n \otimes \alpha_i^\tran)$$ as we wanted.

     If (\ref{eqIsomEquiv}) holds, $\Epi(f)$ will be closed under compressions by isometries. If $(x,\beta) \in \Epi_m(f)$ and $\alpha \in M_{m,k}$ is an isometry then as before $\alpha^\tran x \alpha \in K_{k}$ and using (\ref{eqIsomEquiv}) gives
     \begin{align*}
         (1 \otimes \alpha^\tran) \beta (1 \otimes \alpha) &\geq (1 \otimes \alpha^\tran) f(x) (1 \otimes \alpha)
         \\&\geq f(\alpha^\tran x \alpha)
     \end{align*}
     and so $\alpha^\tran(x, \beta) \alpha \in \Epi_k(f)$. Conversely, if $\Epi(f)$ is a nc convex set then for all $x \in K_m$ and isometries $\alpha \in M_{k,m}$ we have $(x,f(x)) \in \Epi_m(f)$ and therefore $\big(\alpha^\tran x \alpha, (1_n \otimes \alpha^\tran) f(x) (1 \otimes \alpha)\big) \in \Epi_k(f)$ which gives (\ref{eqIsomEquiv}).
     \end{proof}

     In particular a bounded affine nc function is nc convex.

\bigskip 

{\bf Remark.} 
  Similarly, the Hansen-Pedersen type application 
in \cite[Proposition 7.2.3]{DK} holds in the real case, showing that $f$ is real nc convex  if and only if it is classically convex at every `level'.

\bigskip 

We now examine the relationship between lowersemicontinuous convex functions in $M_n(B(K_c))$ and $M_n(B(K))_c$. From \cite{BMc}, there are natural maps between these two sets. Namely, we had
    \[\Gamma: M_n(B(K_c)) \to M_n(B(K))_c\]
    \[\omega \mapsto \re \, \omega_{|K} + i \im \omega_{|K} \]
    and its inverse
    \[\Psi: M_n(B(K))_c \to M_n(B(K_c)) : f+ig \mapsto f_c + i g_c,\]
    or alternatively,  
    \[\Psi(f+ig)(x+iy) = (1_n \otimes u_m^*) \big(f(c(x+iy)) + i g(c(x+iy))\big) (1_n \otimes u_m)\]
    for $f+ig \in M_n(B(K))_c$ and $x+iy \in (K_c)_m$. 

    \begin{prop} \label{okfc0} 
        Let $f \in M_n(B(K))_{\rm sa}$.  Then $f$ is real nc convex (resp.\ lowersemicontinuous) if and only if $f_c = \Psi(f+i0)$ is complex convex (resp.\ lowersemicontinuous) as an element of $M_n(B(K_c))$.
    \end{prop}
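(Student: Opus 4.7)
The plan is to handle convexity and lowersemicontinuity separately. For convexity I will use the isometric-equivariance characterization of Lemma~\ref{LemIsomEquiv}; for lsc I will set up a clean correspondence between the epigraphs. Throughout, the workhorses are the formula $f_c(z) = (1_n \otimes u_m^*) f(c(z))(1_n \otimes u_m)$, the identity $c(\alpha^* z \alpha) = c(\alpha)^\tran c(z) c(\alpha)$, the fact that $c$ sends complex isometries to real isometries, and the intertwining $u_l^* c(v) = v u_m^*$ from \cite[Lemma 3.8]{BMc} (whose transpose gives $u_l^* c(\alpha)^\tran = \alpha^* u_m^*$ via $c(\alpha)^\tran = c(\alpha^*)$), all of which have already been used in the proofs of Theorem~\ref{cmax} and Lemma~\ref{pem}.

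For the convexity direction ``$f$ real nc convex $\Rightarrow$ $f_c$ complex nc convex,'' I would take $z = x+iy \in (K_c)_m$ and a complex isometry $\alpha \in M_{m,l}(\bC)$, apply Lemma~\ref{LemIsomEquiv} to $f$ with the real isometry $c(\alpha)$ and the real point $c(z) \in K_{2m}$, and then conjugate both sides by $1_n \otimes u_l^*$ and $1_n \otimes u_l$. The intertwining identities collapse the right-hand side into $(1_n \otimes \alpha^*) f_c(z) (1_n \otimes \alpha)$ and the left-hand side into $f_c(\alpha^* z \alpha)$, giving the required complex inequality. For the converse, taking a real isometry $\alpha$ and $x \in K_m$, I would verify that $f_c(x+i0) = f(x)$ by writing $c(x,0) = x \oplus x$ and using that $f$ respects direct sums together with the compression identity $(1_n \otimes u_m^*)(A \oplus A)(1_n \otimes u_m) = A$; then the complex convexity of $f_c$ at $(x+i0, \alpha)$ reduces directly to the real convexity inequality for $f$.

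For lsc, the key structural fact I would establish first is that $c^{(n)}(f_c(z)) = f(c(z))$, where $c^{(n)}$ is the natural amplification of $c$ on $M_n(M_m(\bC))$. This follows because $W^* c(z) W = c(z)$ together with the unitary equivariance of $f$ forces $f(c(z))$ to lie in the image of $c^{(n)}$, and the compression formula for $f_c$ shows its $c^{(n)}$-preimage is exactly $f_c(z)$. Combined with the fact that $\beta \geq f_c(z)$ iff $c^{(n)}(\beta) \geq c^{(n)}(f_c(z))$, this yields the identification
\[
\Epi(f_c)_m = \bigl\{(z,\beta) \in (K_c)_m \times M_n(M_m(\bC)) : (c(z), c^{(n)}(\beta)) \in \Epi(f)_{2m} \bigr\}.
\]
Since $(z,\beta) \mapsto (c(z), c^{(n)}(\beta))$ is a continuous homeomorphism onto its (closed) image, closedness of $\Epi(f)$ transfers to closedness of $\Epi(f_c)$. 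The converse uses the embedding $(x,\alpha) \mapsto (x+i0, \alpha+i0)$: a net in $\Epi(f)_m$ converging to $(x,\alpha)$ maps into $\Epi(f_c)_m$, and closedness there together with $f_c(x+i0) = f(x)$ delivers $\alpha \geq f(x)$.

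The main obstacle I foresee is the bookkeeping for the amplified identity $c^{(n)}(f_c(z)) = f(c(z))$ in the lsc step, since one must be careful about how the outer $M_n$ factor interacts with the $c$-construction and with the unitary $W$ used in the equivariance argument. Once that identity is in hand, both halves of the lsc statement are essentially continuity-of-preimage arguments, and the convexity halves are purely formal manipulations with the identities already catalogued in Section~\ref{rnc}.
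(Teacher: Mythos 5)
Your proposal is correct and follows essentially the same route as the paper: Lemma~\ref{LemIsomEquiv} plus the $c(\alpha^*z\alpha)=c(\alpha)^\tran c(z)c(\alpha)$ and $u$-intertwining identities for convexity, and for lower semicontinuity the transfer of nets between $\Epi(f)$ and $\Epi(f_c)$ via the $c$ map in one direction and the embedding $x\mapsto x+i0$ in the other, hinging on the fact that $f(c(z))$ lies in the image of $c$ so that $c(f_c(z))=f(c(z))$. Your justification of that last fact via unitary equivariance of $f$ under the unitary $W$ is exactly the right mechanism (the paper invokes the same fact from the paragraph preceding Theorem~\ref{cmax}), so there is nothing to fix.
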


 \begin{proof}
        If $f$ is real convex, then by  Lemma  \ref{LemIsomEquiv} we have that for all $x \in K_m$ and isometries $\alpha \in M_{k,m}(\bR)$ then
        $$f(\alpha^\tran x \alpha) \leq (1_n \otimes \alpha^\tran) f(x) (1_n \otimes \alpha).$$
        Let $z \in (K_c)_m$ and $\beta \in M_{k,m}(\bC)$ be an isometry. Then,
        \begin{align*}
            f_c(\beta^* z \beta)
            &= (1_n \otimes u_{k}^*) f(c(\beta^* z \beta)) (1_n \otimes u_k)
            \\&= (1_n \otimes u_{k}^*) f(c(\beta)^\tran c(z) c(\beta)) (1_n \otimes u_k)
            \\&\leq (1_n \otimes u_{k}^*c(\beta)^\tran) f( c(z)) (1_n \otimes c(\beta) u_k )
            \\&= (1_n \otimes \beta^* u_{m}^*) f( c(z) )
            (1_n \otimes u_m \beta)
            \\&= (1_n \otimes \beta^*) f_c(z) (1_n \otimes \beta).
        \end{align*}
        Conversely, suppose $f_c$ is convex and let $x \in K$ and $\alpha$ be an isometry. Then (\ref{eqIsomEquiv}) holds because $f(\alpha^\tran x \alpha) = f_c(\alpha^\tran x \alpha + i0)$ and since (\ref{eqIsomEquiv}) holds for $f_c$.

        If $f$ is lowersemicontinuous and $(x_\lambda, \alpha_\lambda) \to (x, \alpha)$, with the net in $\Epi(f_c)$,  then $(c(x_\lambda), c(\alpha_\lambda)) \to (c(x), c(\alpha))$. For $(z,\beta) \in \Epi(f_c)$ we have 
        $\beta \geq f_c(z) = u^* f(c(z)) u$. Since $f$ is real affine, $f(c(z))$ is of the form $c(w)$ for some $w \in \cM(\bC)$ (as in the  paragraph before Theorem \ref{cmax}).  It is easy to see that this implies that $u^* f(c(z)) u = w$ so that        $c(u^* f(c(z)) u ) = f(c(z)).$
                 Taking $c$ of both sides we get $c(\beta) \geq c(u^* f(c(z)u) =  f(c(z))$.               That is, $(c(z), c(\beta)) \in \Epi(f)$. 
                So $(c(x_\lambda), c(\alpha_\lambda))$ is in $\Epi(f)$, and hence so is its limit   $(c(x), c(\alpha))$. 
        That is, $f(c(x)) \leq c(\alpha)$. Reversing the argument above, adjoining  $u$ gives that $(x,\alpha) \in \Epi(f_c)$. 
        So $\Epi(f_c)$ is closed and $f_c$ is lsc.
        Conversely, if $\Epi(f_c)$ is closed and $(x_\lambda, \alpha_\lambda) \to (x, \alpha)$ with $(x_\lambda, \alpha_\lambda) \in \Epi(f)$, then $(x_\lambda +i0, \alpha_\lambda+i0) \to (x+i0, \alpha+i0)$ in $\Epi(f_c)$.
        Thus $(x,\alpha) \in \Epi(f)$.  So $\Epi(f)$ is closed and $f$ is lsc.
         \end{proof}

\begin{lemma}\label{Lemcfgi}
  Suppose that $f + ig \in M_n(B(K))_c = M_n(B(K_c))$   for a real compact nc convex set
   $K$.  Then $f + ig$ is complex
      nc convex if and only if  $c(f,g)$ is real nc convex
      in $M_{2n}(B(K))$. If these hold then $f$ is real nc convex. 
 \end{lemma}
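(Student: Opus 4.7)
The plan is to apply the characterization of nc convexity provided by Lemma \ref{LemIsomEquiv} in both the real and complex settings, and to translate between the inequality at $z \in (K_c)_m$ (with complex isometry $\beta \in M_{m,l}(\bC)$) and the corresponding inequality at $c(z) \in K_{2m}$ (with real isometry $c(\beta) \in M_{2m,2l}(\bR)$). The linchpin is the block-compression identity
\[
(1_{2n} \otimes u_m^*) \, c(f,g)(c(w)) \, (1_{2n} \otimes u_m) \; = \; c(f_c(w), g_c(w)), \qquad w \in (K_c)_m,
\]
which follows by a direct block computation from $f_c(w) = (1_n \otimes u_m^*) f(c(w)) (1_n \otimes u_m)$ (and likewise for $g$) together with the block form of $c(f,g)$. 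This combines with the intertwining $(1_{2n} \otimes c(\beta))(1_{2n} \otimes u_l) = (1_{2n} \otimes u_m)(1_{2n} \otimes \beta)$ obtained from $c(\beta) u_l = u_m \beta$ as in \cite[Lemma 3.8]{BMc}.

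For the $(\Leftarrow)$ direction, take $z \in (K_c)_m$ and complex isometry $\beta$, apply Lemma \ref{LemIsomEquiv} to $c(f,g)$ with the real isometry $c(\beta)$ and point $c(z)$, and then compress both sides by $1_{2n} \otimes u_l$. On the left, using that $c$ is a real $*$-homomorphism together with the hinge identity above, I obtain $c(f_c(\beta^* z \beta), g_c(\beta^* z \beta))$. On the right, the intertwining identity moves the outer compression past $1_{2n} \otimes c(\beta)^\tran$, after which the hinge identity yields $(1_{2n} \otimes \beta^*) \, c(f_c(z), g_c(z)) \, (1_{2n} \otimes \beta)$; this equals $c$ applied to $(1_n \otimes \beta^*)(f+ig)(z)(1_n \otimes \beta)$, since $c$ acts on the outer $M_n$ factor and commutes with sandwiching by $1_n \otimes \beta$ on the inner $M_m$ factor. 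Finally, since $c(H_1) \leq c(H_2)$ iff $H_1 \leq H_2$ for selfadjoint $H_1, H_2 \in M_n(M_l(\bC))$ (a standard fact for the complexification positive cone, easily verified via the canonical isometry $u_n$), I strip the outer $c$ from both sides and recover the complex nc convexity inequality of Lemma \ref{LemIsomEquiv} for $f+ig$.

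For the $(\Rightarrow)$ direction, take $x \in K_m$ and a real isometry $\alpha \in M_{m,l}(\bR)$, view them as complex data $x+i0 \in (K_c)_m$ and $\alpha \in M_{m,l}(\bC)$, and apply Lemma \ref{LemIsomEquiv} to $f+ig$. Since $(f+ig)(y+i0) = f(y) + ig(y)$ for $y \in K$, applying $c$ to both sides of the resulting inequality converts it into
\[
c(f,g)(\alpha^\tran x \alpha) \; \leq \; (1_{2n} \otimes \alpha^\tran) \, c(f,g)(x) \, (1_{2n} \otimes \alpha),
\]
the right side computed by a routine block manipulation since $\alpha$ is real and $c$ commutes with sandwiching by $1_n \otimes \alpha$ on the inner factor. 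This is the Lemma \ref{LemIsomEquiv} hypothesis that certifies $c(f,g)$ as real nc convex. The final assertion—that $f$ is real nc convex when these hold—is then immediate: restricting the real convexity inequality for $c(f,g)$ to the $(1,1)$-corner of its $2 \times 2$ outer block structure gives precisely the Lemma \ref{LemIsomEquiv} inequality for $f$.

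The only real obstacle is bookkeeping the tensor factors—keeping straight which compressions and isometries act on the outer $M_{2n}$ factor versus the inner $M_m$ factor—so that the identities $c(\beta) u_l = u_m \beta$ and ``$c$ preserves and reflects positivity'' are applied in the correct slots. Once these intertwinings are written down explicitly, the rest is mechanical block algebra.
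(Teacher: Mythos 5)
Your proposal is correct and follows essentially the same route as the paper: the forward direction applies $c$ to the complex convexity inequality of Lemma \ref{LemIsomEquiv} and uses that $c$ preserves order, the converse reduces the complex inequality at $z$ and $\beta$ to the real one at $c(z)$ and $c(\beta)$ via the intertwining $c(\beta)u_l = u_m\beta$ and the fact that $c$ reflects order (equivalently, your compression by $u$), and the final assertion is read off the $(1,1)$ block corner. The only cosmetic difference is that you package the key computation as an explicit ``hinge identity'' for $(1_{2n}\otimes u_m^*)\,c(f,g)(c(w))\,(1_{2n}\otimes u_m)$, whereas the paper runs the same block algebra inline.
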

 \begin{proof}  If $x \in K_m$ and $\alpha \in M_{m,l}(\bR)$ is an isometry, 
  then we need to know that $$(I_{2n} \otimes \alpha^{\tran}) \, c(f,g)(x) \, (I_{2n} \otimes \alpha) \geq c(f,g)(\alpha^{\tran} x \alpha).$$
  However we have that 
  $$(f+ig)(\alpha^{\tran} x \alpha) \leq 
  (I_{n} \otimes \alpha^{\tran}) \, (f+ig)(x) \, (I_{n} \otimes \alpha). $$
  Applying $c$ to both sides, and using that 
  $(I_{n} \otimes \alpha^{\tran}) \, (f+ig)(x) \, (I_{n} \otimes \alpha)$ is  $(I_{n} \otimes \alpha^{\tran}) \, f(x) \, (I_{n} \otimes \alpha) + i (I_{n} \otimes \alpha^{\tran}) \, g(x) \, (I_{n} \otimes \alpha),$ gives
  $$c(f,g)(\alpha^{\tran} x \alpha) \leq c((I_{n} \otimes \alpha^{\tran}) \, f(x) \, (I_{n} \otimes \alpha) , (I_{n} \otimes \alpha^{\tran}) \, g(x) \, (I_{n} \otimes \alpha)), $$
which equals  $(I_{2n} \otimes \alpha^{\tran}) \, c(f,g)(x) \, (I_{2n} \otimes \alpha)$  as desired.  

The last assertion follows easily from this by inspection of the top left corner. 

Conversely, suppose that $c(f,g)$ is real nc convex
      in $M_{2n}(B(K))$.   
      If $x  + iy \in (K_m)_c$ and $\alpha \in M_{m,l}(\bC)$ is an isometry, 
  then we need to show that $$(I_{n} \otimes \alpha^*) \, (f+ig)(x+iy) \, (I_{n} \otimes \alpha) \geq (f+ig)(\alpha^* (x +iy) \alpha).$$
Now $(f+ig)(x+iy) = u^* (f(c(x,y)) + i g(c(x,y))) u$.   Since 
$u (I_{n} \otimes \alpha) = (I_{2n} \otimes \alpha)  u$, it suffices to show that
$$(I_{2n} \otimes \alpha^*) (f(c(x,y)) + i g(c(x,y))) (I_{2n} \otimes \alpha) \geq 
f(c(\alpha^* (x +iy) \alpha)) + i g(c(\alpha^* (x +iy) \alpha)).$$ The latter equals $$f(c(\alpha)^\tran c(x,y) c(\alpha)) + ig(c(\alpha)^\tran c(x,y) c(\alpha)) .$$
Taking $c$ of both sides of the last inequality, we need that  
$$(I_{2n} \otimes c(\alpha)^{\tran}) c(f,g)(c(x,y)) (I_{2n} \otimes c(\alpha)) \geq c(f,g)(c(\alpha)^\tran c(x,y) c(\alpha)),$$ 
which holds if  $c(f,g)$ is real nc convex.  \end{proof}

{\bf Remarks.} 1)\ The last result extends to convex functions an important part  of the $B(K_c) = B(K)_c$ identification (see \cite[Lemma 7.4]{BMc}). We can interpret the bounded convex nc functions as a subset of the bounded nc functions, and then via the latter identification the last result is a very useful fact about the `complexification' of convex functions.

\smallskip

2)\ The proof shows that $f + ig$ is  complex  
      nc convex if and only if it is real 
      nc convex.

\bigskip

Now \cite[Proposition 7.2.8]{DK} is provable by complexification:    

 \begin{prop} \label{dk728}
       If $K$ is a  real compact nc convex set and if $\mu, \nu : C(K) \to M_m(\bR)$ are ucp
with $\mu(f ) = \nu(f)$ for all $n< \infty$ and $f \in M_n(C(K))$
nc convex, then $\mu =\nu$.  
    \end{prop}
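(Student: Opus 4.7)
The strategy is to complexify everything and invoke the complex version of the result, namely \cite[Proposition 7.2.8]{DK}. Complexifying the hypothesized real ucp maps yields complex ucp maps $\mu_c, \nu_c : C(K)_c \to M_m(\bC)$, and under the natural identification $C(K)_c \cong C(K_c)$ (which follows from results of \cite{BMc} on complexification applied to $C(K) = C^*_{\max}(A(K))$, since $A(K)_c = A(K_c)$), we regard these as complex ucp maps on $C(K_c)$. To conclude $\mu_c = \nu_c$ (and hence $\mu = \nu$) via the complex case, it suffices to verify its hypothesis: for every $n < \infty$ and every complex nc convex $F \in M_n(C(K_c))$ we have $\mu_c^{(n)}(F) = \nu_c^{(n)}(F)$.

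For the key step, write $F = f + i g$ with $f, g \in M_n(C(K))$. By Lemma \ref{Lemcfgi}, complex nc convexity of $F$ is equivalent to real nc convexity of the element $c(f, g) \in M_{2n}(C(K))$. Since $2n < \infty$, our hypothesis applied to the real nc convex function $c(f,g)$ gives
\[
\mu^{(2n)}(c(f,g)) = \nu^{(2n)}(c(f,g)).
\]
Because $c$ is a block operation on matrices and $\mu, \nu$ are real linear, the two sides equal $c(\mu^{(n)}(f), \mu^{(n)}(g))$ and $c(\nu^{(n)}(f), \nu^{(n)}(g))$ respectively. Injectivity of $c$ then yields $\mu^{(n)}(f) = \nu^{(n)}(f)$ and $\mu^{(n)}(g) = \nu^{(n)}(g)$, so $\mu_c^{(n)}(F) = \nu_c^{(n)}(F)$. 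Applying the complex case of the proposition to $\mu_c, \nu_c$ on $C(K_c)$ finishes the argument.

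The main conceptual obstacle is the passage from complex nc convex $F \in M_n(C(K_c))$ to a real nc convex substitute in $M_{2n}(C(K))$ that is testable by the hypothesis, and this is precisely what Lemma \ref{Lemcfgi} supplies. The remaining verifications---that complexification of a real ucp map into $M_m(\bR)$ gives a complex ucp map into $M_m(\bC)$, and the identification $C(K)_c \cong C(K_c)$---are straightforward applications of the complexification machinery developed in \cite{BMc} and require no new idea.
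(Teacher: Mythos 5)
Your proposal is correct and follows essentially the same route as the paper: complexify $\mu,\nu$, reduce the hypothesis of the complex case \cite[Proposition 7.2.8]{DK} to the real hypothesis by passing from a complex nc convex $f+ig$ to the real nc convex $c(f,g)$ via Lemma \ref{Lemcfgi}, and conclude by injectivity of $c$. Your write-up is in fact slightly more explicit than the paper's about why $\mu^{(2n)}(c(f,g))=\nu^{(2n)}(c(f,g))$ forces $\mu_c^{(n)}(F)=\nu_c^{(n)}(F)$.
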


 \begin{proof}
Indeed $\mu_c, \nu_c : C(K_c) \to M_m(\bC)$ are ucp.
Claim: $\mu_c(f + ig ) = \nu_c(f+ig)$ for all $n< \infty$ and $f + ig \in M_n(C(K)) = M_n(C(K_c))$ nc convex.   If the Claim is true then by the complex case (Proposition 7.2.8 in \cite{DK}), $\mu_c =\nu_c$ so that $\mu =\nu$ as desired.
Applying the map $c$, we see that $\mu_c(f + ig ) = \nu_c(f+ig)$
if $\mu_2(c(f,g)) = \nu_2(c(f,g))$, and the latter holds by hypothesis since $c(f,g)$ is nc convex by the previous lemma. \end{proof}

\section{Convex multivalued functions and convex  envelopes} \label{multi}

We now extend our real versions of convexity and lower semicontinuity to bounded multivalued selfadjoint nc
functions $F$.  We define the latter in the real case as in \cite[Definition 7.3.1]{DK} except of course matrices  have real entries (including   unitary matrices, naturally). Thus e.g.\ $F$ is bounded if there is a constant $\lambda > 0$  such that for
every $\beta \in F(x)$ there is an $\alpha \in F(x)$ with $\alpha \leq \beta$ and $\| \alpha \|
\leq \lambda$.  The graph  ${\rm Graph}(F)$ is defined by
 $${\rm Graph}_m(F) = \{ (x , \alpha ) : x \in K_m, \alpha \in F(x) \}.$$ Convexity and lower semicontinuity 
of $F$ are defined just as in \cite[Definition 7.3.2]{DK}, except all
matrices  have real entries.  That is, $F$ is convex (resp.\ lowersemicontinuous) if ${\rm Graph}(F)$ is nc convex (resp.\ closed). We say that $F \leq G$ if $G(x) \subseteq F(x)$ for all $x \in K$. 
Every real  (bounded) nc function $f : K \to M_n(\cM(\bR))$ gives a (bounded) multivalued selfadjoint nc
function $F(x) = [f(x),\infty)$, with Epi $(f)
= {\rm Graph}(F)$.  So $f$ is convex if and only if $F$ is convex.

  \begin{lemma}\label{bmext} For a real  nc convex set $K$, every real  bounded nc  multivalued selfadjoint function $F : K \to M_m(\cM(\bR))$ has a unique complex  bounded nc multivalued  extension $F_c : K_c \to M_m(\cM(\bC))$, and this is selfadjoint. 
           Similarly every real  bounded nc  selfadjoint  multivalued function $F : K \to M_m(\cM(\bC))$ has a unique complex  bounded selfadjoint nc multivalued  extension $F_c : K_c \to M_m(\cM(\bC))$.   Finally, $F \mapsto F_c$ is an order embedding:
           $F \leq G$ if and only if $F_c \leq G_c$.
            \end{lemma}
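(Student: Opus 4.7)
The strategy is to work at the level of graphs, following the nc-set complexification theory from \cite{BMc} and the single-valued treatment in Proposition \ref{okfc0}. Specifically, for $z \in (K_c)_m$, I would define
\[F_c(z) = \{\alpha \in M_n(M_m(\bC))_{\mathrm{sa}} : c(\alpha) \in F(c(z))\},\]
so that $\mathrm{Graph}(F_c)$ is the complexification of $\mathrm{Graph}(F)$ viewed as a graded nc set. Selfadjointness is then built into the definition, and boundedness descends from $F$ since $\|c(\alpha)\| = \|\alpha\|$ as $c$ is a faithful $*$-homomorphism.

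Next I would verify the complex multivalued nc structure --- graded, $\bC$-unitarily equivariant, direct-sum compatible --- of $F_c$ using the standard identities $c(UzU^*) = c(U) c(z) c(U)^\tran$ (with $c(U)$ real orthogonal when $U$ is complex unitary) together with the permutation equivalence between $c(z_1 \oplus z_2)$ and $c(z_1) \oplus c(z_2)$, combined with the analogous properties of $F$. For the extension property $F_c(x + i0) = F(x)$ at real $x \in K_n$, I would use $c(x+i0) = x \oplus x$ and $c(\beta + i0) = \beta \oplus \beta$: the forward inclusion is the direct-sum axiom $F(x) \oplus F(x) \subseteq F(x \oplus x)$, while the reverse uses that $c(\alpha) \in F(x \oplus x)$ is $W$-invariant, forcing $\alpha$ to arise from a real matrix, with real-isometric compression by $\begin{bmatrix} I \\ 0 \end{bmatrix}$ then placing it in $F(x)$.

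The main obstacle is uniqueness. Given another extension $G : K_c \to M_n(\cM(\bC))$, since $c(z) \in K$ we have $G(c(z)) = F(c(z))$ for every $z \in (K_c)_m$. The key identity $c(z) = V (z \oplus \bar z) V^*$ for the complex unitary $V = [u_m, \bar u_m]$ (verified via $u_m^* c(z) u_m = z$, $\bar u_m^* c(z) \bar u_m = \bar z$, and $u_m^* \bar u_m = 0$) realizes $c(z)$ as $\bC$-unitarily equivalent in $K_c$ to $z \oplus \bar z$. Combined with the $\bC$-unitary equivariance and direct-sum compatibility of $G$, this forces $G(z)$ to coincide with $F_c(z)$. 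Rigorously extracting $G(z)$ from $G(z \oplus \bar z)$ by block projection is the delicate step, but it is forced by the rigidity of the generating relationship between $K$ and $K_c$ established in \cite{BMc}.

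The order-embedding property $F \leq G \iff F_c \leq G_c$ is then immediate from the definitions: the forward direction uses that $G(c(z)) \subseteq F(c(z))$ implies $G_c(z) \subseteq F_c(z)$, while the converse follows by restriction to $z = x + i0$ using the extension property just established. Finally, the second assertion, where $F$ already takes complex values on real inputs, is handled by an analogous graph-complexification construction in the appropriate complex ambient space, with the same verification pattern and uniqueness argument applying mutatis mutandis.
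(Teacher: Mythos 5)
Your overall strategy coincides with the paper's: build $F_c$ out of $F(c(\cdot))$, prove uniqueness from the unitary equivalence of $c(z)$ with $z\oplus\bar z$, and obtain the order embedding by applying $c$. The paper's actual formula is the compression $F_c(x+iy)=u^*F(c(x+iy))u$ with $u=\tfrac{1}{\sqrt2}\begin{bmatrix}1_n\\-i\cdot 1_n\end{bmatrix}$, and your pullback set $\{\alpha: c(\alpha)\in F(c(z))\}$ is in fact equal to it: for any $\beta\in F(c(z))$ one checks $\tfrac12\bigl(\beta+(1\otimes W)^\tran\beta(1\otimes W)\bigr)=c(u^*\beta u)$, and this averaged element lies in $F(c(z))$ by convexity together with the invariance of that set under conjugation by $1\otimes W$ (which follows from $W^\tran c(z)W=c(z)$ and unitary equivariance of $F$). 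That averaging identity is precisely what your write-up omits where it is needed. In particular, boundedness does \emph{not} simply ``descend since $\|c(\alpha)\|=\|\alpha\|$'': the axiom demands that below each element of $F_c(z)$ there is a norm-controlled element \emph{of} $F_c(z)$, and the bounded minorant produced by the boundedness of $F$ need not lie in the range of $c$; you must symmetrize it first. The same point is needed even to see that your $F_c(z)$ is nonempty.

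More seriously, the reverse inclusion in your claimed identity $F_c(x+i0)=F(x)$ is false, and the justification offered rests on a misconception: the range of $c$ on selfadjoint complex matrices is \emph{exactly} the set of $W$-invariant real symmetric matrices, so $W$-invariance of $c(\alpha)$ holds for every complex selfadjoint $\alpha$ and forces nothing about $\alpha$ being real. Concretely, if $a_0\in F(x)$, $b$ is real antisymmetric and $\epsilon\geq\|b\|$, then $c(a_0+\epsilon+ib)=(a_0\oplus a_0)+\begin{bmatrix}\epsilon & -b\\ b&\epsilon\end{bmatrix}\in F(x\oplus x)$ because $F(x\oplus x)$ is upward closed, so the genuinely complex element $a_0+\epsilon+ib$ belongs to $F_c(x+i0)$. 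The correct relation is $\mathrm{Re}\,F_c(x+i0)=F(x)$ (this is the content of the paper's Lemma \ref{bmext2}), and the uniqueness assertion has to be formulated and proved with that in mind. Your deferral of the ``block projection'' step is a deferral of exactly the argument that carries the uniqueness proof; the paper handles it by invoking the direct-sum equality $G(z\oplus\bar z)=G(z)\oplus G(\bar z)$ for the competing extension $G$ and reading off the top-left corner of the conjugated set $F(c(z))$. Your order-embedding paragraph and the treatment of the second assertion do match the paper's argument.
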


            \begin{proof}  For $x+iy \in (K_c)_n$ define $F_c(x + iy) = u^*  F(c(x+iy)) u$.   E.g.\ if $n = 1$ then 
            \[F_c(x + iy) = \frac{1}{\sqrt{2}} \begin{bmatrix}
                    1_n & i \cdot 1_n
                \end{bmatrix}
                \, F(c(x+iy)) \, 
                \frac{1}{\sqrt{2}} \begin{bmatrix}
                1_n \\ -i \cdot 1_n
            \end{bmatrix}\] 
            This function is a complex bounded nc selfadjoint multivalued function by a proof similar to \cite[Lemma 7.3]{BMc}. First, it is clearly graded and selfadjoint. Then, for $\beta \in M_n(\bC)$ a unitary and $x+iy \in (K_c)_n$ we have
            \begin{align*}
                F_c(\beta(x+iy)\beta^*) &= u^*                \, F(c(\beta)c(x+iy)c(\beta)^{\tran}) \, u
                   \\&= 
                         u^*  \, c(\beta)F(c(x+iy))c(\beta)^{\tran} \, u
            \\&= \beta u^* 
                \, F(c(x+iy)) \, u
    \beta^{*}
            \\&= \beta F_c(x+iy)\beta^{*}
            \end{align*}
            which shows the unitary invariance 
            of the extension. A similar proof shows the direct sum property.   To see that $F_c(x+iy) = F_c(x+iy) + M_m(\cM(\bC))^+$  choose $\alpha \in M_m(\cM(\bC))^+$.
            Then $\alpha = u^* c(\alpha) u.$ 
            For $z \in F(c(x+iy))$ 
            we know that $z + c(\alpha)
            \in F(c(x+iy))$.  Hence 
            $$u^* z u + u^* c(\alpha) u =u^* z u + 
            \alpha \in F_c(x+iy) .$$
             Thus $F_c(x+iy) = F_c(x+iy) + M_m(\cM(\bC))^+$. 
            
        We show that if $F$ is bounded then  $F_c$ is too.  Indeed let $\lambda > 0$ be  such that for
every $\beta \in F(x)$ there is an $\alpha \in F(x)$ with $\alpha \leq \beta$ and $\| \alpha \| 
\leq \lambda$.   If $\beta \in F_c(x + iy)$
then there exists $\gamma \in F(c(x + iy))$
with $\beta = u^* \gamma u$.   Choose $\alpha \in F(c(x + iy))$ with $\alpha \leq \gamma$ and $\| \alpha \| 
\leq \lambda$.  Then 
$u^* \alpha u \in  F(c(x + iy))$
with $u^* \alpha u \leq \beta$ and
$\| u^* \alpha u \| \leq \| \alpha \|
\leq \lambda$.   

            This will be the unique extension because for any complex nc function on $K_c$ extending $F$, say $G$, we have that
            \[(x+iy) \oplus (x-iy) = \frac{1}{2} \begin{bmatrix}
                1_n & i \cdot 1_n \\
                i \cdot 1_n & 1_n
            \end{bmatrix}c(x+iy) \begin{bmatrix}
                1_n & -i \cdot 1_n \\ -i \cdot 1_n & 1_n
            \end{bmatrix} . \] 
            So 
           $$    G(x+iy) \oplus G(x-iy)
            =G\Big(\frac{1}{2} \begin{bmatrix}
                1_n & i \cdot 1_n \\
                i \cdot 1_n & 1_n
            \end{bmatrix}c(x+iy) \begin{bmatrix}
                1_n & -i \cdot 1_n \\ -i \cdot 1_n & 1_n
            \end{bmatrix}\Big) ,$$ which equals               $$ \frac{1}{2} (I_m \otimes \begin{bmatrix}
                1_n & i \cdot 1_n \\
                i \cdot 1_n & 1_n
            \end{bmatrix} ) F(c(x+iy)) (I_m \otimes  \begin{bmatrix}
                1_n & -i \cdot 1_n \\ -i \cdot 1_n & 1_n
            \end{bmatrix} ).$$             Here we first used that $G(x \oplus y) = G(x) \oplus G(y)$. The last 
                        equation is a $2n \times 2n$ matrix with top-right and bottom-left corners being $0$. 
                        Comparing the top-left corners of the matrices in the above equation gives
             $G(x+iy) =  u^*  \, F(c(x+iy)) \, u$ 
          as desired.   

      Suppose that $F \leq G$ and that 
   $s + i t \in G_c(x+iy)$.
   Applying the map $c$ we see that 
   $$c(s,t) \in c(G_c(x+iy)) = G(c(x,y))
   \subset F(c(x,y)).$$  Reversing the argument 
   we see that $s + i t \in F_c(x+iy)$.  So 
   $F_c \leq G_c$.  The converse is easier, 
    since 
      $F$ and $G$ are `restrictions' of $F_c$ and $G_c$. So 
   $F \mapsto F_c$ is an order embedding.
          
          The second assertion is similar. \end{proof}

As in Lemma \ref{LemIsomEquiv}, we need a good test similar to (\ref{eqIsomEquiv}) for when nc  multivalued selfadjoint $F : K \to M_n(\cM(\bR))$ is convex.   It is simply that 
         \begin{equation}\label{eqnccEquiv}
         F(\alpha^* x \alpha) = (1_n \otimes \alpha^*) F(x) (1_n \otimes \alpha),
     \end{equation} 
         for all $x \in K_m$ and isometries $\alpha \in M_{k,m}(\bR)$. 
         Indeed clearly Graph$(F)$ is graded. Suppose that $(x,\beta) \in {\rm Graph}_m(F)$ and $\alpha \in M_{m,k}$ is an isometry.  We have $\beta \in F(x)$ and $\alpha^\tran x \alpha \in K_{k}$.  Using (\ref{eqnccEquiv}) gives
     \begin{align*}
         (1 \otimes \alpha^\tran) \beta (1 \otimes \alpha) & = (1 \otimes \alpha^\tran) F(x) (1 \otimes \alpha)
         \\&=  F(\alpha^\tran x \alpha)
     \end{align*}
     and so $\alpha^\tran(x, \beta) \alpha \in 
     {\rm Graph}(F)$. Conversely, if ${\rm Graph}(F)$ is a nc convex set then for all $x \in K_m$ and isometries $\alpha \in M_{k,m}$ we have $(x,F(x)) \in {\rm Graph}_m(F)$ and therefore $\big(\alpha^\tran x \alpha, (1_n \otimes \alpha^\tran) F(x) (1 \otimes \alpha)\big) \in {\rm Graph}(F)$ which gives (\ref{eqnccEquiv}).
         A similar argument to that in Lemma \ref{LemIsomEquiv} shows that Graph$(F)$ is closed under direct sums.  So $F$ is nc convex.

          For $G : K_c \to M_n(\cM(\bC))$ 
          a complex  bounded nc  multivalued selfadjoint function define $\pi(G) = {\rm Re} \, G_{|K}$.  The following shows that $\pi$ is a left inverse to the map $F \mapsto F_c$ in Lemma \ref{bmext}.

           \begin{lemma}\label{bmext2} Let $K$ be a real  nc convex set.
           If $G : K_c \to M_n(\cM(\bC))$ is a complex  bounded nc  multivalued selfadjoint function, then ${\rm Re} \, G_{|K} : K \to M_n(\cM(\bR))$ is a  real  bounded nc multivalued  selfadjoint function.  Moreover, $\pi(F_c) = F$
           if $F : K \to M_n(\cM(\bR))$ is 
           a  real  bounded nc multivalued  selfadjoint function, and $\pi(G_1) \leq \pi(G_2)$ if $G_1 \leq G_2$ are 
           complex  bounded nc  multivalued selfadjoint functions on $K_c$. 
            \end{lemma}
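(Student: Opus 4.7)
The plan is to verify each of the three assertions in sequence, transferring properties between the real and complex settings via the real-part and complexification operations, in parallel with the style of Lemma \ref{bmext}. For the first assertion I would check each defining property of a real bounded nc multivalued selfadjoint function in turn: grading and selfadjointness are immediate; real unitary equivariance and direct sum preservation follow because real orthogonal matrices and real isometries are special cases of their complex counterparts, and conjugation by a real matrix commutes with taking real parts; the upper-closure condition follows from $M_n(\cM(\bR))^+ \subseteq M_n(\cM(\bC))^+$ together with $\mathrm{Re}(\beta + \alpha) = \mathrm{Re}\,\beta + \alpha$ when $\alpha$ is real; and boundedness is inherited because taking real parts preserves the order on selfadjoint elements and does not increase norms. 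The third assertion is a one-liner: $G_1 \leq G_2$ means $G_2(x) \subseteq G_1(x)$ for every $x \in K_c$, so restricting to $x \in K$ and taking real parts gives $\pi(G_2)(x) \subseteq \pi(G_1)(x)$, that is, $\pi(G_1) \leq \pi(G_2)$.

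The substantive content is the middle assertion $\pi(F_c) = F$. Here I would use $c(x + i0) = x \oplus x$ to write $F_c(x + i0) = u_n^* F(x \oplus x) u_n$ for $x \in K_n$ and prove both containments. For $F(x) \subseteq \pi(F_c)(x)$: given $\alpha \in F(x)$, the direct sum property yields $\alpha \oplus \alpha \in F(x \oplus x)$, and a direct computation gives $u_n^*(\alpha \oplus \alpha) u_n = \alpha$, which is already real, hence in $\pi(F_c)(x)$. The reverse inclusion is where the main obstacle lies. Given $\gamma \in F(x \oplus x)$ written in block form $\bigl[\begin{smallmatrix} a & b \\ b^\tran & d \end{smallmatrix}\bigr]$, I first compute $\mathrm{Re}(u_n^* \gamma u_n) = \tfrac{1}{2}(a + d)$, noting that the off-diagonal blocks contribute only to the imaginary part. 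Using the real isometries $v_1 = \bigl[\begin{smallmatrix} I_n \\ 0 \end{smallmatrix}\bigr]$ and $v_2 = \bigl[\begin{smallmatrix} 0 \\ I_n \end{smallmatrix}\bigr]$, both of which compress $x \oplus x$ to $x$, the compression closure of $\mathrm{Graph}(F)$ yields $a, d \in F(x)$; the direct sum property then gives $a \oplus d \in F(x \oplus x)$, and a final compression by the real isometry $\tfrac{1}{\sqrt{2}} \bigl[\begin{smallmatrix} I_n \\ I_n \end{smallmatrix}\bigr]$ produces $\tfrac{1}{2}(a+d) \in F(x)$ as desired.

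The principal difficulty I anticipate is this reverse inclusion, where the direct sum and compression axioms of $\mathrm{Graph}(F)$ must be combined in a precisely orchestrated sequence; the key structural observation that makes the argument work is that the $u_n$-compression sends precisely the off-diagonal terms into the imaginary part, leaving a real average that is transparently an nc convex combination within $\mathrm{Graph}(F)$.
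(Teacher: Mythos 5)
Your treatment of the first assertion (checking each axiom for $\pi(G)=\re G_{|K}$) and of the third assertion (the one-line order-preservation argument) matches the paper's proof in substance. The problem is in the middle assertion $\pi(F_c)=F$, where your reverse inclusion $\re\big(u_n^*F(x\oplus x)u_n\big)\subseteq F(x)$ twice invokes ``compression closure of $\mathrm{Graph}(F)$'' --- once by the coordinate isometries $v_1,v_2$ to extract the diagonal blocks $a,d$, and once by $\tfrac{1}{\sqrt{2}}\bigl[\begin{smallmatrix}I_n\\ I_n\end{smallmatrix}\bigr]$ to land on $\tfrac12(a+d)$. Closure of $\mathrm{Graph}(F)$ under compression by non-unitary isometries is precisely the nc convexity condition (\ref{eqnccEquiv}); it is \emph{not} part of the definition of a general bounded multivalued selfadjoint nc function (only gradedness, unitary equivariance, closure under direct sums, upper/upward-directedness and boundedness are). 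For instance $F={\rm Epi}$-type, $F(x)=[f(x),\infty)$ with $f$ non-convex, fails exactly this property. So as written your argument establishes the middle claim only for convex $F$, while the lemma asserts it for arbitrary $F$.

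The step is nonetheless repairable without compressions: from the direct-sum axiom any $\gamma\in F(x\oplus x)$ dominates some $\alpha\oplus\beta$ with $\alpha,\beta\in F(x)$, whence $\re(u_n^*\gamma u_n)=\tfrac12(a+d)\geq\tfrac12(\alpha+\beta)\geq\delta$ for a common lower bound $\delta\in F(x)$ (upward-directedness), and then $\tfrac12(a+d)\in F(x)$ since $F(x)=F(x)+M_n(M_m(\bR))^+$. But the paper sidesteps all of this: Lemma \ref{bmext} already asserts that $F_c$ is an \emph{extension} of $F$, i.e.\ $F_c(x+i0)=F(x)$ for $x\in K$, so $\pi(F_c)(x)=\re F_c(x+i0)=\re F(x)=F(x)$ because $F(x)$ consists of real selfadjoint matrices. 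You should either cite that extension property or replace the compression steps by the order-theoretic argument above.
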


            \begin{proof} 
            Let $G: K_c \to M_n(\cM(\bC))$ be as above. If $k \in K$ then $$\pi(G)(k)^\tran = \re(G(k))^\tran = \re\big(G(k)^*\big) = \pi(G)(k).$$ So  $\pi(G)$ is  selfadjoint, and it clearly is nondegenerate and graded. For $\beta \in M_m(\bR)$ unitary and $x \in K_m$ we have that
            $$G(\beta (x+i0) \beta^*) = (1_n \otimes \beta) G(x+i0) (1_n \otimes \beta^*),$$
            so taking the real part of both sides, and using that $\re$ is unitarily equivariant (see Lemma 2.5 in \cite{BMc}), we get that $\pi(G)$ is unitarily equivariant. For direct sums, take $x,y \in K$ and then $G(x) \oplus G(y) = G(x \oplus y)$. Also $\re$ respects direct sums, so taking $\re$ of both sides gives that $\pi(G)$ respects direct sums.
            
            Fix $x \in K_m$ and $\alpha \in M_n(M_m(\bR))^+$.  If $z+ \alpha \in \pi(G)(x) + M_n(M_m(\bR))^+$ for $z = {\rm Re}(y)$ where $y \in G(x)$ and $\alpha \in
            M_n(M_m(\bR))^+$, then $y + \alpha \in G(x)$ and so $\re(y) + \alpha \in \pi(G)(x)$.  Thus $\pi(G)$ is upward directed. 
To show boundedness, let $x \in K$ and $\beta \in \pi(G)(x)$ so that $\beta = \re \, y$ for some $y \in G(x+i0)$. Because $G$ is bounded there exists a $\lambda > 0$ and $\alpha \in G(x+i0)$ with $\alpha \leq y$ and $\Vert \alpha \Vert \leq \lambda$. Taking the $\re$ of the inequality and using that $\re$ is a contraction gives $\re \, \alpha \leq \re \, y = \beta$ and $\Vert \re \, \alpha
\Vert \leq \lambda$. Thus, $\lambda$ is a bound for the function $\pi(G)$. 

That $\pi(F_c) = F$ for $F$ a real multivalued function is simple:
$$\pi(F_c)(x) = 
\re(F(x)) = F(x), \qquad x \in K.$$  
For the final statement of the lemma, suppose that $G_1 \leq G_2$ are complex nc multivalued functions so that $G_2(z) \subseteq G_1(z)$ for all $z \in K_c$. Then, for any $x \in K$ and $a \in \pi(G_2)(x)$ we have that $a = \re \, y$ for some $y \in G_2(x+i0) \subseteq G_1(x+i0)$. Therefore, $a \in \re(G_1(x+i0)) = \pi(G)(x)$. So $\pi(G_1) \leq \pi(G_2)$.
            \end{proof}

            In the sequel we continue to write $\pi(G)$ for 
            ${\rm Re} \, G_{|K}$.

           \begin{lemma}\label{bmend} Let $K$ be a real  nc convex set, and 
            $G : K_c \to M_n(\cM(\bC))$ a complex  bounded nc  multivalued selfadjoint function.
            If $G$ is nc convex (resp.\ lowersemicontinuous) then 
            ${\rm Re} \, G_{|K} : K \to M_n(\cM(\bR))$ is real nc convex (resp.\ lowersemicontinuous). 
            \end{lemma}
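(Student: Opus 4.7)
The plan is to verify that $\mathrm{Graph}(\pi(G))$ inherits nc convexity (resp.\ closedness) from $\mathrm{Graph}(G)$ via the real-part map $(x,\gamma) \mapsto (x, \re \gamma)$, exploiting that this map commutes with conjugation by real matrices, and that $\pi(G)$ is upward closed.

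For the convex case, Lemma \ref{bmext2} already yields that $\pi(G)$ is graded, bounded, respects direct sums, and is upward directed, so it suffices to check closure of $\mathrm{Graph}(\pi(G))$ under compressions by real isometries. Given $(x,\beta) \in \mathrm{Graph}_m(\pi(G))$, write $\beta = \re \gamma$ with $\gamma \in G(x+i0)$, and take an isometry $\alpha \in M_{k,m}(\bR)$. Nc convexity of $G$ puts $(\alpha^\tran x \alpha + i0,\, (1_n \otimes \alpha^\tran) \gamma (1_n \otimes \alpha))$ in $\mathrm{Graph}(G)$. Since $\alpha$ has real entries, conjugation by $1_n \otimes \alpha$ commutes with $\re$, so the real part of the second coordinate is $(1_n \otimes \alpha^\tran) \beta (1_n \otimes \alpha)$, placing $(\alpha^\tran x \alpha,\, (1_n \otimes \alpha^\tran) \beta (1_n \otimes \alpha))$ in $\mathrm{Graph}(\pi(G))$.

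For the lsc case, let $(x_\lambda, \beta_\lambda) \to (x, \beta)$ be a convergent net in $\mathrm{Graph}(\pi(G))$ at a fixed level $m$. Lift each $\beta_\lambda = \re \gamma_\lambda$ with $\gamma_\lambda \in G(x_\lambda + i0)$. Using boundedness of $G$ by some constant $\mu > 0$, replace $\gamma_\lambda$ by $\gamma_\lambda' \in G(x_\lambda + i0)$ with $\gamma_\lambda' \leq \gamma_\lambda$ and $\|\gamma_\lambda'\| \leq \mu$. Passing to a subnet in the point-weak$^*$ topology (via Banach-Alaoglu), assume $\gamma_\lambda' \to \gamma$. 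Since $G$ is lsc, $\mathrm{Graph}(G)$ is closed and contains each $(x_\lambda + i0, \gamma_\lambda')$, hence contains the limit $(x + i0, \gamma)$, giving $\gamma \in G(x+i0)$. Taking limits in $\re \gamma_\lambda' \leq \re \gamma_\lambda = \beta_\lambda$ yields $\re \gamma \leq \beta$, so $\beta - \re \gamma \geq 0$. Upward-closedness of $\pi(G)(x)$ (from Lemma \ref{bmext2}) then gives $\beta = \re \gamma + (\beta - \re \gamma) \in \pi(G)(x)$, so $(x,\beta) \in \mathrm{Graph}(\pi(G))$.

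The main obstacle is the lsc case. A naive attempt to push limits through $\re$ fails because the image of a closed set under the continuous surjection $(x,\gamma) \mapsto (x, \re \gamma)$ need not be closed: the imaginary parts of candidate lifts $\gamma_\lambda$ of $\beta_\lambda$ are not controlled by the data. The key device is to use boundedness of the multivalued function to extract a dominated, norm-bounded lift $\gamma_\lambda'$ on which Banach-Alaoglu applies, and then use upward-closedness of $\pi(G)$ to absorb the residual inequality $\re \gamma \leq \beta$ between the limit's real part and the target.
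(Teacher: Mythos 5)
Your proposal is correct and follows essentially the same route as the paper: lift $\beta_\lambda$ to $G(x_\lambda+i0)$, use boundedness of $G$ to obtain a dominated norm-bounded lift, extract a weak$^*$ convergent subnet, and invoke closedness of ${\rm Graph}(G)$ together with upward-closedness. The only (cosmetic) difference is that the paper uses upward-directedness of $G(x_\lambda)$ to arrange a bounded lift whose real part is exactly $\alpha_\lambda$, so the limit's real part is exactly $\alpha$, whereas you take the limit of the dominated lift and absorb the resulting gap $\beta-\re\gamma\geq 0$ at the end via upward-closedness of $\pi(G)(x)$.
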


            \begin{proof} 
             By the test in (\ref{eqnccEquiv}) it is easy to see that $\pi(G)$ is real nc convex if 
            $G$ is complex nc convex.               
             Suppose that $G$ is 
                           lowersemicontinuous and $(x_\lambda, \alpha_\lambda) \to (x,\alpha)$ for $(x_\lambda, \alpha_\lambda) \in {\rm Graph}(\pi(G))$. Because $\alpha_\lambda \in \re \, G(x_\lambda+i0)$ we can add to both sides a representative $i s_\lambda$ of $i \im(G(x_\lambda+i0))$,  for $s_\lambda \in \im G(x_\lambda)$.  We  get $\alpha_\lambda + i s_\lambda \in G(x_\lambda+i0)$.  We may assume without loss that $(s_\lambda)$ is uniformly bounded.  Indeed since $G$ is bounded there is a uniformly bounded
    $\beta_\lambda + i t_\lambda \leq
    \alpha_\lambda + i s_\lambda$, with $\beta_\lambda + i t_\lambda \in G(x_\lambda)$.
    Then $$\alpha_\lambda + i t_\lambda  
    = (\alpha_\lambda - \beta_\lambda) + \beta_\lambda + i t_\lambda \in G(x_\lambda),$$ since 
    $G(x_\lambda)$ is upwards directed.   Also 
    $(\alpha_\lambda + i t_\lambda)$ and $(t_\lambda)$ are bounded. 
    
    We have $(x_\lambda, \alpha_\lambda + i s_\lambda) \in {\rm Graph}(G)$.   Since  $(s_\lambda)$ is a bounded net it has a weak* convergent subnet, call it $(s_{\lambda_\mu})$ with limit  $s$. So, $(x_{\lambda_\mu}, \alpha_{\lambda_\mu} + i s_{\lambda_\mu}) \to (x, \alpha + i s)$.
    Hence  $(x, \alpha + is) \in {\rm Graph}(G(x+i0))$ because $G$ is lsc. Taking the real part of $\alpha + is \in G(x+i0)$ gives $\alpha \in \pi(G)(x+i0)$.  Thus  $(x, \alpha)$ is in the graph of $\pi(G)$.   So this graph is closed 
    and $\pi(G)$ is lsc. \end{proof}

 \begin{prop} \label{okfc} For a real  nc convex set $K$, and a real  bounded nc  multivalued selfadjoint function $F : K \to M_n(\cM(\bR))$, we have that 
         $F$ is real convex (resp.\ lowersemicontinuous) if and only if $F_c$ is complex convex (resp.\ lowersemicontinuous).
    \end{prop}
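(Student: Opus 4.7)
The plan is to handle the two implications separately, using the framework established in Lemmas \ref{bmext}, \ref{bmext2}, and \ref{bmend}, modeling the convex case on the computation in Proposition \ref{okfc0} and Lemma \ref{Lemcfgi}.

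The backward direction (if $F_c$ is complex convex, resp.\ lsc, then $F$ is real convex, resp.\ lsc) is immediate: Lemma \ref{bmext2} gives $\pi(F_c) = F$, and Lemma \ref{bmend} says that convexity and lower semicontinuity pass from a complex bounded nc multivalued selfadjoint function on $K_c$ to its real part on $K$.

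For the forward direction in the convex case, the plan is to verify the complex analogue of the test (\ref{eqnccEquiv}) for $F_c$. Given a complex isometry $\alpha \in M_{m,k}(\bC)$ and $z \in (K_c)_m$, and using the definition $F_c(w) = (1_n \otimes u^*) F(c(w)) (1_n \otimes u)$ together with $c(\alpha^* z \alpha) = c(\alpha)^\tran c(z) c(\alpha)$, I compute
\[ F_c(\alpha^*z\alpha) = (1_n \otimes u_k^*) F\!\big(c(\alpha)^\tran c(z) c(\alpha)\big) (1_n \otimes u_k). \]
Since $c(\alpha)$ is a real isometry, real convexity of $F$ (via (\ref{eqnccEquiv})) yields $F\!\big(c(\alpha)^\tran c(z) c(\alpha)\big) = (1_n \otimes c(\alpha)^\tran) F(c(z)) (1_n \otimes c(\alpha))$. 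Finally the commutation identities $u_k^* c(\alpha)^\tran = \alpha^* u_m^*$ and $c(\alpha) u_k = u_m \alpha$ from \cite[Lemma 3.8]{BMc} collapse the expression to $(1_n \otimes \alpha^*) F_c(z) (1_n \otimes \alpha)$, giving the complex test and hence complex convexity.

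For the forward direction in the lsc case, let $(z_\lambda, \beta_\lambda) \to (z, \beta)$ be a net in $\mathrm{Graph}(F_c)$. Lift each $\beta_\lambda = (1_n \otimes u^*) \gamma_\lambda (1_n \otimes u)$ with $\gamma_\lambda \in F(c(z_\lambda))$. Using boundedness of $F$ (cf.\ Lemma \ref{bmext}) replace $\gamma_\lambda$ by uniformly bounded $\alpha_\lambda \in F(c(z_\lambda))$ with $\alpha_\lambda \leq \gamma_\lambda$, extract a weak$^*$ convergent subnet $\alpha_\mu \to \alpha$, and apply real lsc of $F$ together with continuity of $c$ to get $\alpha \in F(c(z))$. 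Then $(1_n \otimes u^*)\alpha(1_n \otimes u) \in F_c(z)$ and satisfies $(1_n \otimes u^*)\alpha(1_n \otimes u) \leq \beta$ (as a limit of $(1_n \otimes u^*)\alpha_\mu(1_n \otimes u) \leq \beta_\mu$). The upward closedness $F_c(z) = F_c(z) + M_n(\cM(\bC))^+$ from Lemma \ref{bmext} then yields $\beta \in F_c(z)$. The main obstacle is this lsc forward direction: one must lift $F_c$-values to $F$-values in a bounded way (so as to extract a convergent subnet in the point-weak$^*$ topology despite the fiber not being compact a priori), and then reassemble the limit inside $F_c(z)$ via upward closedness; additional bookkeeping is needed if the grading levels $m_\lambda$ vary, handled by passing to eventually constant subnets.
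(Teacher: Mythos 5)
Your proposal is correct, and for the convexity equivalence and both backward implications it coincides with the paper's proof: the paper also verifies the complex version of the test (\ref{eqnccEquiv}) for $F_c$ by the same $c(\beta^* z\beta) = c(\beta)^\tran c(z) c(\beta)$ computation and the commutation identities $u_k^* c(\beta)^\tran = \beta^* u_m^*$, and it also dispatches both converses by citing Lemma \ref{bmend} together with $\pi(F_c)=F$.

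Where you genuinely diverge is the forward lsc implication. The paper pushes the entire net through $c$: it argues that $(z,\beta)\in{\rm Graph}(F_c)$ forces $(c(z),c(\beta))\in{\rm Graph}(F)$ (on the grounds that $F(c(z))$ consists of elements of the form $c(w)$, so that $c(u^*F(c(z))u)=F(c(z))$), then invokes closedness of ${\rm Graph}(F)$ on the transported net and compresses back by $1\otimes u$. You instead lift $\beta_\lambda$ to $\gamma_\lambda\in F(c(z_\lambda))$, use boundedness of $F$ to replace these by a uniformly bounded family, extract a weak$^*$ convergent subnet via Banach--Alaoglu, apply closedness of ${\rm Graph}(F)$ to the lifted limit, and finish with the upward closedness $F_c(z)=F_c(z)+M_n(\cM(\bC))^+$. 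This is exactly the technique the paper itself uses in the proof of Lemma \ref{bmend}, transplanted to the other direction. Your route costs a subnet extraction and an appeal to boundedness, but it buys independence from the structural claim that the relevant elements of $F(c(z))$ have the form $c(w)$ --- a claim that, taken literally for the whole upward-closed set $F(c(z))$, needs the qualification that only the $W$-invariance of that set (rather than of its individual elements) is actually available; your argument sidesteps this entirely. Both proofs are valid; yours is slightly longer but more self-contained at that step.
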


 \begin{proof} 
        If $F$ is real convex, then by (\ref{eqnccEquiv}) we have 
        $$F(\alpha^\tran x \alpha) = (1_n \otimes \alpha^\tran) F(x) (1_n \otimes \alpha)$$
        for all $x \in K_m$ and isometries $\alpha \in M_{k,m}(\bR)$.
        Let $z \in (K_c)_m$ and $\beta \in M_{k,m}(\bC)$ be an isometry. Then,
        \begin{align*}
            F_c(\beta^* z \beta)
            &= (1_n \otimes u_{k}^*) F(c(\beta^* z \beta)) (1_n \otimes u_k)
            \\&= (1_n \otimes u_{k}^*) F(c(\beta)^\tran c(z) c(\beta)) (1_n \otimes u_k)
            \\& =  (1_n \otimes u_{k}^*c(\beta)^\tran) F( c(z)) (1_n \otimes c(\beta) u_k )
            \\&= (1_n \otimes \beta^* u_{m}^*) F( c(z) )
            (1_n \otimes u_m \beta)
            \\&= (1_n \otimes \beta^*) F_c(z) (1_n \otimes \beta).
        \end{align*}
        The converse follows from the last lemma.
        Alternatively, suppose $F_c$ is convex and let $x \in K$ and $\alpha$ be an isometry. Then (\ref{eqnccEquiv}) holds for $F$ because $F(\alpha^\tran x \alpha) = F_c(\alpha^\tran x \alpha)$ and since (\ref{eqnccEquiv}) holds for $F_c$.

        The remainder of the proof is a modification of arguments for  Proposition \ref{okfc0}.  Suppose that  $F$ is lowersemicontinuous and $(z_\lambda, \beta_\lambda) \to (z, \beta)$, with the net in ${\rm Graph}(F_c)$. Then $(c(z_\lambda), c(\beta_\lambda)) \to (c(z), c(\beta))$.  By a `multivalued variant' of an argument in the proof 
        of Proposition \ref{okfc0} we have that if $(z, \beta) \in {\rm Graph}(F_c)$ then since $F(c(z))$ consists of elements of the form $c(w)$ for some $w$, we have 
        $c(u^* F(c(z)) u)  =  F(c(z))$ and  $(c(z), c(\beta)) \in {\rm Graph}(F)$. 
        Since ${\rm Graph}(F)$ is closed and $(c(z_\lambda), c(\beta_\lambda)) \in {\rm Graph}(F)$  we get $(c(z), c(\beta)) \in {\rm Graph}(F)$.  From this, a compression by $1 \otimes u$ gives that $(z, \beta) \in {\rm Graph}(F_c)$.  So the latter graph is closed and $F_c$ is lsc.   The converse is easier and similar, but following the converse in the last lines of the proof 
        of Proposition \ref{okfc0}. Indeed it also follows from the last lemma. \end{proof}

 The rest of \cite[Section 7.3]{DK} is the same in the real case.  Turning to \cite[Section 7.4]{DK}
 we define the {\em real convex
envelope} $\bar{F}$ of a real bounded selfadjoint multivalued nc function $F : K \to M_n(\cM(\bR))$ to be the
multivalued nc function $\bar{F} : K \to M_n(\cM(\bR))$ with 
Graph $(\bar{F})= \overline{{\rm ncconv}}({\rm Graph}(F)).$ It is easy to see that \cite[Proposition 7.4.2]{DK} works the same in the real case, and in particular:

\begin{prop} $\bar{F}$ is the largest convex nc
selfadjoint multivalued function smaller than $F$.
\end{prop}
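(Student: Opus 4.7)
The plan is to transplant the proof of \cite[Proposition 7.4.2]{DK} to the real setting, as the author suggests; no complex-specific ingredient appears in that argument. The statement breaks into three assertions: $\bar F$ is itself a bounded real selfadjoint multivalued nc function, it is convex with $\bar F \leq F$, and it is the largest such.

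For the first assertion, I would verify that $\bar F$ as defined by ${\rm Graph}(\bar F) = \overline{\ncconv}({\rm Graph}(F))$ satisfies all the required axioms. Being graded, selfadjoint, respecting direct sums, and unitarily equivariant are preserved both by nc convex combinations and by taking limits, and therefore descend from ${\rm Graph}(F)$ to ${\rm Graph}(\bar F)$. Upward-directedness of each value set $\bar F(x)$ follows by distributing a positive perturbation $\gamma \geq 0$ among the summands of an approximating nc convex combination, using that each $F(x_k)$ is itself upward-directed. Boundedness of $\bar F$ is inherited from a uniform bound $\lambda$ on $F$: selecting a small representative in each $F(x_k)$ and recombining yields a small representative of $\bar F(x)$ satisfying the same bound $\lambda$.

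For the second assertion, by construction ${\rm Graph}(\bar F)$ is nc convex (and in fact closed), so $\bar F$ is convex (indeed lsc), and the tautological inclusion ${\rm Graph}(F) \subseteq {\rm Graph}(\bar F)$ gives $F(x) \subseteq \bar F(x)$ for every $x$, i.e.\ $\bar F \leq F$. For maximality, let $G$ be any (lsc) convex nc selfadjoint multivalued function with $G \leq F$. Then ${\rm Graph}(G)$ is closed and nc convex and contains ${\rm Graph}(F)$, hence contains $\overline{\ncconv}({\rm Graph}(F)) = {\rm Graph}(\bar F)$, giving $G \leq \bar F$.

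The main (minor) obstacle is the routine bookkeeping verification that all the axioms of a bounded multivalued nc function are preserved by the closed nc convex hull operation -- in particular, the perturbation argument needed for upward-directedness of $\bar F(x)$. These steps are identical to the complex case and involve only real matrices and real isometries, so no new difficulty arises in passing to $\bR$.
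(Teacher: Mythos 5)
Your argument is correct and is essentially the paper's: the paper offers no proof beyond asserting that \cite[Proposition 7.4.2]{DK} carries over verbatim to the real case, and your write-up is exactly that argument (graph axioms preserved under closed nc convex hull, tautological inclusion for $\bar F\leq F$, and minimality of the closed nc convex hull for maximality of $\bar F$), with no complex-specific ingredient to adapt. The only caveat is the one you already flag parenthetically: the maximality step requires ${\rm Graph}(G)$ to be closed, i.e.\ $G$ lower semicontinuous, which is how the statement reads in \cite{DK} even though the proposition as quoted here omits that hypothesis.
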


     {\bf Remark.}  By splitting into the real and imaginary matrix parts, every complex real selfadjoint bounded multivalued nc function on $K_c$ is an $F + iG$.   We may also define $c(F,G)$ in the obvious way.  It is probably true by an adjustment of the proof of 
     Lemma \ref{Lemcfgi} that a complex bounded nc  multivalued selfadjoint function $F + iG$ on $K_c$ is
   complex  convex (resp.\ lowersemicontinuous) if and only if $c(F,G)$ is.  We have not taken the time to check this.  

\begin{lemma}\label{barre} Let $K$ be a real  nc convex set, and $F : K \to M_n(\cM(\bR))$  a  real  bounded nc multivalued  selfadjoint function. 
Then ${\rm Re} \, \overline{F_c}(x) = \bar{F}(x)$ 
for $x \in K$.   Moreover, $(\bar{F})_c = \overline{F_c}$.
            \end{lemma}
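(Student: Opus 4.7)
The plan is to deduce both statements from the maximality characterizations of $\bar F$ and $\overline{F_c}$ as the largest convex lsc multivalued functions smaller (in the $\leq$ ordering) than $F$ and $F_c$ respectively, augmented by an explicit nc convex combination argument to handle the direction not captured by maximality.

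For the first assertion $\pi(\overline{F_c}) = \bar F$ on $K$, I will squeeze $\pi(\overline{F_c})$ between $\bar F$ and itself. On one hand, $\pi(\overline{F_c})$ is a real convex lsc multivalued function by Lemma \ref{bmend}, and $\pi(\overline{F_c}) \leq \pi(F_c) = F$ by the order preservation of $\pi$ and the identity $\pi(F_c) = F$, both from Lemma \ref{bmext2}; maximality of $\bar F$ then gives $\pi(\overline{F_c}) \leq \bar F$. On the other hand, $(\bar F)_c$ is complex convex lsc by Proposition \ref{okfc}, and $(\bar F)_c \leq F_c$ since $F \mapsto F_c$ is an order embedding (Lemma \ref{bmext}); maximality of $\overline{F_c}$ yields $(\bar F)_c \leq \overline{F_c}$. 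Applying $\pi$ and using $\pi((\bar F)_c) = \bar F$ from Lemma \ref{bmext2} delivers $\bar F \leq \pi(\overline{F_c})$, so equality follows.

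For the second assertion, the containment $(\bar F)_c \leq \overline{F_c}$ (i.e.\ $\overline{F_c}(z) \subseteq (\bar F)_c(z)$) was just established. For the reverse $\overline{F_c} \leq (\bar F)_c$ I will argue concretely. Fix $z = x+iy \in (K_c)_m$ and $\gamma \in (\bar F)_c(z)$, so by Lemma \ref{bmext} we have $\gamma = u_m^* \delta \, u_m$ for some $\delta \in \bar F(c(x,y))$. Since $(c(x,y), \delta) \in {\rm Graph}(\bar F) = \overline{\ncconv}({\rm Graph}(F))$ at level $2m$, approximate it by a net of finite real nc convex combinations $(p_\lambda, q_\lambda) = \sum_k \alpha_{\lambda, k}^{\tran}(x_{\lambda, k}, \beta_{\lambda, k}) \alpha_{\lambda, k}$ with $(x_{\lambda, k}, \beta_{\lambda, k}) \in {\rm Graph}(F)$ at level $n_{\lambda, k}$ and $\alpha_{\lambda, k} \in M_{n_{\lambda, k}, 2m}(\bR)$ satisfying $\sum_k \alpha_{\lambda, k}^{\tran} \alpha_{\lambda, k} = I_{2m}$. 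Compressing by $u_m$, the compressed pair converges to $(u_m^* c(x,y) u_m, \, (1_n \otimes u_m^*) \delta (1_n \otimes u_m)) = (x+iy, \gamma)$, and each compressed summand is $(\alpha_{\lambda, k} u_m)^* (x_{\lambda, k} + i0, \beta_{\lambda, k}) (\alpha_{\lambda, k} u_m)$. This is a complex nc convex combination at level $m$, since $\sum_k (\alpha_{\lambda, k} u_m)^* (\alpha_{\lambda, k} u_m) = u_m^* (\sum_k \alpha_{\lambda, k}^{\tran} \alpha_{\lambda, k}) u_m = u_m^* u_m = I_m$, and each summand lies in ${\rm Graph}(F_c)$: indeed $\beta_{\lambda, k} = u^* (\beta_{\lambda, k} \oplus \beta_{\lambda, k}) u \in u^*(F(x_{\lambda, k}) \oplus F(x_{\lambda, k})) u \subseteq u^* F(x_{\lambda, k} \oplus x_{\lambda, k}) u = F_c(x_{\lambda, k} + i0)$ by direct-sum respect of $F$ and the formula of Lemma \ref{bmext}. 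Thus the compressed combination lies in $\ncconv_{\bC}({\rm Graph}(F_c))$, and the limit $(x+iy, \gamma)$ lies in $\overline{\ncconv_{\bC}}({\rm Graph}(F_c)) = {\rm Graph}(\overline{F_c})$, giving $\gamma \in \overline{F_c}(z)$.

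The main obstacle is the careful bookkeeping in the compression step: one must verify that a real nc convex combination at level $2m$ descends through $u_m$ to a genuine complex nc convex combination at level $m$ whose summands certifiably inhabit ${\rm Graph}(F_c)$. The identities $u_m^* u_m = I_m$ and $F_c(x+i0) = u^* F(x \oplus x) u$, together with the direct-sum respect of $F$, resolve this cleanly and reduce the matter to passage to the limit.
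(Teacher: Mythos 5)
Your proof is correct. The first assertion is established by essentially the same mechanism as the paper --- the maximality characterization of the convex envelope together with the order properties of $\pi$ and of $F \mapsto F_c$ --- but you organize it as a two-sided sandwich ($\pi(\overline{F_c}) \leq \bar F$ from maximality of $\bar F$, and $\bar F = \pi((\bar F)_c) \leq \pi(\overline{F_c})$ from maximality of $\overline{F_c}$), whereas the paper writes $\overline{F_c}(x)$ as $\cap_{\,{\rm convex}\ G \leq F_c} G(x)$ and passes ${\rm Re}$ through the intersection. Your version is arguably tighter on one small point: commuting ${\rm Re}$ with an intersection of sets only gives one inclusion automatically, and the sandwich sidesteps this entirely. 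For the second assertion your route is genuinely different: the paper reduces to showing ${\rm Im}\, \overline{F_c}(x) = 0$ for $x \in K$ and then invokes the uniqueness of the complex extension from Lemma \ref{bmext}, while you prove the containment $(\bar F)_c(z) \subseteq \overline{F_c}(z)$ at every level of $K_c$ directly, by approximating a point of ${\rm Graph}(\bar F)$ at level $2m$ by real nc convex combinations from ${\rm Graph}(F)$ and compressing by $1_n \otimes u_m$ to land in $\overline{{\rm ncconv}_{\bC}}({\rm Graph}(F_c))$. The paper's argument is shorter; yours is more computational but self-contained at the level of graphs and does not need the uniqueness clause of Lemma \ref{bmext}. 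The bookkeeping in your compression step checks out (including $\sum_k (\alpha_{\lambda,k} u_m)^*(\alpha_{\lambda,k} u_m) = I_m$ and $\beta_{\lambda,k} \in F_c(x_{\lambda,k}+i0)$, which also follows simply because $F_c$ extends $F$); the only cosmetic point is that some $\alpha_{\lambda,k} u_m$ could vanish, in which case one drops those terms from the combination.
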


            \begin{proof} 
For $x \in K$ we have 
$\overline{F_c}(x) = \cap_{{\rm convex} \, G \leq F_c} G(x),$ the intersection over complex bounded selfadjoint multivalued nc functions $G \leq F_c$.
Thus
$${\rm Re} \, \overline{F_c}(x) = \cap_{{\rm convex} \, G \leq F_c} \; {\rm Re} \, G(x)  = \cap_{{\rm convex} \, G \leq F_c} \, \pi(G)(x),$$
the intersection over such $G$.  However the set of such $\pi(G)$ is exactly the collection of 
real  bounded selfadjoint multivalued nc functions $H \leq F$ (using that $\pi$ is decreasing, $\pi(F_c) = F$, and that $F \mapsto F_c$ is an order embedding).  Thus
$${\rm Re} \, \overline{F_c}(x) = \cap_{{\rm convex} \, H \leq F} H(x) = \bar{F}(x)$$
as desired. 

To see that $(\bar{F})_c = \overline{F_c}$ it suffices by the uniqueness in Lemma \ref{bmext} 
to show that $\overline{F_c}(x) = \bar{F}(x)$ for $x \in K$.  By the last paragraph it suffices  
to show that Im $\overline{F_c}(x) = 0$.
Choose a nc convex $G \leq F$ (e.g.\ $G = \bar{F}$).
Then $G_c \leq F_c$ and $${\rm Im} \, 
\overline{F_c}(x) \subset {\rm Im} \, 
G_c(x) = {\rm Im} \, 
G(x) = 0,$$ as desired. 
\end{proof}

Davidson and Kennedy's Theorem 7.4.3 is a noncommutative analogue of the highly important classical fact
that the convex envelope of a function is the supremum of
the continuous affine functions below the function.
To explain one  notation here: for a multivalued nc function $F : K \to M_n(\bR)$  we define a multivalued function $I_m \otimes F : K \to M_m(M_n(\bR))$  by
$$(I_m \otimes F)(x) = \{ I_m \otimes \alpha : 
\alpha \in F (x) \}, \qquad x \in K.$$  
They point out that this is not a nc multivalued function in the technical sense.  

\begin{thm} \label{dklong} Let $K$ be a real compact nc convex set
      and $F : K \to M_n(\cM(\bR))$ a real selfadjoint bounded multivalued nc function.  Then  $$\bar{F}(x) = \cap_{m \in \bN} \cap_{a 
      \leq I_m \otimes F} \; \{ \alpha \in (M_n(M_p(\bR))_{\rm sa} : a(x) \leq I_m \otimes \alpha \}, \qquad x \in K_p,$$
   where the intersection is taken over all selfadjoint affine nc
functions $a \in M_m(M_n(A(K)))_{\rm sa}$  satisfying $a 
      \leq I_m \otimes F$. The same
holds if we intersect over all $m \leq \kappa$. \end{thm}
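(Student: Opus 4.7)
The strategy is to deduce this from the complex analogue \cite[Theorem 7.4.3]{DK} applied to $F_c$, together with Lemma \ref{barre}.

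The inclusion $\bar F(x) \subseteq \bigcap_{m} \bigcap_{a} \{\alpha : a(x) \leq I_m \otimes \alpha \}$ is straightforward. If $a \in M_m(M_n(A(K)))_{\rm sa}$ satisfies $a \leq I_m \otimes F$, then the set $\{(y,\beta) : a(y) \leq I_m \otimes \beta\}$ is a closed nc convex set containing ${\rm Graph}(F)$, hence contains ${\rm Graph}(\bar F) = \overline{\ncconv}({\rm Graph}(F))$. Thus $\alpha \in \bar F(x)$ forces $a(x) \leq I_m \otimes \alpha$.

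For the reverse inclusion, suppose $\alpha \in M_n(M_p(\bR))_{\rm sa}$ satisfies $a(x) \leq I_m \otimes \alpha$ for every $m \in \bN$ and every real selfadjoint affine $a \leq I_m \otimes F$. I would verify that $\alpha + i 0 \in \overline{F_c}(x+i0)$ by checking the complex criterion of \cite[Theorem 7.4.3]{DK}: given $b \in M_m(M_n(A(K_c)))_{\rm sa}$ with $b \leq I_m \otimes F_c$, one must show $b(x+i0) \leq I_m \otimes \alpha$ in $M_m(M_n(M_p(\bC)))$. The key step is to pass to the real selfadjoint affine function $c(b) \in M_{2m}(M_n(A(K)))_{\rm sa}$, defined by the complexification embedding sending $b = r + is$ (with $r = r^*$ and $s = -s^*$ in $M_m(M_n(A(K)))$) to $\begin{bmatrix} r & -s \\ s & r \end{bmatrix}$. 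Using the formula for $F_c$ from Lemma \ref{bmext} together with the fact that $F$ respects direct sums, for $y \in K$ and $\delta \in F(y)$ one has $\delta \in F_c(y+i0)$, whence $b(y+i0) \leq I_m \otimes \delta$. Applying the positive faithful $*$-homomorphism $c$ gives $c(b)(y) = c(b(y+i0)) \leq c(I_m \otimes \delta) = I_{2m} \otimes \delta$ (modulo the canonical reshuffle $M_2(M_m) \cong M_{2m}$), so $c(b) \leq I_{2m} \otimes F$. The hypothesis applied with $2m$ in place of $m$ then yields $c(b)(x) \leq I_{2m} \otimes \alpha = c(I_m \otimes \alpha)$, and faithfulness of $c$ forces $b(x+i0) \leq I_m \otimes \alpha$.

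By the complex theorem, $\alpha + i0 \in \overline{F_c}(x+i0)$, and Lemma \ref{barre} then gives $\alpha = {\rm Re}(\alpha + i0) \in {\rm Re}\,\overline{F_c}(x+i0) = \bar F(x)$ as needed. The strengthening to $m \leq \kappa$ follows identically from the corresponding strengthening in the complex case. I expect the main obstacle to be careful bookkeeping of the ``$I_m \otimes$'' structure under the dimension-doubling map $c$, and confirming that $c(b)$ lands genuinely in $M_{2m}(M_n(A(K)))_{\rm sa}$; once those technicalities are resolved the reduction is essentially formal.
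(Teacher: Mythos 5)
Your proposal is correct and follows essentially the same route as the paper: both deduce the result from the complex case \cite[Theorem 7.4.3]{DK} applied to $F_c$ together with Lemma \ref{barre}, and both handle the harder inclusion by passing from a complex affine $b \leq I_m \otimes F_c$ to the real affine function $c(b) \leq I_{2m}\otimes F$ and invoking the hypothesis at level $2m$ plus the fact that $c$ is an order embedding. The only (harmless) difference is that you prove the easy inclusion directly from nc convexity of $\{(y,\beta): a(y)\leq I_m\otimes\beta\}$, whereas the paper gets it by observing that the real affine functions complexify into the complex intersecting family.
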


 \begin{proof}  For $x \in K_p$ 
 define $\tilde{F}(x)$ to be the intersection in the statement of the theorem.
 We have by \cite[Theorem 7.4.3]{DK} that 
 $$\overline{F_c}(x) = \cap_{m \in \bN} \cap_{a 
      \leq I_m \otimes F_c} \; \{ \alpha \in (M_n(M_p(\bC))_{\rm sa} : a(x) \leq I_m \otimes \alpha \},$$
      the intersection  taken over  all complex selfadjoint affine nc
functions $a \in M_m(M_n(A_{\bC}(K_c)))_{\rm sa}$  satisfying $a 
      \leq I_m \otimes F_c$. 
      Taking real parts and using the lemma,
      $$\bar{F}(x)  = {\rm Re} \; \overline{F_c}(x) = \cap_{m \in \bN} \cap_{a 
      \leq I_m \otimes F_c} \; \{ {\rm Re} \; \alpha \in (M_n(M_p(\bR))_{\rm sa} : a(x) \leq I_m \otimes \alpha \}, $$ the intersection  over all complex  affine nc
functions $a$ as above on $K_c$.  
      We show that the latter set is contained in 
$\tilde{F}(x)$. 
       Indeed if $a = a^* \in M_m(M_n(A(K)))_{\rm sa}$ 
      with $a 
      \leq I_m \otimes F$, then 
      $a_c$ is a complex selfadjoint affine nc
function   satisfying $a_c
      \leq I_m \otimes F_c$, using the fact that 
      $F \mapsto F_c$ is an order embedding.
       So the intersection in the definition of $\tilde{F}(x)$  is over fewer sets, so is bigger. 
             
      Conversely, suppose that $x \in K_p$ and
      $\alpha \in \tilde{F}(x)$, and $b$ is a complex selfadjoint affine nc
function in $M_m(M_n(A_{\bC}(K_c)))_{\rm sa}$  satisfying $b 
      \leq I_m \otimes F_c$.   Thus,
      $b(x) \leq I_m \otimes \beta$ for all $\beta \in F_c(x) = F(x)$.
       Applying the $c$ map, which is an order embedding, we have 
       $$c(b(x)) = c(b)(x) \leq I_2 \otimes (I_m \otimes \beta) = I_{2m} \otimes \beta .$$
       That is, $c(b(x))  \leq I_{2m} \otimes F(x)$.     We thus have  by hypothesis that
    $$c(b(x))     \leq I_{2m} \otimes \alpha = c(I_{m} \otimes \alpha). 
    $$
Since $c$ is an order embedding, $b(x) \leq I_{m} \otimes \alpha$.
Thus $\alpha$ is in the intersection above characterizing $\bar{F}(x)$, as desired.    \end{proof}
 
It seems that \cite[Corollary 7.4.4]{DK} works the same in the real case.  Finally (for Section 7.4), the noncommutative analogue of a result of Mokobodzki works with the same proof:

\begin{cor} Let $K$ be a real compact nc convex set
      and $F : K \to M_n(\cM(\bR))$ a real selfadjoint bounded multivalued nc function
      with
convex envelope $\bar{F}$.
      Then for $x \in K_p$, 
      $$\overline{F}(x) = \bigcap_{g \leq 1_m \otimes F} \{\alpha \in (M_n(M_p(\bR)))_{\rm sa}: 1_m \otimes \alpha \geq g(x)\},$$ where the intersection is taken over all $m$ and all selfadjoint convex nc
functions $g \in M_m(M_n(C(K)))_{\rm sa}$  satisfying $g 
      \leq I_m \otimes F$. 
    \end{cor}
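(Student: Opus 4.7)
The plan is to reduce to the complex case by complexification, closely following the template of the proof of Theorem \ref{dklong}. Denote by $R(x)$ the intersection on the right-hand side of the claimed identity.

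First, apply the complex noncommutative analogue of Mokobodzki's theorem to $F_c$ (the complex version of the statement being proved, available in \cite[Section 7.4]{DK}): for $x \in K_p$,
\[\overline{F_c}(x) = \bigcap_{h \leq I_m \otimes F_c} \{\beta \in M_n(M_p(\bC))_{\rm sa} : I_m \otimes \beta \geq h(x)\},\]
where the intersection is over all $m$ and all complex selfadjoint convex nc functions $h \in M_m(M_n(C(K_c)))_{\rm sa}$ with $h \leq I_m \otimes F_c$. By Lemma \ref{barre} we have $\overline{F}(x) = {\rm Re}\,\overline{F_c}(x)$, which will serve as the bridge between the real and complex envelopes.

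For the inclusion $\overline{F}(x) \subseteq R(x)$: given any real selfadjoint convex nc function $g \in M_m(M_n(C(K)))_{\rm sa}$ with $g \leq I_m \otimes F$, its complexification $g_c$ is complex selfadjoint convex by Proposition \ref{okfc0}, and satisfies $g_c \leq I_m \otimes F_c$ because complexification is an order embedding for both single- and multi-valued nc functions (Lemma \ref{bmext} and the lemmas leading up to it). Hence the complex intersection above is taken over a collection that includes all such $g_c$, so any $\alpha \in \overline{F}(x) \subseteq \overline{F_c}(x)$ satisfies $I_m \otimes \alpha \geq g_c(x) = g(x)$, placing $\alpha$ in $R(x)$.

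For the reverse inclusion $R(x) \subseteq \overline{F}(x)$: given $\alpha \in R(x)$ and a complex selfadjoint convex nc function $b \in M_m(M_n(C(K_c)))_{\rm sa}$ with $b \leq I_m \otimes F_c$, apply the $c$-embedding to obtain $c(b) \in M_{2m}(M_n(C(K)))_{\rm sa}$. An adaptation of Lemma \ref{Lemcfgi} (replacing the epigraph criterion (\ref{eqIsomEquiv}) by the multivalued criterion (\ref{eqnccEquiv})) shows that $c(b)$ is a real selfadjoint convex nc function, and $c(b) \leq I_{2m} \otimes F$ because $c$ is an order embedding and intertwines with the $I_m$ inflation. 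Since $\alpha$ is real, the hypothesis $\alpha \in R(x)$ yields $I_{2m} \otimes \alpha \geq c(b)(x) = c(b(x))$, and injectivity of the $c$-map for the operator order then gives $I_m \otimes \alpha \geq b(x)$. This holds for every complex $b$, so $\alpha \in \overline{F_c}(x)$; being real, $\alpha \in {\rm Re}\,\overline{F_c}(x) = \overline{F}(x)$ by Lemma \ref{barre}.

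The principal obstacle is the multivalued adaptation of Lemma \ref{Lemcfgi} invoked in the second inclusion, namely verifying that $c$ sends complex convex (multivalued) selfadjoint nc functions to real ones and preserves inequalities of the form $h \leq I_m \otimes F_c$. This is precisely what the Remark preceding the present corollary flags as being "probably true by an adjustment of the proof of Proposition \ref{okfc0}"; the adjustment amounts to running the one-line computation in Lemma \ref{Lemcfgi} for an arbitrary isometry $\alpha$ using (\ref{eqnccEquiv}) in place of (\ref{eqIsomEquiv}), together with the observation that $c$ commutes with the $(1_n \otimes \alpha)$-conjugation operation. Once this is in hand, the proof assembles exactly as above.
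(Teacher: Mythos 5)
Your argument is correct, but it takes a genuinely different route from the paper's. The paper disposes of this corollary with the single remark that the noncommutative Mokobodzki analogue ``works with the same proof,'' i.e.\ it re-runs Davidson--Kennedy's complex deduction verbatim in the real setting, with no complexification involved. You instead derive the real statement from the complex one by the same complexification template the paper uses for Theorem \ref{dklong} and Corollary \ref{cor753}: Lemma \ref{barre} (whose proof in fact gives $\overline{F_c}(x)=\bar F(x)$ for $x\in K$, so ${\rm Re}$ is not even needed), the order-embedding properties of $F\mapsto F_c$ and of the $c$-map, and Lemma \ref{Lemcfgi} to see that $c(b)$ is real convex with $c(b)\leq I_{2m}\otimes F$. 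Both routes are valid; yours has the advantage of being checkable entirely from the complexification lemmas already established and of being methodologically consistent with the neighboring results, while the paper's version is shorter but asks the reader to verify that the DK deduction has no hidden complex-specific steps. One correction to your closing paragraph: the ``principal obstacle'' you identify is not actually an obstacle. The comparison functions $b$ (resp.\ $g$) over which the Mokobodzki intersection runs are \emph{single-valued} convex nc functions in $M_m(M_n(C(K_c)))_{\rm sa}$, not multivalued ones, so the fact you need --- that $c(b)=c({\rm Re}\, b,{\rm Im}\, b)$ is real nc convex when $b$ is complex nc convex --- is exactly Lemma \ref{Lemcfgi} as already proved; the unverified multivalued version flagged in the Remark before Lemma \ref{barre}, and the criterion (\ref{eqnccEquiv}), are never invoked. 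Only $F$ itself is multivalued, and for $F$ you use only Lemma \ref{bmext} and Lemma \ref{barre}, both of which are available.
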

    
 Turning to \cite[Section 7.5]{DK}, this contains two remarkable results.   The first says that the `convex envelope encodes information about the set of representing maps of a point' \cite{DK}. Again the complex proof works. 

 \begin{thm} Let $K$ be a real compact nc convex set and 
 $f : K \to M_n(\cM(\bR))$ a selfadjoint lowersemicontinuous bounded nc function
with convex envelope $\bar{f}$. Then for $x \in K_m$, 
$$\overline{f}(x) = \bigcup_\mu [\mu(f),+\infty)$$
where the union is taken over all ucp $\mu : C(K) \to M_m(\bR)$ with barycenter $x$. 
    \end{thm}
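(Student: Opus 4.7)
The plan is to deduce the theorem from its complex counterpart by complexification, following the pattern used throughout the paper. By Proposition \ref{okfc0}, $f_c : K_c \to M_n(\cM(\bC))$ is a complex selfadjoint lowersemicontinuous bounded nc function, so the complex version of the present theorem (proved in \cite[Section 7.5]{DK} with the same proof as in the real case) applies and yields
$$\overline{f_c}(x+i0) = \bigcup_\nu \, [\nu(f_c),+\infty),$$
where the union is over complex ucp maps $\nu : C(K_c) \to M_m(\bC)$ with $\nu|_{A(K_c)} = x+i0$. The bridge back to the real side is Lemma \ref{barre}: $(\bar{f})_c = \overline{f_c}$, and in particular $\bar{f}(x) = {\rm Re}\,\overline{f_c}(x+i0)$.

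For the inclusion $\supseteq$, given a real ucp $\mu : C(K) \to M_m(\bR)$ with barycenter $x$, I would pass to its complexification $\mu_c : C(K_c) \to M_m(\bC)$, which is complex ucp with barycenter $x+i0$ and satisfies $\mu_c(f_c) = \mu(f)$ (identifying $f_c$ with $f+i0$). Any real $\alpha \geq \mu(f)$ then lies in $[\mu_c(f_c),+\infty) \subseteq \overline{f_c}(x+i0)$, and by Lemma \ref{barre} lies in $\bar{f}(x)$ since $\alpha$ is real.

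For the inclusion $\subseteq$, given $\alpha \in \bar{f}(x)$, Lemma \ref{barre} places $\alpha \in \overline{f_c}(x+i0)$; the complex case then produces a complex ucp $\nu : C(K_c) \to M_m(\bC)$ with $\nu|_{A(K_c)} = x+i0$ and $\nu(f_c) \leq \alpha$. I would set $\mu := {\rm Re}\,\nu|_{C(K)} : C(K) \to M_m(\bR)$ and verify: (i) $\mu$ is real ucp, using that the real part of a positive complex matrix is a positive real matrix, so ${\rm Re}$ commutes with amplifications and preserves complete positivity; (ii) $\mu|_{A(K)} = x$, since for $a \in A(K)$ we already have $\nu(a) = x(a) \in M_m(\bR)$; and (iii) $\mu(f) = {\rm Re}\,\nu^{(n)}(f) = {\rm Re}\,\nu(f_c) \leq {\rm Re}\,\alpha = \alpha$ because $f$ and $\alpha$ are real.

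The only slightly subtle step is producing the real representing map $\mu$ of $x$ from the complex representing map $\nu$ of $x+i0$ in the $\subseteq$ direction. The standard real-part trick handles this cleanly: the positive cone of $M_N(\bR)$ sits inside that of $M_N(\bC)$ and is preserved under taking real parts, which makes ${\rm Re}\,\nu|_{C(K)}$ a genuine real ucp map that retains both the barycenter and the required bound on $f$.
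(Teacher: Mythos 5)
Your argument is correct, but it is not the route the paper takes: for this particular theorem the paper simply asserts that Davidson and Kennedy's complex proof of \cite[Section 7.5]{DK} goes through verbatim over $\bR$ (``Again the complex proof works''), i.e.\ one reruns their separation/representing-map argument with real scalars. You instead deduce the real statement from the complex one by complexification, exactly in the style the paper uses for Theorem \ref{dklong} and Corollary \ref{cor753}. Your approach buys you freedom from re-auditing DK's proof for hidden complex-specific steps, at the price of leaning on the complexification machinery; the paper's approach avoids that machinery but requires the (unwritten) check that the DK argument is scalar-field-agnostic. Both are legitimate, and yours is arguably more in keeping with the paper's stated philosophy.

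Two compatibilities you should spell out, since they are doing real work. First, Lemma \ref{barre} is stated for \emph{multivalued} functions, whereas $f$ is single-valued; to invoke it you need that the multivalued complexification of $F(x)=[f(x),\infty)$ is exactly $z\mapsto [f_c(z),\infty)$, so that $\overline{F_c}=\overline{f_c}$ and hence $(\bar f)_c=\overline{f_c}$. This follows from the explicit formula $F_c(x+iy)=u^*F(c(x,y))u$ together with $f(c(x,y))=c(f_c(x+iy))$, and from it one also gets the containment $\bar f(x)\subseteq \overline{f_c}(x+i0)$ that you use (via $\gamma\oplus\gamma\in \bar f(x\oplus x)$ and compression by $u$), which is slightly more than the equality $\bar f(x)={\rm Re}\,\overline{f_c}(x+i0)$ alone gives. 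Second, $\mu(f)$ for $f\in M_n(B(K))$ is defined through the canonical extension of $\mu$ from $C(K)$ to $B(K)$; your identities $\mu_c(f_c)=\mu(f)$ and $({\rm Re}\,\nu|_{C(K)})(f)={\rm Re}\,\nu(f_c)$ require these extensions to be compatible with complexification and with ${\rm Re}$. This is routine and is implicitly used elsewhere in the paper (e.g.\ $\mu_c(g)$ versus $\mu(c(g))$ in Corollary \ref{cor753}), but it deserves a sentence.
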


      \begin{cor} \label{cor753} Let $F : K \to M_n(\cM(\bR))$ be a real  bounded nc  multivalued selfadjoint function on  a real  nc convex set $K$.
      For
every unital completely positive map $\mu : C(K) \to M_p(\bR)$ 
we have that $$\mu(\bar{F}) = \cap_{m \in \bN} \cap_{g 
      \leq I_m \otimes F} \; \{ \alpha \in (M_n(M_p(\bR))_{\rm sa} : \mu(g) \leq I_m \otimes \alpha \},$$ 
where the intersection is taken over all m and all convex nc functions $g \in M_m(M_n(C(K)))$ with 
$g       \leq I_m \otimes F_c$.    \end{cor}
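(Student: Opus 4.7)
\smallskip

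\noindent\textbf{Proof plan.} The plan is to mimic exactly the strategy used for Theorem \ref{dklong}, replacing evaluation at a point $x \in K_p$ with the ucp map $\mu$. First I would invoke the complex case of this corollary, namely \cite[Corollary 7.5.3]{DK} applied to the complex bounded nc multivalued selfadjoint function $F_c : K_c \to M_n(\cM(\bC))$ and to the ucp map $\mu_c : C(K_c) = C(K)_c \to M_p(\bC)$, obtaining
$$\mu_c(\overline{F_c}) = \bigcap_{m} \bigcap_{h \leq I_m \otimes F_c} \{\beta \in (M_n(M_p(\bC)))_{\rm sa} : \mu_c(h) \leq I_m \otimes \beta\},$$
the inner intersection over complex selfadjoint convex nc functions $h \in M_m(M_n(C(K_c)))$. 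Then I would apply Lemma \ref{barre} to identify $\overline{F_c} = (\bar{F})_c$, so that the left-hand side coincides with (the complexification of) $\mu(\bar{F})$ in the sense that, since $\bar{F}(x)$ consists of real selfadjoint matrices, $\mu(\bar F)$ sits inside $\mu_c((\bar F)_c)$ via $\alpha \mapsto \alpha$ with zero imaginary part.

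Next, I would run the two inclusions between the displayed real formula and the displayed complex formula exactly as in the proof of Theorem \ref{dklong}. For ``$\supseteq$'', given a real convex $g \leq I_m \otimes F$ as in the statement of the corollary, the function $g_c$ is a complex convex nc function with $g_c \leq I_m \otimes F_c$ (using that $F \mapsto F_c$ is an order embedding, Lemma \ref{bmext}, together with Lemma \ref{Lemcfgi} for the convexity), so any $\alpha$ in the right-hand side of the real formula satisfies $\mu_c(g_c) = \mu(g) \leq I_m \otimes \alpha$; after restricting to real parts (Lemma \ref{bmend}, Lemma \ref{bmext2}) this yields membership in $\mu(\bar F)$. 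For ``$\subseteq$'', given a complex convex nc $h \leq I_m \otimes F_c$ in the complex formula, apply the $c$-map (which is an order embedding, and preserves convexity of nc functions by Lemma \ref{Lemcfgi}) to obtain $c(h)$, a real selfadjoint convex nc function with $c(h) \leq I_{2m} \otimes F$; the inequality $\mu(c(h)) \leq I_{2m} \otimes \alpha = c(I_m \otimes \alpha)$ forces $\mu_c(h) \leq I_m \otimes \alpha$ as required, where as usual we view $\alpha \in M_n(M_p(\bR))_{\rm sa} \subseteq M_n(M_p(\bC))_{\rm sa}$.

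The main technical obstacle is bookkeeping. First, one needs to verify carefully that the operation ``take $\mu_c$'' commutes appropriately with complexification of multivalued functions, in the sense that $\mu_c((\bar F)_c)$ reduces on real parts to $\mu(\bar F)$; this is the analogue at the level of $\mu$ of the identity $\overline{F_c}(x) = \bar F(x)$ (no imaginary part) from Lemma \ref{barre}, and essentially follows by the same argument using that $(\bar F)_c = \overline{F_c}$ forces the imaginary component of every element to vanish on real inputs. Second, one must be careful with the collection of nc convex functions $h$ that appear in the complex formula: in the ``$\subseteq$'' step the function $c(h)$ lives in $M_{2m}(M_n(C(K)))$ rather than $M_m(M_n(C(K)))$, so the $m$ in the real formula effectively ranges freely, which is exactly why the intersection is taken over all $m \in \bN$ (or all $m \leq \kappa$). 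Once these two points are handled, the two displayed inclusions close up and the corollary follows.
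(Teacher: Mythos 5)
Your proposal follows the paper's proof essentially verbatim: invoke the complex result \cite[Theorem 7.5.3]{DK} for $F_c$ and $\mu_c$, take real parts via Lemma \ref{barre}, and then compare the real and complex intersections using $g \mapsto g_c$ in one direction and $h \mapsto c(h)$ (with $m$ doubling to $2m$, and $c$ an order embedding) in the other, exactly as in Theorem \ref{dklong}. The only slip is that your two inclusion paragraphs have their conclusions attached to the wrong directions --- the $g \mapsto g_c$ observation shows that an element of $\mu(\bar{F})$ (an intersection over the larger complex family) lies in the real intersection, while the $c(h)$ argument is what places an element of the real intersection into $\mu(\bar{F})$ --- but both required arguments are present and correct, so this is only a labelling issue.
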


 \begin{proof}   Define $\tilde{\mu}$ to be the intersection in the statement of the theorem.
 We have by \cite[Theorem 7.5.3]{DK} that 
 $$\mu_c(\overline{F_c}) = \cap_{g 
      \leq I_m \otimes F_c} \; \{ \alpha \in (M_n(M_p(\bC))_{\rm sa} : \mu_c(g) \leq I_m \otimes \alpha \},$$
      the intersection  taken over  all complex selfadjoint convex nc
functions $g \in M_m(M_n(C_{\bC}(K_c)))_{\rm sa}$  satisfying $g
      \leq I_m \otimes F_c$. 
      Taking real parts and using Lemma \ref{barre}, we have
     $$\mu(\bar{F}) = \cap_{g 
      \leq I_m \otimes F_c} \; \{ {\rm Re} \, \alpha \in (M_n(M_p(\bR))_{\rm sa} : \mu_c(g) \leq I_m \otimes \alpha \},$$
      the intersection  taken over the same collection of complex selfadjoint convex nc
functions $g$ as above.  As in the proof of Theorem \ref{dklong} we have $\mu(\bar{F}) \leq \tilde{\mu}$, and on the other hand 
for $\alpha \in  \tilde{\mu}$ and for a complex selfadjoint convex nc
function $g \leq I_m \otimes F_c$ as above, we have 
$$c(g(x)) = c(g)(x) \leq (I_{2m} \otimes F)(x), \qquad x \in K.$$
We thus have by definition of $\tilde{\mu}$  that
$$c(\mu_c(g)) = \mu(c(g)) \leq I_{2m} \otimes \alpha = c(I_{m} \otimes \alpha).$$
As in Theorem \ref{dklong} this means that 
$\mu_c(g) \leq I_{m} \otimes \alpha$, so that
$\alpha \in \mu(\bar{F})$ as desired. 
\end{proof}  

   Just as in \cite{DK}, and with the same proof, this  implies a Jensen inequality:
   
\begin{thm} {\rm (Noncommutative Jensen inequality)}\ Let $K$ be a real compact nc convex set and $f \in B(K)$ a selfadjoint lowersemicontinuous convex nc function. For any completely positive map
$\mu : C(K) \to M_n(\bR)$  with barycenter $x
\in K_n$, we have $f(x) \leq \mu(f).$ 
    \end{thm}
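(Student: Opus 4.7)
The plan is to derive the inequality as an immediate consequence of the theorem immediately preceding the statement, which represents the convex envelope of a selfadjoint lsc bounded nc function as a union of shifted cones indexed by representing maps. Almost all the work has therefore already been done; the nc Jensen inequality should be a formal consequence once we observe that a convex lsc function coincides with its own convex envelope.

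First, I would promote $f$ to its epigraphical multivalued form $F(y) = [f(y), +\infty)$ for $y \in K$. Since $f$ is bounded, selfadjoint, nc convex and lsc, the graph of $F$ equals $\Epi(f)$, which is nc convex (by the very definition of nc convexity of $f$) and closed (by lsc of $f$); moreover $F$ is bounded as a multivalued function by any bound for $f$, since $f(y) \in F(y)$ is the required small element. Hence $F$ is itself a convex, lsc, bounded, selfadjoint multivalued nc function, and
\[ {\rm Graph}(\bar F) = \overline{\ncconv}({\rm Graph}(F)) = {\rm Graph}(F), \]
so $\bar F = F$. In particular $\bar f(x) = [f(x), +\infty)$ at the barycenter point $x \in K_n$.

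Next, the preceding theorem applied to $f$ gives
\[ \bar f(x) \;=\; \bigcup_{\nu} \, [\nu(f), +\infty), \]
the union being over all ucp maps $\nu : C(K) \to M_n(\bR)$ with barycenter $x$. Specializing to our specific $\mu$ yields $[\mu(f), +\infty) \subseteq \bar f(x) = [f(x), +\infty)$, which is exactly the desired inequality $f(x) \leq \mu(f)$ in $M_n(\bR)$. The only potential obstacle is bookkeeping in passing between the single-valued $f$ and its multivalued epigraphical version $F$, but this is immediate from the definitions; no new analytic input is needed beyond the preceding theorem.
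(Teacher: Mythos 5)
Your proposal is correct and is precisely the argument the paper invokes (it simply says the result follows ``with the same proof'' as in Davidson--Kennedy): since $f$ is convex and lsc its epigraphical multivalued version satisfies $\bar F = F$, so $\bar f(x) = [f(x),+\infty)$, and the preceding representation theorem $\bar f(x) = \bigcup_\nu [\nu(f),+\infty)$ then forces $\mu(f) \geq f(x)$. No gaps; the bookkeeping between $f$ and $F$ is exactly as you describe.
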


\end{section}

 \begin{section}{Appendix by Travis 
 Russell: A more detailed proof of Theorem \ref{DK643}.} 
        \label{trav}

As 
said 
(together with a few more specifics) below Theorem \ref{DK643},  we will use below that  
various properties of irreducible representations and corresponding GNS representations from   \cite{Dix}, hold in the real case.  
    We amplify Davidson and Kennedy's argument, in part to see that there are no  hidden real vs.\ complex issues.  We will also use the following modified version of \cite[Proposition 2.2.9]{DK}: Let $K$ be a closed nc-convex set over a dual operator space $E$. Suppose there is a net of contractions $\{\alpha_i \in M_{n,n_i}\}$ and a net $\{x_i \in K_{n_i}\}$ such that $\lim \alpha_i \alpha_i^* = I_n$ and such that $\lim \alpha_i x_i \alpha_i^* = x \in M_n(E)$. Then $x \in K_n$.
To see this, let $\alpha_i = U_i P_i$ be a polar decomposition for $\alpha_i$, where $U_i \in M_{n,n_i}$ is a partial isometry and $P_i \in M_{n_i}$ is a positive operator. Let $\beta_i = U_i (I_{n_i} - P_i^2)^{1/2} \in M_{n,n_i}$. Then $\alpha_i \alpha_i^* + \beta_i \beta_i^* = U_i U_i^*$, which is a projection in $M_n$. Let $m_i \leq n$ be a cardinal and $V_i \in M_{n,m_i}$ be a partial isometry such that $V_i V_i^* + U_i U_i^* = I_n$. Then because $\alpha_i \alpha_i^* \to I_n$ in $M_n$, we have $\beta_i \beta_i^* \to 0$ and $V_i V_i^* \to 0$ in $M_n$. Fix $y \in K_1$. Since $K$ is closed under nc-convex  combinations, $\alpha_i x_i \alpha_i^* + \beta_i (y \otimes I_{n_i}) \beta_i^* + V_i (y \otimes I_{m_i}) V_i^* \in K_n$ for every $i$. Since e.g.\ $\beta_i  \beta^*_i \to 0$ SOT, we have $$\alpha_i x_i \alpha_i^* + \beta_i (y \otimes I_{n_i}) \beta_i^* + V_i (y \otimes I_{m_i}) V_i^* \to x$$ in $M_n(E)$.  So  $x \in K_n$ since $K$ is closed. 
    
    Let $Z = \overline{\text{ncconv}} \{ \delta_x : x \in X\} \subseteq L$, where $L$ is the nc state space of $C(K)$. Let $y \in K$. Then $y$ is approximated by an nc convex combination $\sum \alpha_i^* x_i \alpha_i$ with $x_i \in X$ and 
    $\sum \alpha_i^*  \alpha_i = I$. Recall that the barycenter map from $L$ to $K$ is given by restriction of an nc state on $C(K)$ to $A(K)$, and this map is nc-affine and continuous. Now the barycenter of $\sum \alpha_i^* \delta_{x_i} \alpha_i$ is $\sum \alpha_i^* x_i \alpha_i$ since the barycenter map is nc-affine. Since $Z$ is compact and the barycenter map is continuous, there exists $\mu \in Z$ such that $y = \mu |_{A(K)}$ (i.e.\  $y$ is the barycenter of $\mu$).

    Let $y \in \partial K$ and suppose $y$ is the barycenter of $\mu \in Z$. By Theorem 6.1.9., $\delta_y: C(K) \to B(H)$ is irreducible and the unique extension of $y$ to $C(K)$, hence $\mu = \delta_y$. Let $n \in \mathbb{N}$, and fix an $n$-dimensional subspace $H' \subseteq H$ and an orthonormal basis $\{e_1, \dots, e_n\}$ for $H'$. Consider $\pi = \delta_y \otimes id_n: M_n(C(K)) \to B(H \otimes \mathbb{R}^n)$. This map is irreducible.
    
    Define $\psi: M_n(C(K)) \to \mathbb{R}$ by \[ \psi( (a_{ij})) = \frac{1}{n} \sum_{i,j} \langle \delta_y(a_{ij}) e_j, e_i \rangle_{H'} = \langle (\pi(a)_{ij}) (\oplus_i \frac{1}{\sqrt{n}} e_i), (\oplus_i \frac{1}{\sqrt{n}} e_i ) \rangle_{H \otimes \mathbb{R}^n}. \]
    Since $h = (1/\sqrt{n}) (\oplus_i e_i)$ is a unit vector, and since $\pi = \delta_y \otimes id_n$ is irreducible, $h$ is a cyclic vector (e.g. see Theorem 5.1.5 (2) of \cite{Mu}). It follows that the GNS representation $\pi_{\psi}: M_n(C(K)) \to B(H_{\psi})$ corresponding to $\psi$ is unitarily equivalent to the (irreducible) representation $\delta_y \otimes id_n$ (e.g. see Theorem 5.1.4 of \cite{Mu}). Consequently  $\psi$ factors through $\delta_y \otimes id_n$ and $\psi$ is pure (since its corresponding representation is irreducible).

    Since $\delta_y \in Z$ and since $Z$ is the closed nc convex hull of $\{ \delta_x : x \in X\}$, if $a \in \cap_{x \in X} \ker(\delta_x)$, then $a \in \ker(\delta_y)$. So $\cap_{x \in X} \ker(\delta_x) \subseteq \ker(\delta_y)$. Consequently, $\cap_{x \in X} \ker(\delta_x \otimes id_n) \subseteq \ker(\delta_y \otimes id_n)$. Because the state $\psi$ factors through $\delta_y \otimes id_n$ and hence vanishes on $\ker(\delta_y \otimes id_n)$, the real case of Dixmier's     Proposition 3.4.2(ii) implies that $\psi$ is a weak-$*$ limit of pure states $\psi_{x_i}: M_n(C(K)) \to \mathbb{C}$ factoring through representations $\delta_{x_i} \otimes id_n$, i.e. $\psi_{x_i}(z) = \langle (\delta_{x_i} \otimes id)(z) h_i, h_i \rangle$ for some unit vector $h_i \in H_i^n$, where $\delta_{x_i}: C(K) \to B(H_i)$. 

    Define $\rho_{x_i}: C(K) \to M_n$ by $(\rho_{x_i}(x))_{k,l} := n\psi_{x_i}(x \otimes E_{k,l})$ and $\rho_y: C(K) \to M_n$ by $(\rho_y(z))_{kl} = n \psi(z \otimes E_{kl})$. Now $$\langle (\delta_y \otimes id) (z) e_l, e_k \rangle = n\psi(z \otimes E_{kl}) = \rho_y(z)_{kl}$$ for all $k,l$, so $\rho_y$ is the compression of $\delta_y$ to $H'$ (identifying $M_n = B(H')$ and $\mathbb{R}^n = H'$). Also,  $\rho_{x_i} \to \rho_y$, since $\psi_{x_i}(z \otimes E_{kl}) \to \psi(z \otimes e_{kl})$ for all $z \in C(K)$. 

    Write each $h_i \in H_i^n$ as $h_i = \oplus_{k=1}^n h_i^k$. Because $$\psi_{x_i}(z \otimes E_{kl}) = \langle (\delta_{x_i} \otimes id_n) (z \otimes E_{kl}) h_i, h_i \rangle = \langle \delta_{x_i} h_l, h_k \rangle,$$ we have $\rho_{x_i} = V_{x_i}^* \delta_{x_i} V_{x_i}$, where $V_{x_i}: H' \to H_i$ is defined by $V_{x_i} e_k = n^{1/2} h_i^k$  for all $k$. Because $\psi_{x_i}(I \otimes E_{kl}) \to \psi(I \otimes E_{kl}) = \delta_{kl}$, $\rho_{x_i}(I) \to I_{H'}$. Hence $V_{x_i}^* V_{x_i} \to I_{H'}$. Since $\{V_{x_i}^* V_{x_i}\}$ is a net on the finite-dimensional Hilbert space $H'$, $\|V_{x_i}^* V_{x_i}\| \to 1$ and therefore $\|V_{x_i}\| \to 1$. Thus, if we let $\alpha = \|V_{x_i}\|^{-1} V_{x_i}^*$, then $\alpha_{x_i} \delta_{x_i} \alpha_{x_i}^* = \|V_{x_i}\|^{-2} \rho_{x_i} \to \rho_y$ and $\alpha_{x_i} \alpha_{x_i}^* \to I_n$. By continuity of the barycenter map again, $\lim \alpha_{x_i} x_i \alpha_{x_i}^* \in X$. Therefore $X$ contains the image of every finite-dimensional compression of $y$.

    To finish the proof, assume $y \in K_m$ for some infinite cardinal $m$. By considering the finite-dimensional compressions of $y$, there exists a net of isometries $\beta_i \in M_{m,d_i}$ (with $d_i$ finite) and $y_i \in X_{d_i}$ (by the above arguments) such that $\beta_i y_i \beta_i^* \to y$ and $\beta_i \beta_i^* \to I_m$. Here we can take $y_i = \beta_i^* y \beta_i$. So $\beta_i \beta_i^* \to I_n$ weak*. By the closure of $X$ under limits of this form, we see that $y \in X$. 
\end{section}

\end{section}

   \subsection*{Acknowledgements.}   
  We acknowledge support from NSF Grant DMS-2154903.  We thank Matt Kennedy and Scott McCullough for very helpful discussions and historical  comments.  We also deeply thank Travis Russell for several discussions, and for the Appendix.

\end{document}